\definecolor{green}{RGB}{0,127,0}
\definecolor{red}{RGB}{105,89,205}
\theoremstyle{plain}
\newtheorem{theorem}{Theorem}[section]
\newtheorem*{thm}{Theorem}
\newtheorem{fact}[theorem]{Fact}
\newtheorem{lemma}[theorem]{Lemma}
\newtheorem{corollary}[theorem]{Corollary}
\newtheorem*{corr}{Corollary}
\newtheorem{proposition}[theorem]{Proposition}
\newtheorem{conjecture}[theorem]{Conjecture}
\newtheorem*{conj}{Conjecture}
\newtheorem{ass}[theorem]{Assumption}
\theoremstyle{definition}
\newtheorem{definition}[theorem]{Definition}
\newtheorem{example}[theorem]{Example}
\newtheorem*{ex}{Example}
\newtheorem{question}[theorem]{Question}
\newtheorem{remark}[theorem]{Remark}
\theoremstyle{remark}
\newtheorem{claim}[theorem]{Claim}
\numberwithin{figure}{section}
\numberwithin{equation}{section}
\DeclareMathOperator{\alt}{Alt}
\DeclareMathOperator{\supp}{supp}
\DeclareMathOperator{\GL}{GL}
\DeclareMathOperator{\SL}{SL}
\DeclareMathOperator{\PSL}{PSL}
\DeclareMathOperator{\rk}{rk}
\DeclareMathOperator{\lcm}{lcm}
\newcommand{\IET}{\mathcal{IET}}
\newcommand{\CC}{{\mathbb C}}
\newcommand{\PPsl}{{\mathcal{PSL}}}
\newcommand{\Pp}{{\mathbb P}}
\newcommand{\PS}{{\mathcal{SL}}}
\newcommand{\N}{{\mathbb N}}
\newcommand{\Z}{{\mathbb Z}}
\newcommand{\R}{{\mathbb R}}
\newcommand{\F}{{\mathbb F}}
\newcommand{\A}{{\mathcal A}}
\newcommand{\E}{{\mathcal E}}
\newcommand{\G}{{\mathcal G}}
\newcommand{\X}{{\mathcal X}}
\newcommand{\B}{{\mathcal B}}
\newcommand{\U}{{\mathcal U}}
\renewcommand{\S}{{\mathcal S}}
\renewcommand{\E}{{\mathcal E}}
\renewcommand{\H}{{\mathcal H}}
\renewcommand{\ll}{{\ell}}
\begin{document}

\title{New compactness theorem for metric ultraproducts and simplicity}

\author{Jakub Gismatullin}

\address{Instytut Matematyczny Uniwersytetu Wroc{\l}awskiego, pl. Grunwaldzki 2/4, 50-384 Wroc{\l}aw, Poland \& Instytut Matematyczny Polskiej Akademii Nauk, ul. {\'S}niadeckich 8, 00-656 Warszawa, Poland}

\email{jakub.gismatullin@uwr.edu.pl}

\thanks{\noindent The first author is supported by the National Science Centre, Poland NCN grants no.  2014/13/D/ST1/03491 and 2017/27/B/ST1/01467.}

\author{Krzysztof Majcher}

\address{Department of Computer Science, Faculty of Fundamental Problems of Technology, Wroc{\l}aw University of Science and Technology, Wybrze{\.z}e Wyspianskiego 27, 50-370 Wroc{\l}aw, Poland}

\email{k.majcher@pwr.wroc.pl}

\author{Martin Ziegler}

\address{Mathematisches Institut, Albert-Ludwigs-Universit{\"a}t Freiburg, D-79104 Freiburg, Germany}

\email{ziegler@uni-freiburg.de}

\date{\today}

\keywords{infinite simple groups, metric ultraproduct, bi-invariant norm on group, interval exchange transformations, infinite symmetric groups, topological simplicity, bounded simplicity, uniform simplicity, linear groups}
\subjclass[2010]{20E32, 20E45, 03C20, 12L10.}

\begin{abstract}
We give a new compactness theorem for any metric ultraproducts of family of metric groups.

As an application we characterize simplicity of metric ultraproducts of groups and give a couple of examples of new simple groups which are metric ultraproducts of finite and infinite symmetric groups, linear groups and interval exchange transformations group.
\end{abstract}

\maketitle

\tableofcontents


\section*{Introduction}

The ultraproduct construction is playing an important role in model theory, topology and algebra. A more general construction is that of \emph{metric ultraproduct}, when the objects are equipped with some king of invariant metrics. This paper is about metric ultraproducts of groups, that is groups equipped with conjugacy invariant norms. The importance of metric ultraproducts to group theory became apparent recently, especially in the case of sofic groups \cite{pestov1}. Metric ultraproducts are currently intensively studied. The main open problem is the following conjecture.

\begin{conj} Every group is sofic, that is
every finitely generated group can be homomorphically embedded into a \emph{metric} ultraproduct $\S^*_{\text{met}}$ of permutation groups $\S=(S_n,\|\cdot\|_H)$, equipped with the normalised Hamming norm $\|\sigma\|_H=\frac{1}{n}\|\supp(\sigma)\|$.
\end{conj}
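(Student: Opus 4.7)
The final statement is the famous soficity conjecture, an open problem going back to Gromov's work on surjunctive groups and Weiss's formulation, so any honest proposal is necessarily a program rather than a proof. Nonetheless, the plan would be to reduce the conjecture as far as possible to a hard combinatorial core and attack that core.

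First, I would dispose of the classes where soficity is already understood. Amenable groups embed into $\S^*_{\text{met}}$ via F\o lner approximations, translated into partial permutation actions on finite sets that become exact in the Hamming limit. Residually finite groups embed through their finite quotients into finite symmetric groups directly, with normalised Hamming distance tending to $1$ on non-trivial elements. This extends to LEF and LEA groups by the same construction applied to almost-finite presentations. Next, I would invoke the closure properties of the class of sofic groups: closure under subgroups, direct and inverse products, direct limits, free products (Elek--Szab\'o, Collins--Dykema, Paunescu), amalgamated products over amenable subgroups, and extensions of sofic by amenable. This reduces the conjecture to finitely generated groups that escape all these constructions, such as certain Burnside groups $B(m,n)$ for large exponent, Higman's group, groups built by random small cancellation, or Tarski monsters.

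For such a candidate group $G$ with marked generators $S$, the approach would be to construct, for every finite $F\subset G$ and every $\varepsilon>0$, a map $\varphi_{F,\varepsilon}\colon S \to S_{n(F,\varepsilon)}$ whose multiplicative defect on $F$ is at most $\varepsilon$ in the Hamming norm and which separates the non-identity elements of $F$ by Hamming distance bounded below. Once such almost-actions exist, the new compactness theorem announced earlier in the paper packages them into a genuine homomorphism $G\hookrightarrow\S^*_{\text{met}}$ along a suitable ultrafilter. The technical heart would be to realise these almost-actions using some combinatorial device intrinsic to $G$: labelled F\o lner-like patterns in the Cayley graph, random permutation models analogous to Bowen's sofic entropy constructions, or graphings from a measured equivalence relation built from $G$.

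The hard part, and the reason the conjecture has resisted for two decades, is precisely this step: there is no known general mechanism that produces Hamming approximations for an \emph{arbitrary} finitely generated group, and equally no known obstruction that a sofic group must satisfy beyond the trivial ones. A successful plan must either (a) exhibit a universal probabilistic or combinatorial construction of almost-actions on finite sets from a presentation, presumably along the lines of random lifts of Schreier graphs together with a concentration argument forcing the relations to hold up to small Hamming error, or (b) find a new invariant, obstructing soficity, and verify it vanishes on all groups. I would honestly expect neither route to succeed in a short argument; the contribution of this paper is to provide the compactness tool that would convert such combinatorial data into an embedding, leaving the combinatorial input as the genuine obstacle.
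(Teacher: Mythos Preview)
The statement is a \emph{conjecture}, not a theorem, and the paper makes no attempt to prove it; it is stated in the introduction purely as background motivation for studying metric ultraproducts. You correctly recognise this and present a program rather than a proof, which is the only honest response.

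One minor correction to your program: the step where you say the paper's new compactness theorem would ``package'' the $(F,\varepsilon)$-almost-actions into an embedding $G\hookrightarrow\S^*_{\text{met}}$ overstates the role of that theorem. The passage from a system of $(F,\varepsilon)$-approximations to an embedding into the metric ultraproduct is entirely standard and follows directly from the definition of $\S^*_{\text{met}}$ and a diagonal choice along the ultrafilter; it does not require Theorem~\ref{thm:sim_b} or~\ref{thm:sim2}. The compactness theorem of this paper concerns when a metric ultraproduct is covered by countably many metrically internal sets, and its applications are to simplicity, perfectness, and torsion of $\G^*_{\text{met}}$, not to constructing embeddings into it. So the paper's contribution is orthogonal to the soficity conjecture rather than a tool toward it.
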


A \emph{non-sofic group} is a group that cannot be homomorphically embedded into $\S^*_{\text{met}}$. No example of a non-sofic group is known. However, some well known groups (e.g. Higman group) are not known to be sofic. We believe that our new compactness theorem \ref{thm:sim_b} better explain the structure of metric ultraproducts and give a new method to study them.

Let us briefly explain the plan. Our first aim is to prove general compactness theorem \ref{thm:sim2} valid for all metric ultraproducts, also for families of metric groups with \emph{unbounded norms}. That is why we work with \emph{finitary element-subgroup} $\G^*_{\text{met, fin}}$ (Definition \ref{def:finn}), which is a normal subgroup of metric ultraproduct $\G^*_{\text{met, fin}}$. 
We give several applications of Theorems \ref{thm:sim_b}, \ref{thm:sim2}. We characterize when $\G^*_{\text{met, fin}}$ is a simple group (Theorem \ref{thm:SimpFam}, Section \ref{sec:sim}). Our condition for simplicity involves covering of arbitrary big balls by neighbourhoods of conjugacy classes. We also study bounded and uniform simplicity in Section \ref{sec:ub}, and topological simplicity in Section \ref{sec:top}. We provide a couple of examples, where our characterizations give new examples of simple groups.
We also study bounded generation and perfectness of $\G^*_{\text{met}}$ in Sections \ref{sec:bg}, \ref{sec:perf} and metric version of being torsion group in Section \ref{sec:tors}. Section \ref{sec:tree} is about set theoretic consideration on some well-founded trees related with metric internal coverings (Theorem \ref{thm:tree}).

Let us give more details and explain the notion of metric ultraproduct. Suppose $\G = (G_n,\|\cdot\|_n)_{n\in \N}$ is a family of metric groups, so each $G_n$ is equipped with a conjugacy invariant norm $\|\cdot\|_n$ (see Definition \ref{def:met}). A \emph{metric ultraproduct} of $\G$ is denoted by $\G^*_{\text{met}}$ (Definition \ref{def:inff}).  $\G^*_{\text{met}}$ is again equipped with a norm and metric:
\[\|\cdot\|\colon \G^*_{\text{met}}\to [0,\infty]\text{ defined as }\left\|(g_n)/\E\right\| = \lim_{n\to \U} \left\|g_n\right\|_n.\]

Elements of a finite norm in $\G^*_{\text{met}}$ form a normal subgroup of $\G^*_{\text{met}}$, which we denote by $\G^*_{\text{met, fin}}$. Sometimes we work under the following assumption: \[\sup_{n\in\N}\|\cdot\|_n <\infty,\] which we name as \emph{bounded case}. Under this assumption $\G^*_{\text{met,fin}}=\G^*_{\text{met}}$. The core notion we are going to use is that of \emph{metrically internal subset} of  $\G^*_{\text{met}}$: $X\subseteq \G^*_{\text{met}}$ is \emph{metrically internal} if there is a collection of sets $\{X_n\}_{n\in\N}$, $X_n\subseteq G_n$ such that
\[X = \frac{X_0\times X_1\times X_2\times\cdots}{\E},\] where $\E$ is the infinitesimal subgroup (\ref{inf}). We state below a new compactness theorem for $\G^*_{\text{met}}$ in the bounded case. The general version is in Theorem \ref{thm:sim2} and Remark \ref{rem:gen}. By $\B(\varepsilon)$ in a metric group $(G,\|\cdot\|)$ we mean a ball around $e$ of radius $\varepsilon$.

\begin{thm}\ref{thm:sim_b} (bounded case)
The following conditions are equivalent.
\begin{enumerate}
\item $\G^*_{\text{met}} = \bigcup_{m\in\N} X_m$
\item For any countable infinite sequence of positive reals $(\varepsilon_0,\varepsilon_1,\ldots)\subset\R_{>0}$ there is $N\in\N$ such that
\begin{equation*}
\G^*_{\text{met}} =  X_{0} \B(\varepsilon_0) \cup \ldots \cup X_{N} \B(\varepsilon_N),\text{ holds in }\G^*_{\text{met}}.
\end{equation*}
Which is equivalent with the clause: for $\U$-almost all $n\in\N$
\begin{equation*}
G_n =  X_{0,n} \B(\varepsilon_0) \cup \ldots \cup X_{N,n} \B(\varepsilon_N),\text{ holds in }G_n.
\end{equation*}
\end{enumerate}
\end{thm}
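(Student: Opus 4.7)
The plan is to prove the two implications separately. The direction (2)$\Rightarrow$(1) reduces to the closedness of metrically internal sets in the induced pseudometric, while (1)$\Rightarrow$(2) is a diagonal argument amounting to countable saturation of the ultraproduct, together with the \L{}o\'s-style equivalence between the ultraproduct-level covering and its $\U$-almost all componentwise version already asserted inside (2).

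For (2)$\Rightarrow$(1), I argue by contrapositive. Suppose some $g\in\G^*_{\text{met}}$ lies outside every $X_m$. The key observation is that each metrically internal set $X_m$ is closed: if $d(g,X_m)=0$, fix for each $k$ a representative $(y_n^{(k)})\in\prod_n X_{m,n}$ with $\lim_\U\|g_n(y_n^{(k)})^{-1}\|_n<1/k$; then a standard diagonal selection on the $\U$-large sets $A_k=\{n:\|g_n(y_n^{(k)})^{-1}\|_n<1/k\}$ yields a single $(y_n)\in\prod_n X_{m,n}$ with $\lim_\U\|g_n y_n^{-1}\|_n=0$, so $g\in X_m$, a contradiction. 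Hence $d(g,X_m)>0$ for all $m$, and the sequence $\varepsilon_m=d(g,X_m)/2$ witnesses the failure of (2) for this $g$.

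For (1)$\Rightarrow$(2), again by contrapositive, suppose a sequence $(\varepsilon_m)\subset\R_{>0}$ is given for which no finite union $X_0\B(\varepsilon_0)\cup\cdots\cup X_N\B(\varepsilon_N)$ equals $\G^*_{\text{met}}$. By the componentwise equivalence within (2), the sets
\[
A_N=\bigl\{n\in\N : G_n\neq X_{0,n}\B(\varepsilon_0)\cup\cdots\cup X_{N,n}\B(\varepsilon_N)\bigr\}
\]
are decreasing and all lie in $\U$. Define $f(n)=\sup\{N:n\in A_N\}$ (allowing $\infty$) and pick $h_n\in G_n$ witnessing failure at level $f(n)$, so that $d(h_n,X_{i,n})\ge\varepsilon_i$ for every $i\le f(n)$. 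The bounded-case hypothesis $\sup_n\|\cdot\|_n<\infty$ ensures that $h=(h_n)/\E\in\G^*_{\text{met}}$ is well defined regardless of the choice of $h_n$; and for each fixed $i$ the set $\{n:f(n)\ge i\}\supseteq A_i$ lies in $\U$, whence $d(h,X_i)\ge\varepsilon_i>0$ and $h\notin X_i$, contradicting (1).

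The main obstacle is the \L{}o\'s-style equivalence asserted inside (2) between the ultraproduct covering and its $\U$-almost all componentwise version. The delicate direction uses a finite pigeonhole over $\U$ to collapse coordinatewise choices of index $i_n\le N$ to a single fixed $i$, and an $\varepsilon_i-\delta$ slack to reconcile the strict inequality $\lim_\U\|\cdot\|_n<\varepsilon_i$ with genuine membership in $X_i\B(\varepsilon_i)$; the reverse direction invokes the bounded hypothesis to lift a coordinatewise counterexample to an actual ultraproduct element. Once this equivalence is in hand, the rest of the theorem is just the closedness of metrically internal sets together with the diagonal selection above, which is the concrete face of countable saturation of the metric ultraproduct.
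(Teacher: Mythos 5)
Your proposal follows essentially the same route as the paper: the direction from the $\varepsilon$-thickened covering back to (1) rests on closedness of metrically internal sets, which you prove by exactly the $\U$-diagonal selection that constitutes the paper's Lemma \ref{lem:inter}, and your (1)$\Rightarrow$(2) is the paper's diagonal counterexample construction in the proof of Theorem \ref{thm:sim2}; like the paper, you treat the transfer between the ultraproduct-level covering and its coordinatewise version as a routine {\L}o\'s-type step.

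Two points in your write-up need patching. First, in (1)$\Rightarrow$(2) you allow $f(n)=\infty$ and then ``pick $h_n$ witnessing failure at level $f(n)$''; when $f(n)=\infty$ no single witness need exist, since each finite union may omit some element of $G_n$ even though the countable union covers $G_n$. The paper forces the maximal level to be finite by replacing the sets by $A_0\cap\cdots\cap A_N\setminus\{0,\ldots,N\}$, which still lie in $\U$ but have empty intersection; alternatively, at such coordinates take the witness for level $n$. Second, the deferred equivalence between the coordinatewise and the ultraproduct covering is not literally true for a fixed tuple $(\varepsilon_0,\ldots,\varepsilon_N)$ with open balls: coordinatewise factorizations $g_n=x_nb_n$ with $\|b_n\|_n<\varepsilon_i$ only give $\|\bar b\|\le\varepsilon_i$ after the $\U$-limit. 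For instance, with $G_n=\Z/2\Z$, $\|1\|_n=1-\tfrac1n$, $X_{0,n}=\{e\}$ and $\varepsilon_0=1$, every $G_n$ equals $X_{0,n}\B(\varepsilon_0)$ but $\G^*_{\text{met}}\neq X_0\B(\varepsilon_0)$. Your ``$\varepsilon$--$\delta$ slack'' remark points in the right direction, but the clean repair is to apply the coordinatewise covering to the halved sequence $(\varepsilon_m/2)$ (equivalently, to argue with closed balls), which is all that the quantified statement (2) requires; the delicate implication you actually need in the cycle you set up, namely from the coordinatewise version to condition (1), also goes through directly because $d(g,X_i)\le\varepsilon_i=d(g,X_i)/2$ already yields a contradiction, so the closed-ball loss is harmless there. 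With these two adjustments your argument is complete and coincides with the paper's proof.
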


Combining this result with Baire category theorem we achieve the following result.

\begin{thm} \ref{thm:ccomp} (bounded case)
Let $(X_n)_{n \in \N}$ be a increasing sequence of internal subsets of $\G^*_{\text{met}}$.
If $\G^*_{\text{met}} = \bigcup_{m\in\N} X_m$, then there is $N\in\N$ such that
\begin{equation*}
\G^*_{\text{met}} =  X_N^2.
\end{equation*}
\end{thm}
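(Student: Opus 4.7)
The plan is to combine Theorem~\ref{thm:sim_b} with the Baire category theorem. Without loss of generality, replace each $X_n$ by $X_n\cup X_n^{-1}\cup\{e\}$, so each $X_n$ is symmetric and contains the identity; this preserves the hypotheses that $(X_n)$ is increasing, internal, and covers $\G^*_{\text{met}}$. I first record that every metrically internal subset $X=\prod_n X_n/\E$ is closed in the metric topology of $\G^*_{\text{met}}$: given a limit point $(g_n)/\E$ of $X$, one picks approximations $(x_n^{(k)})\in\prod_n X_n$ with $\|g_n(x_n^{(k)})^{-1}\|_n<1/k$ on a $\U$-large set $A_k$ and diagonalizes $y_n=x_n^{(k(n))}$ with $k(n)\to\infty$ along $\U$, producing a genuine representative $(y_n)\in\prod_n X_n$ of $(g_n)/\E$.

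In the bounded case $\G^*_{\text{met}}$ is complete, so the Baire category theorem applied to the closed cover $\G^*_{\text{met}}=\bigcup_m X_m$ yields some $X_N$ with nonempty interior; fix $g\in X_N$ and $\delta>0$ with $g\B(\delta)\subseteq X_N$. Symmetry $X_N=X_N^{-1}$ and bi-invariance of the norm (so that $\B(\delta)$ is conjugation-invariant and $g\B(\delta)g^{-1}=\B(\delta)$) give
\[X_N^2\supseteq g\B(\delta)\cdot\B(\delta)g^{-1}=g\B(\delta)^2 g^{-1}=\B(\delta)^2\supseteq\B(\delta),\]
so $X_N^2$ is a neighborhood of the identity. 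Next, applying Theorem~\ref{thm:sim_b} to $(X_m)_m$ with the constant sequence $\varepsilon_i=\delta$ produces some $M\geq N$ with $\G^*_{\text{met}}=X_M\B(\delta)$, and combining yields
\[\G^*_{\text{met}}=X_M\B(\delta)\subseteq X_M\cdot X_N^2\subseteq X_M^3.\]

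To upgrade from $X_M^3=\G^*_{\text{met}}$ to the claimed sharper equality $X_{N'}^2=\G^*_{\text{met}}$ for a possibly larger index $N'$, the plan is to show that the internal subset $X_M^2\subseteq\G^*_{\text{met}}=\bigcup_m X_m$ must already be contained in some single $X_{N'}$; then $\G^*_{\text{met}}=X_M^3=X_M\cdot X_M^2\subseteq X_{N'}\cdot X_{N'}=X_{N'}^2$ closes the argument. This containment is the main obstacle and amounts to a saturation-style property of the metric ultraproduct. Since the external condition ``$x\notin X_m$'' is open rather than closed, the naive $\aleph_1$-saturation of continuous logic does not apply directly, so I would phrase the type via closed approximations ``$d(x,X_m)\geq r_m$'' with a carefully chosen monotonic positive sequence of radii $r_m$ that both witnesses non-containment and makes every finite sub-type satisfiable (using an element of $X_M^2\setminus X_{m^*}$ where $m^*$ is the maximum index of the finite sub-type), after which saturation delivers a realization contradicting $X_M^2\subseteq\bigcup_m X_m$.
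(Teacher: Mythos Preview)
Your proof has a genuine gap in the final step. The claim that the internal set $X_M^2$, being covered by $\bigcup_m X_m$, must lie inside a single $X_{N'}$ is false in general. Indeed $\G^*_{\text{met}}$ itself is internal and covered by the $X_m$, yet Example~\ref{ex:cover} shows it need not equal any $X_{N'}$; and $X_M^2$ can very well be all of $\G^*_{\text{met}}$. More to the point, your proposed saturation argument cannot be salvaged: by the compactness theorem itself (Remark~\ref{rem:gen} applied with $Y=X_M^2$), for \emph{any} sequence $(r_j)_j$ of positive reals there is $N$ with $X_M^2\subseteq\bigcup_{j\le N} X_j\,\B_{r_j}(e)$, which says precisely that the partial type $\{x\in X_M^2\}\cup\{d(x,X_j)\ge r_j : j\le N\}$ is already unsatisfiable. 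So no positive sequence $(r_j)$ makes your type finitely satisfiable. (Your opening WLOG is also unjustified: replacing $X_n$ by $X_n\cup X_n^{-1}\cup\{e\}$ only yields the weaker conclusion $(X_N\cup X_N^{-1}\cup\{e\})^2=\G^*_{\text{met}}$, not $X_N^2=\G^*_{\text{met}}$.)

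The paper's argument avoids both detours. After Baire gives $\B_\varepsilon(g)\subseteq X_m$, do not symmetrize: bi-invariance already gives $\B_\varepsilon(e)\subseteq X_m g^{-1}$, a \emph{single} internal factor times a group element. Applying Theorem~\ref{thm:sim_b} with the constant sequence $(\varepsilon,\varepsilon,\ldots)$ produces $N'$ with
\[
\G^*_{\text{met}}=(X_0\cup\cdots\cup X_{N'})\,\B_\varepsilon(e)\subseteq (X_0\cup\cdots\cup X_{N'})\,X_m\,g^{-1},
\]
and right-multiplying by $g$ yields $\G^*_{\text{met}}=(X_0\cup\cdots\cup X_{N'})\,X_m$. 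Since the $X_n$ are increasing, this is $X_N\cdot X_N=X_N^2$ for $N=\max\{N',m\}$, with no cube-to-square reduction needed.
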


As corollary we get result below on  perfectness. 

\begin{corr} \ref{cor:perf}  (bounded case)
Suppose $\G^*_{\text{met}}$ is perfect, then $\G^*_{\text{met}}$ is uniformly perfect, i.e. there is $N\in\N$ such that every element of $\G^*_{\text{met}}$ is a product of $N$ commutators.
\end{corr}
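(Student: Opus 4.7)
The plan is to apply Theorem \ref{thm:ccomp} to the natural filtration of $\G^*_{\text{met}}$ by the sets $X_m$ of elements expressible as a product of at most $m$ commutators. Concretely, let
\[X_m = \{[a_1,b_1]\cdots[a_m,b_m] : a_i,b_i \in \G^*_{\text{met}}\},\]
with the convention $X_0 = \{e\}$, and let $X_{m,n} \subseteq G_n$ be the analogous set in the component group. The sequence $(X_m)_{m\in\N}$ is increasing (append trivial commutators) and the perfectness assumption is exactly the statement that $\G^*_{\text{met}} = \bigcup_{m\in\N} X_m$.

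The key verification is that each $X_m$ is metrically internal with the witnesses $\{X_{m,n}\}_{n\in\N}$, i.e. $X_m = (\prod_n X_{m,n})/\E$. The inclusion $X_m \subseteq (\prod_n X_{m,n})/\E$ is immediate by choosing representatives componentwise for the $a_i, b_i$. For the reverse inclusion, if $g = (g_n)/\E$ with $g_n = [a_{1,n},b_{1,n}]\cdots[a_{m,n},b_{m,n}]$ for $\U$-almost all $n$, the crucial point is that in the bounded case $\sup_n \|\cdot\|_n < \infty$ guarantees $a_i := (a_{i,n})/\E$ and $b_i := (b_{i,n})/\E$ automatically lie in $\G^*_{\text{met,fin}} = \G^*_{\text{met}}$, so we may form the commutator expression in the ultraproduct and obtain $g = [a_1,b_1]\cdots[a_m,b_m] \in X_m$. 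This is the step that would fail in the unbounded setting.

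Having established that $(X_m)_{m\in\N}$ is an increasing sequence of internal sets exhausting $\G^*_{\text{met}}$, Theorem \ref{thm:ccomp} yields some $N\in\N$ with $\G^*_{\text{met}} = X_N^2$. Since $X_N \cdot X_N \subseteq X_{2N}$ by concatenation of commutator words, we conclude $\G^*_{\text{met}} = X_{2N}$, i.e.\ every element of $\G^*_{\text{met}}$ is a product of at most $2N$ commutators, which is the desired uniform perfectness.

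The only real obstacle is the internality verification above; everything else is a clean application of Theorem \ref{thm:ccomp}. No Baire-category or compactness arguments need to be re-done, since they are packaged inside Theorem \ref{thm:ccomp}.
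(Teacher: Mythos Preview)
Your proof is correct and follows essentially the same route as the paper. The paper packages the argument by first proving the bounded-generation Lemma~\ref{lem:gen} (which itself is a one-line application of Theorem~\ref{thm:ccomp}) and then invoking it with $X$ the set of commutators, citing Example~\ref{ex:internal}(\ref{ex:comm}) for internality; you unpack this and apply Theorem~\ref{thm:ccomp} directly to the filtration $X_m$, with a more explicit internality check highlighting where boundedness enters.
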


We study also torsion and almost-torsion elements in $\G^*_{\text{met}}$. Let us provide our notion of $\varepsilon$-torsion element (Definition \ref{def:etors}). Fix $\varepsilon > 0$ and metric group $(G,\|\cdot\|)$.
\begin{itemize}
    \item $G$ is called \emph{$\varepsilon$-torsion}, if for every $g\in G$ there is $N\in\N$ such that $\|g^N\|< \varepsilon$.
    \item $G$ is called \emph{almost uniformly $\varepsilon$-torsion} if there is $N\in\N$, such that for every $g\in G$ there is $m\le N$ with $\|g^m\|< \varepsilon$.
\end{itemize}

Our compactness theorem gives. 

\begin{corr}\ref{cor:tors} (bounded case) If $\G^*_{\text{met}}$ is $\varepsilon$-torsion group, then $G$ and $\G^*_{\text{met}}$ is almost uniformly $2\varepsilon$-torsion.
\end{corr}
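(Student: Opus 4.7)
The plan is to turn the pointwise hypothesis of $\varepsilon$-torsion into a countable internal cover of $\G^*_{\text{met}}$, then apply Theorem \ref{thm:sim_b} to extract a finite subcover up to small perturbations, and finally absorb those perturbations using bi-invariance of the norm.

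For each $m \in \N$ I would take
\[X_{m,n} = \{g \in G_n : \exists\, k \le m,\ \|g^k\|_n < \varepsilon\}\]
and let $X_m \subseteq \G^*_{\text{met}}$ be the corresponding metrically internal set. These sets are increasing in $m$. If $g \in \G^*_{\text{met}}$ satisfies $\|g^N\| < \varepsilon$, then for any representative $(g_n)$ one has $\lim_{n\to\U}\|g_n^N\|_n < \varepsilon$, so $\|g_n^N\|_n < \varepsilon$ for $\U$-almost all $n$; hence $g \in X_N$. The $\varepsilon$-torsion hypothesis therefore gives $\G^*_{\text{met}} = \bigcup_{m\in\N} X_m$.

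Next I would apply Theorem \ref{thm:sim_b} to the sequence $\varepsilon_i := \varepsilon / (2(i+1))$. It yields $M\in\N$ with
\[\G^*_{\text{met}} = X_0 \B(\varepsilon_0) \cup \cdots \cup X_M \B(\varepsilon_M),\]
the analogous equality also holding in $G_n$ for $\U$-almost all $n$. Fix $g$ in $\G^*_{\text{met}}$ (or in such a $G_n$) and write $g = g' h$ with $g' \in X_i$ and $\|h\| < \varepsilon_i$ for some $i \le M$. Membership in $X_i$ gives $k \le i$ with $\|(g')^k\| \le \varepsilon$. The telescoping identity
\[(g'h)^k = (g')^k \cdot \prod_{j=0}^{k-1} (g')^{-(k-1-j)}\, h\, (g')^{k-1-j},\]
combined with conjugacy invariance of the norm, then gives $\|g^k\| \le \|(g')^k\| + k\|h\| < \varepsilon + i\varepsilon_i < 2\varepsilon$. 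So $M$ is a uniform bound witnessing almost uniform $2\varepsilon$-torsion, both for $\G^*_{\text{met}}$ and for $\U$-almost all factors $G_n$.

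The main delicacy is matching the strict inequality $<\varepsilon$ used in the definition of $X_{m,n}$ with the only non-strict inequality $\|(g')^k\| \le \varepsilon$ that mere membership in the internal set $X_i$ affords; this is the reason for choosing $\varepsilon_i$ strictly smaller than $\varepsilon/i$, which in turn restores the strict bound $\|g^k\| < 2\varepsilon$ needed by the definition of almost uniform $2\varepsilon$-torsion.
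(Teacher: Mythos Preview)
Your proof is correct and follows essentially the same strategy as the paper: cover $\G^*_{\text{met}}$ by the internal sets built from $T_{k,\varepsilon}$, apply Theorem~\ref{thm:sim_b} with a sequence $\varepsilon_i$ of order $\varepsilon/i$, and then use bi-invariance to bound $\|g^k\|$ by $\|(g')^k\|+k\|h\|$. The only cosmetic differences are that the paper uses the single sets $T_{m,\varepsilon}$ rather than your cumulative unions $\bigcup_{k\le m}T_{k,\varepsilon}$, and takes $\varepsilon_m=\varepsilon/m$ instead of your $\varepsilon/(2(i+1))$; your explicit care with the $\le\varepsilon$ versus $<\varepsilon$ issue is in fact slightly cleaner than the paper's presentation.
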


As mentioned above, a large part of article is devoted to simplicity. For standard (discrete) ultraproduct simplicity is equivalent with uniform simplicity (Definition \ref{def:bsim}). In metric case we get \emph{bounded simplicity}.

\begin{corr} \ref{cor:bbsim} (bounded case)
If $\G^*_{\text{met}}$ is simple, then $\G^*_{\text{met}}$ is boundedly simple, that is for any element $g \in G$ there is $N\in\N$ such that $C_{N}(g,G):= \left(g^{G} \cup g^{-1 G}\right)^{\leq N}=G$.
\end{corr}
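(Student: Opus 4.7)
The plan is to recognise $C_m(g,\G^*_{\text{met}})$ as an increasing chain of metrically internal subsets whose union (by simplicity) is all of $\G^*_{\text{met}}$, and then to invoke the countable-compactness Theorem \ref{thm:ccomp}. Fix $g=(g_n)/\E \in \G^*_{\text{met}}$ with $g\neq e$ (the identity case is vacuous). Since $\G^*_{\text{met}}$ is simple, the normal subgroup generated by $g$ equals the whole group, giving the exhaustion
\[
\G^*_{\text{met}} \;=\; \bigcup_{m\in\N} C_m\bigl(g,\G^*_{\text{met}}\bigr),
\]
and the sets $C_m(g,\G^*_{\text{met}})$ are manifestly increasing in $m$ (a product of at most $m$ factors is also a product of at most $m+1$ factors).

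Next I would verify that each set $C_m(g,\G^*_{\text{met}})$ is metrically internal. Set $X_{m,n}:=C_m(g_n,G_n)\subseteq G_n$, so that $X_{m,n}$ consists of all products of at most $m$ conjugates of $g_n^{\pm 1}$ in $G_n$. The inclusion $\prod_n X_{m,n}/\E \subseteq C_m(g,\G^*_{\text{met}})$ follows by reading off conjugators coordinatewise: in the bounded case every sequence represents an element of $\G^*_{\text{met}}$, so a coordinatewise product $\prod_{i\le k}x_{n,i}\,g_n^{\epsilon_i}\,x_{n,i}^{-1}$ with $k\le m$ descends modulo $\E$ to a product of $\le m$ conjugates of $g^{\pm 1}$. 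The reverse inclusion is immediate by lifting each conjugator $x_i\in\G^*_{\text{met}}$ to a representing sequence $(x_{n,i})_{n}$. Hence $C_m(g,\G^*_{\text{met}})$ is metrically internal.

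Applying Theorem \ref{thm:ccomp} to this increasing chain of internal sets produces some $N\in\N$ with
\[
\G^*_{\text{met}} \;=\; C_N\bigl(g,\G^*_{\text{met}}\bigr)^{2} \;\subseteq\; C_{2N}\bigl(g,\G^*_{\text{met}}\bigr),
\]
which is bounded simplicity with constant $2N$. The main (mild) obstacle is the internality verification, since it requires matching products of conjugates in the ultraproduct with coordinatewise products bounded in length; in the bounded case this is essentially free because every sequence is an admissible representative, whereas in the general unbounded case (working inside $\G^*_{\text{met,fin}}$) one would have to be more careful about lifting conjugators so that the resulting products have finite norm modulo $\E$.
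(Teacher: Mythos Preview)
Your proof is correct and follows essentially the same route as the paper: recognise $C_m(g,\G^*_{\text{met}})$ as metrically internal (the paper invokes Example~\ref{ex:internal}(\ref{conj}) and Remark~\ref{rem:clos} rather than spelling out the coordinatewise verification) and then apply Theorem~\ref{thm:ccomp}. The only cosmetic difference is that the paper packages the second step as an application of the bounded-generation Lemma~\ref{lem:gen} to $X=g^{\G^*_{\text{met}}}$, whereas you unfold that lemma and apply Theorem~\ref{thm:ccomp} directly to the increasing chain.
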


Every simple metric ultraproduct known to us actually has a stronger property, which we call \emph{metric uniform simplicity} (Definition \ref{msf}):

\begin{quote} A metric group $(G,\|\cdot\|)$ is metrically uniformly simple, if fo all $r>0$, there is $N \in \N$ such that $C_{N}(g,G)=G$ holds for all $g \in G$ with $\|g\|>r$. 
\end{quote}
We conjecture that.

\begin{conj}\ref{con:gw}
If a metric ultraproduct $(\G_{\text{met}},\|\cdot\|)$ is simple, then $\G_{\text{met}}$ must be metrically uniformly simple.
\end{conj}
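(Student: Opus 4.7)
The natural strategy is to apply the compactness theorem \ref{thm:sim_b} to an increasing family of metrically internal sets witnessing simplicity at bounded levels. The central difficulty is that the target condition $C_N(g,G)=G$ contains a universal quantifier over $h$, so the set $\{g : C_N(g,G)=G\}$ is not obviously metrically internal and cannot be fed directly into the compactness machinery.

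To convert the universal quantifier into an existential one, I would pass to the product ultraproduct $\G^*_{\text{met}} \times \G^*_{\text{met}}$, which is itself a metric ultraproduct (of $G_n \times G_n$ with the max-norm). Fix $r > 0$ and for each $N \in \N$ set
\[
X_N = \{(g, h) \in \G^*_{\text{met}} \times \G^*_{\text{met}} : \|g\| \leq r \text{ or } h \in C_N(g, \G^*_{\text{met}})\}.
\]
The clause $h \in C_N(g, G)$ is the internal existential statement $\exists k_1,\ldots,k_N\ \exists \epsilon_i \in \{\pm 1\}$ with $h = \prod_i k_i g^{\epsilon_i} k_i^{-1}$, so each $X_N$ is metrically internal. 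By the hypothesis of simplicity together with the already-established bounded simplicity (Corollary \ref{cor:bbsim}), every pair $(g, h)$ with $\|g\| > r$ satisfies $h \in C_{N_{g,h}}(g, G)$ for some $N_{g,h}$, so $\bigcup_N X_N = \G^*_{\text{met}} \times \G^*_{\text{met}}$.

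Applying Theorem \ref{thm:sim_b} in the product ultraproduct, for each fixed $\varepsilon > 0$ one obtains $N_\varepsilon$ with $\G^*_{\text{met}} \times \G^*_{\text{met}} = X_{N_\varepsilon} \cdot \B(\varepsilon)$. Unpacking on both coordinates and using bi-invariance of the metric (each conjugate $k_i g^{\epsilon_i} k_i^{-1}$ moves by at most $\varepsilon$ when $g$ is replaced by a nearby $g'$), one deduces that for every $a$ with $\|a\| > r + \varepsilon$ the subset $C_{N_\varepsilon}(a, G)$ is $(N_\varepsilon + 1)\varepsilon$-dense in $\G^*_{\text{met}}$. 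Moreover, metrically internal sets are closed (being $\E$-closures of product sets), and one checks that $C_N(a, G) = \prod_n C_N(a_n, G_n)/\E$, hence $C_N(a, G)$ is a closed subset of $\G^*_{\text{met}}$; so any dense such subset coincides with $\G^*_{\text{met}}$.

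The main obstacle is then the uniform boundedness of $N_\varepsilon$ as $\varepsilon \to 0$: if $N_\varepsilon \leq N_0$ held for all sufficiently small $\varepsilon$, letting $\varepsilon \to 0$ would give $C_{N_0}(a, G)$ dense, hence equal to $\G^*_{\text{met}}$, proving the conjecture. The compactness theorem does not supply such a bound. A tempting contradiction route is $\aleph_1$-saturation of the metric ultraproduct: failure of uniform boundedness should yield a pair $(a, b)$ with $\|a\| > r$ and $b \notin \bigcup_N C_N(a, G)$, contradicting simplicity. The genuine obstruction is that the gap $d(b_N, C_N(a_N, G))$ may shrink to $0$ with $N$ along the counterexample sequences, so the type expressing the failure is not finitely satisfiable with a fixed uniform gap $\delta$, and continuous-logic saturation is too weak to close it. Overcoming this ``vanishing gap'' phenomenon appears to be the new ingredient the conjecture requires.
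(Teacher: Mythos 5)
Your diagnosis is accurate, and the gap you name is real: the argument does not prove the statement. For each mesh $\varepsilon$ you obtain some $N_\varepsilon$ such that $C_{N_\varepsilon}(a,\G^*_{\text{met}})$ is $(N_\varepsilon+1)\varepsilon$-dense whenever $\|a\|>r+\varepsilon$, but to conclude $C_N(a,\G^*_{\text{met}})=\G^*_{\text{met}}$ for a \emph{fixed} $N$ you would need, for that same $N$, density with arbitrarily small mesh (then closedness via Fact \ref{fact:ass} and Lemma \ref{lem:inter} finishes); the compactness theorem \ref{thm:sim_b} only ties each mesh to a possibly larger $N_\varepsilon$, and nothing rules out $N_\varepsilon\to\infty$. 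Be aware that you are essentially re-deriving the paper's own simplicity criterion: your internal sets $X_N$ in the product ultraproduct are the sets $X_n=\{(\bar x,\bar y):\bar x\in C_n(\bar y,\G^*_{\text{met}})\}$ from the proof of Theorem \ref{thm:SimpFam}, and your "unpacking with bi-invariance" is Lemma \ref{lem:aluk}; so the output of your argument is precisely condition (2)/(3) of Theorem \ref{thm:SimpFam}, i.e.\ a characterization of simplicity, not uniform simplicity. (Two smaller points: $\{(g,h):\|g\|\le r\}$ is not literally metrically internal — replace it by the canonical internal set between the open and closed ball as in Example \ref{ex:internal}(3) — and the identification of $C_N(\bar g,\G^*_{\text{met}})$ with the coordinatewise internal set needs Assumption \ref{eq:ass}, which is automatic in the bounded case you work in.)

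You should also know that the paper does not close this gap either: the statement is stated as Conjecture \ref{con:gw} and is proved only under an extra hypothesis, and by a genuinely different mechanism than compactness. The paper introduces the $(\star)$-property (Definition \ref{def:star}), whose key clause (2) says that if $N(g,G)$ and $N(h,G)$ are both large then $N(gh,G)$ is large, where $N(g,G)=\min\{n:C_n(g,G)=G\}$. This makes $Z_{\U}=\{(g_n)_n:\ N(g_n,G_n)\to\infty \text{ along }\U\}$ a normal subgroup (Fact \ref{ll}); one always has $\E\le Z_{\U}$, and simplicity forces $\E=Z_{\U}$ (Fact \ref{E<Z}). A failure of metric uniform simplicity then produces a sequence of elements of norm $>r$ lying in $Z_{\U}=\E$, a contradiction (Theorems \ref{Us->mus} and \ref{2Us->mus}, and the transfer statement in Corollary \ref{Us->bound} via Fact \ref{**prop}). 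In other words, the algebraic closure property $(\star)(2)$ is exactly what substitutes for the uniform bound on $N_\varepsilon$ that your compactness route cannot supply; without some such hypothesis the "vanishing gap" you identify is, as far as the paper knows, the open problem itself.
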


Section \ref{sec:met} is devoted to the study of Conjecture \ref{con:gw}. We introduce there an assumption, called $(\star)$-property in Definition \ref{def:star}, which allows to define an analogy of subgroup of infinitesimal sequences in (possibly non-metric) product of groups (Definition \ref{def:zz}, Fact \ref{ll}). We prove Conjecture \ref{con:gw} under $(\star)$-property in Theorem \ref{2Us->mus}. We also prove certain transfer result under the bounded assumption (Theorem \ref{Us->mus}, Fact \ref{**prop}): if we have two families of metric groups $\G=(G_n, \|\cdot\|_n)_{n \in \N}$ and $\G'=(G_n, \|\cdot\|'_n)_{n \in \N}$ with the same underlying groups $G_n$, such that $\G$ is metrically uniformly simple and $\G^{'*}_{\text{met}}$ is simple, then $\G^{'*}_{\text{met}}$ is  metrically uniformly simple too.

Large part of our work is devoted to construction a new examples of simple groups. In section \ref{sec:sym} we construct a family of simple groups based on permutation groups, as explained below.

\begin{ex}\ref{ex:per}\ \ 
For any sequence $\bar{c}=\left(c_n\right)_{n\in\N}$ of positive real numbers such that $\lim_{n\to\U}c_n=0$, consider \[\S(\bar{c})=\left(S_\infty, c_n\|\cdot \|_H\right)_{n \in \N},\]
    where $S_\infty=\bigcup_{n\in\N} S_n$. Then $\S(\bar{c})^*_{\text{met, fin}}$ is a simple group (by \ref{cor:stozek}). We do not know if 
    \[\S\left(\frac{1}{n}\right)^*_{\text{met, fin}} \cong \S\left(\frac{1}{n^2}\right)^*_{\text{met, fin}} ?\]
    We conjucture that each $\S(\bar{c})^*_{\text{met, fin}}$ is a universal sofic group \cite{pestov1}.
\end{ex}

Section \ref{sec:app} is devoted to Theorem \ref{thm:iet} which deals with approximation of a metric group by a simple family of metric groups. As an application of this theorem we prove a result in Section \ref{sec:iet} on $\IET$.

An \emph{interval exchange transformation} is a  bijective map $f\colon [0,1] \rightarrow [0,1]$ which is piecewise translation,  continuous on the right with finitely many discontinuity points.

The set of all interval exchange transformation with composition form a group, which we denote by $\IET$. A bi-invariant norm of an element $g \in \IET$ is a Lebesgue measure of its support:
\[\|g \|_\mu= \mu(\supp(g)).\]

\begin{thm}\ref{thm:iett}
Any metric ultrapower of $\IET$ with respect to $\|\cdot\|_\mu$  is a simple group, in fact metrically uniformly simple (Definition \ref{msf}).
\end{thm}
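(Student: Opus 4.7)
The plan is to deduce this from the approximation result Theorem \ref{thm:iet} by exhibiting $(\IET, \|\cdot\|_\mu)$ as a metric approximation of the family of finite alternating groups $(A_n, \|\cdot\|_H)_{n \in \N}$ equipped with the normalised Hamming metric. It is classical that these $A_n$ are uniformly simple in a strong quantitative form: there is an absolute constant $c$ such that for every $n$ and every $\sigma \in A_n$ with $\|\sigma\|_H > r$, one has $C_N(\sigma, A_n) = A_n$ with $N = \lceil c/r \rceil$. Hence $(A_n, \|\cdot\|_H)_{n \in \N}$ is metrically uniformly simple in the sense of Definition \ref{msf}, with a witnessing function independent of $n$.

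Next I would construct the approximation explicitly. For each $n$, partition $[0,1]$ into $n$ equal subintervals of length $1/n$ and define $\iota_n \colon A_n \hookrightarrow \IET$ by sending $\sigma$ to the IET that permutes these subintervals by translation according to $\sigma$. Each $\iota_n$ is an injective group homomorphism and an isometric embedding of $(A_n, \|\cdot\|_H)$ into $(\IET, \|\cdot\|_\mu)$, since a permutation moving $k$ points corresponds to an IET of support measure $k/n$. The key density statement is that $\bigcup_n \iota_n(A_n)$ is $\|\cdot\|_\mu$-dense in $\IET$: given $g \in \IET$ with breakpoints forming a finite set $F$ and any $\varepsilon > 0$, I would pick $n$ much larger than $|F|/\varepsilon$ and snap the breakpoints of $g$ and of $g^{-1}$ to the nearest multiple of $1/n$; away from a set of measure at most $2|F|/n$ the modified map is a translation permutation of the $n$ equal subintervals, and a correction by a single $3$-cycle of cost $3/n$ places it in $\iota_n(A_n)$, yielding a $\|\cdot\|_\mu$-approximation of $g$ to within $\varepsilon$.

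The main obstacle will be verifying the precise hypotheses required by Theorem \ref{thm:iet}: beyond metric density one typically needs compatibility between conjugation in the target group and in the approximants, which here is automatic because each $\iota_n$ is an isometric group embedding. Once this fit is established, Theorem \ref{thm:iet} transports the uniform simplicity constant from $(A_n, \|\cdot\|_H)_{n \in \N}$ to any metric ultrapower $\IET^*_{\text{met}}$: given $g = (g_m)/\E$ with $\|g\| > r$, for $\U$-almost all $m$ one approximates $g_m$ by some $\iota_{n(m)}(\sigma_m)$ with $\|\sigma_m\|_H > r/2$, applies the uniform bound $N = \lceil 2c/r \rceil$ inside $A_{n(m)}$ to express any target as a product of $N$ conjugates of $\sigma_m^{\pm 1}$, transfers the identity back through $\iota_{n(m)}$, and absorbs the resulting errors into the infinitesimal subgroup $\E$. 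This yields metric uniform simplicity of $\IET^*_{\text{met}}$, and hence simplicity.
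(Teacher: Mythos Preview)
Your approach is essentially the paper's: apply Theorem~\ref{thm:iet} by embedding permutation groups into $\IET$ as permutations of $n$ equal subintervals, then transfer the quantitative simplicity bound. The paper uses $(S_n,\tfrac{1}{n}\|\cdot\|_H)$ rather than $(A_n,\tfrac{1}{n}\|\cdot\|_H)$, which sidesteps parity altogether; your variant with $A_n$ works too, but note a small slip: a $3$-cycle is an \emph{even} permutation, so composing with one cannot move an odd permutation into $A_n$. If the rounded permutation lands in $S_n\setminus A_n$, compose with a single transposition of adjacent subintervals (cost $2/n$) instead.
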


Using a model-theoretic argument and results of Liebeck-Shalev from \cite{liesha} we prove that metric ultraproduct of family of linear group is simple.

\begin{thm}\ref{PslC}
Let $G_n = (\SL_{m_n}(\CC),\ll_{J})$, for some $m_n\in\N_{>1}$.
Any metric ultraproduct of $(G_n)_{n\in\N}$ is a simple group. In fact, it is metrically  uniformly  simple (Definition \ref{msf}).
\end{thm}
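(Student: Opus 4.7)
The plan is to reduce the claim to a uniform bound on conjugation-diameters in $\SL_{m}(\CC)$ and feed that bound into the compactness theorem \ref{thm:sim_b}. By the characterization of metric uniform simplicity given in Definition \ref{msf}, we must show that for every $r>0$ there exists $N \in \N$ such that every $g \in \G^*_{\text{met}}$ with $\|g\|>r$ satisfies $C_N(g, \G^*_{\text{met}}) = \G^*_{\text{met}}$. Using the fact that $\|g\|=\lim_{n\to\U}\ll_J(g_n)$, a standard $\U$-almost-all transfer (the equivalence in Theorem \ref{thm:sim_b} applied to the internal sets $X_n = C_N(g_n, G_n)$) shows that this will follow once we prove the following uniform statement at the level of the factors:

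\begin{quote}
\emph{For every $r>0$ there exists $N=N(r) \in \N$ such that for every $m \ge 2$ and every $g\in\SL_m(\CC)$ with $\ll_J(g)>r$, one has $C_N(g,\SL_m(\CC)) = \SL_m(\CC)$.}
\end{quote}

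The core input here is the Liebeck--Shalev type estimate from \cite{liesha}: there is an absolute constant $C>0$ such that for every $m$ and every non-central $g\in\SL_m(\CC)$,
\[
C_{\lceil C/\ll_J(g)\rceil}(g,\SL_m(\CC)) = \SL_m(\CC).
\]
Indeed, the norm $\ll_J$ is (up to normalisation) the rank of $g-\mathrm{Id}$ divided by $m$, and Liebeck--Shalev's bound on the covering number of a non-trivial conjugacy class in a simple group of Lie type behaves, in the normalisation we are using, as the inverse of $\ll_J(g)$, uniformly in $m$. Setting $N(r):=\lceil C/r\rceil+1$ gives the required uniform statement.

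The principal technical obstacle is justifying this uniform bound for the complex group $\SL_m(\CC)$ rather than for the finite Chevalley groups that are the original setting of Liebeck--Shalev. This is where the model-theoretic input enters: for each fixed $N$ and each fixed bound on the normalised rank, the statement ``every element with $\rk(g-\mathrm{Id})/m > r$ has $C_N(g,\SL_m) = \SL_m$'' is a first-order scheme in the language of fields (a uniform family of such schemes indexed by $m$). Thus the bound, which Liebeck--Shalev establish for $\SL_m$ over $\bar\F_p$ (or over sufficiently large finite fields, plus a compactness step), transfers to $\SL_m(\CC)$ via the isomorphism of $\CC$ with a discrete ultrapower of algebraically closed fields of positive characteristic of appropriate cardinality, or by direct reference to the algebraic-group versions in \cite{liesha}.

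Combining the uniform bound with Theorem \ref{thm:sim_b} we conclude that for every $r>0$ the set $\{g \in \G^*_{\text{met}} : \|g\|>r\}$ is swallowed by a single finite-level conjugation-cover, so $\G^*_{\text{met}}$ is metrically uniformly simple in the sense of Definition \ref{msf}. Simplicity of $\G^*_{\text{met}}$ is then an immediate consequence (any element of positive norm generates the whole group as a normal closure, and elements of zero norm are already identified in the metric ultraproduct).
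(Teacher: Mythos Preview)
Your proposal is correct and follows essentially the same route as the paper: transfer the Liebeck--Shalev covering bound from finite fields to $\CC$ by expressing it as a first-order statement in the language of fields, obtain that the family $(\SL_{m_n}(\CC),\ll_J)$ is metrically uniformly simple, and conclude for the ultraproduct. The only cosmetic difference is that the paper packages the last step via Remark~\ref{MetPrdMUS} (metric uniform simplicity passes to metric ultraproducts) rather than invoking Theorem~\ref{thm:sim_b}, which is not really needed here since the transfer of $C_N(g_n,G_n)=G_n$ to the ultraproduct is immediate from the internal description of $C_N(g,\G^*_{\text{met}})$.
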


In Section \ref{sec:dir_lim} we construct a simple group as a direct limit of system of linear groups.



\section{Metric ultraproducts} \label{sec:intro}

Let us explain some basic terms. Our definition of a metric group below follows \cite[Sec. 2.1]{nst} (see also \cite[Section 10.4.]{drutu}).

Let $[0,\infty]$ be $[0,\infty)\cup\{\infty\}$, where $\infty>r$ for all $r\in[0,\infty)$. We regard  $[0,\infty]$ as a compact space, where the neighbourhoods of $\infty$ are of the form $(c,\infty)$, for $c\in[0,\infty)$. 

\begin{definition}\label{def:met}
A \emph{bi-invariant metric} on a group $G$ is a metric $d\colon G\times G\to [0,\infty]$ such that $d(gx,gy)=d(x,y)=d(xg,yg)$ for every $g,x,y\in G$.
Every such metric comes from a \emph{bi-invariant} (also called  \emph{conjugacy invariant}) norm $\|\cdot\|\colon G\to [0,\infty]$ satisfying
\begin{enumerate}
    \item[(0)] $\|e\|=0$,
    \item $\|gh\|\leq \|g\| + \|h\|$,
    \item $\left\| g^{-1} \right\| = \|g\| = \|hgh^{-1}\|$,
    \item $\|g\|=0$ if and only if $g=e$,
\end{enumerate} 
for all $g,h\in G$.  That is, if $\|\cdot\|$  satisfies (0), (1), (2), and (3), then $d(x,y)=\left\|xy^{-1}\right\|$ is a bi-invariant metric on $G$. Moreover, if $d$ is a bi-invariant metric, then $\|g\| = d(g,e)$ is a bi-invariant norm. A \emph{pseudo bi-invariant norm} is a function on $G$, satisfying (0), (1) and (2), but not necessarily (3). Such a norm gives rise to a \emph{pseudo bi-invariant metric}. A \emph{metric group}, or a \emph{pseudo-metric group} $(G,\|\cdot\|)$ is a group $G$ equipped with some bi-invariant or pseudo bi-invariant norm $\|\cdot\|$.
\end{definition}

By $\U$ we always denote a non-principal ultrafilter on $\N$. Let us define standard (discrete) ultraproduct $\G^*$ of any family of groups $\G = (G_n,)_{n\in \N}$ with respect to $\U$:
\begin{equation}\label{def:ult}
\G^* = \frac{\prod_{n\in \N}G_n}{\E_d},\text{ where }\E_d = \left\{\left(g_n\right)_{n\in\N}\in \prod_{n\in \N}G_n :\ \{ n\in\N: g_n = e\}\in\U \right\}.
\end{equation}

Let us remind the notion of the limit with respect to ultrafilter. Suppose $\{s_n\}_{n\in\N}$ is a bounded sequence or reals. By $\lim_{n\to \U} s_n$ we mean the \emph{limit of $\{s_n\}_{n\in\N}$ over $\U$}, that is a real number $s\in\R$ which is uniquely defined by the following condition: for every $\varepsilon>0$ the set $\{n\in\N : |s_n-s|<\varepsilon\}$ belongs to $\U$. We are now ready to define metric ultraproduct.

\begin{definition}\label{def:inff}
Suppose $\G = (G_n,\|\cdot\|_n)_{n\in \N}$ is a family of pseudo-metric groups. A \emph{metric ultraproduct} $\G^*_{\text{met}}$ of $\G$ with regard to $\U$ is defined as a quotient group: 
\[\G^*_{\text{met}} = \frac{\prod_{n\in \N}G_n}{\E},\]
where $\E$ is the subgroup of \emph{infinitesimals}
\begin{equation}\label{inf}
\E = \left\{\left(g_n\right)_{n\in\N}\in \prod_{n\in \N}G_n : \lim_{n\to \U}\left\|g_n\right\|_n = 0 \right\}.
\end{equation}
\end{definition} 

\begin{remark}\label{rem:xxx}
\begin{enumerate}
    \item $\G^*_{\text{met}}$ is equipped with a pseudo bi-invariant norm
\begin{equation} \label{eq:norm}
\|\cdot\|\colon \G^*_{\text{met}}\to [0,\infty]\text{ defined as }\left\|(g_n)/\E\right\| = \lim_{n\to \U} \left\|g_n\right\|_n.
\end{equation}
    \item Since obviously $\E_d$ from (\ref{def:ult}) is a subgroup of $\E$ from (\ref{inf}), there is a natural epimorphism $\G^*\to \G^*_{\text{met}}$. 
\end{enumerate}
\end{remark}

We are interested mainly in elements of $\G^*_{\text{met}}$ of finite norm, that is why we define below $\G^*_{\text{met, fin}}$.

\begin{definition}\label{def:finn}
\emph{A finitary subgroup} $\G^*_{\text{met, fin}}$ of $\G^*_{\text{met}}$ is defined as
\begin{equation} \label{def:fin}
\G^*_{\text{met, fin}} = \left\{x\in \G^*_{\text{met}} : \|x\|<\infty\right\}.
\end{equation}
In fact $\G^*_{\text{met, fin}}=\G_{\text{fin}}/{\E}$, where $\G_{\text{fin}}= \left\{\left(g_n\right)_{n\in \N}\in \prod_{n\in \N}G_n : \sup_{n\in \N}\left\|g_n\right\|_n< \infty \right\}$. 
\end{definition}

Obviously, if the family $\{\|\cdot\|_n\}_{n\in\N}$ is uniformly bounded (i.e. $\sup_{n\in\N}\|\cdot\|_n <\infty$), then $\G^*_{\text{met, fin}}=\G^*_{\text{met}}$. However, in many interesting cases $\{\|\cdot\|_n\}_{n\in\N}$ is not uniformly bounded, so we work mainly with $\G^*_{\text{met, fin}}$.

When $\left(G_n,\|\cdot\|_n\right) = (G,\|\cdot\|)$, for all $n\in\N$, we use the symbol $G^*_{\text{met, fin}}$ for $\G^*_{\text{met, fin}}$. We have, in this case,  a standard isometric homomorphic embedding \[\varphi\colon G\to G^*_{\text{met, fin}}, \text{ given by }\varphi(g)=(g,g,g,\ldots)\E.\]

Let us record below some well-known properties of $\G^*_{\text{met, fin}}$.

\begin{lemma} \label{lem:propp}
Fix a family $\G$ of metric group. 
\begin{enumerate}
    \item The norm (\ref{eq:norm}) makes $\G^*_{\text{met, fin}}$ a topological group, that is, multiplication $\cdot\colon \G^*_{\text{met, fin}}\times \G^*_{\text{met, fin}}\to \G^*_{\text{met, fin}}$ is continuous with respect to $\|\cdot \|$.
    \item  $(\G^*_{\text{met, fin}}, \|\cdot \|)$ is a complete metric space, that is, every countable descending family of balls $\B = \{B_n : n\in\N\}$, $B_{n+1}\subseteq B_n\subseteq \G^*_{\text{met, fin}}$, has a non-empty intersection $\bigcap \B\neq \emptyset$.
\end{enumerate}
\end{lemma}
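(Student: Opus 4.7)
My plan is to prove (1) directly from bi-invariance of the norm on $\G^*_{\text{met, fin}}$, and to prove (2) by a diagonal construction across the ultrafilter that realizes, in a single representative sequence, the countable family of $\U$-large conditions imposed by membership in each ball.

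For (1), first verify that $\|(g_n)/\E\| := \lim_{n\to\U}\|g_n\|_n$ is well defined on the quotient by $\E$: if $(g_n)$ and $(g'_n)$ differ by an infinitesimal, then the triangle inequality gives $\bigl|\,\|g_n\|_n - \|g'_n\|_n\,\bigr| \le \|g_n(g'_n)^{-1}\|_n$, whose $\U$-limit is $0$. Bi-invariance is inherited factorwise, so left and right translations of $\G^*_{\text{met, fin}}$ are isometries of $d(x,y) := \|xy^{-1}\|$; composing these yields the joint Lipschitz bound
\[
d(xy, x'y') \le d(xy, x'y) + d(x'y, x'y') = d(x, x') + d(y, y'),
\]
which is uniform continuity of multiplication. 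Inversion is an isometry by axiom~(2) of Definition~\ref{def:met}, hence continuous.

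For (2), fix descending closed balls $B_k = \overline{B}(x_k, r_k)$ with representatives $x_k = (h_n^{(k)})_n/\E \in \G^*_{\text{met, fin}}$, chosen so that each $(h_n^{(k)})_n \in \G_{\text{fin}}$. Enumerate $\N \times \N$ as $((k_i, m_i))_{i \in \N}$ and set
\[
C_{\le i} := \bigl\{ n \in \N : \exists g \in G_n,\ \|g (h_n^{(k_l)})^{-1}\|_n < r_{k_l} + 1/m_l \text{ for all } l \le i \bigr\}.
\]
Since $B_{k_l} \ne \emptyset$, any point $y \in B_{\max_{l \le i} k_l}$ supplies a representative that $\U$-almost everywhere simultaneously witnesses all $i{+}1$ strict inequalities, so $C_{\le i} \in \U$. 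Define $i(n) := \max\{i \le n : n \in C_{\le i}\}$ (or $0$ if empty) and pick $g_n \in G_n$ witnessing $C_{\le i(n)}$ at index $n$ (setting $g_n = e$ off the diagonal). For any fixed $(k, m) = (k_{i_0}, m_{i_0})$, the $\U$-large set $C_{\le i_0} \cap \{n \ge i_0\}$ is contained in $\{n : \|g_n (h_n^{(k)})^{-1}\|_n < r_k + 1/m\}$, so $\lim_{n\to\U} \|g_n (h_n^{(k)})^{-1}\|_n \le r_k$ and $(g_n)_n/\E \in \bigcap_k B_k$.

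The main obstacle is the diagonal/saturation bookkeeping: proving $C_{\le i} \in \U$ uniformly in $i$, pushing the witness through the $\U$-limit for every $(k, m)$, and verifying $(g_n) \in \G_{\text{fin}}$ --- the last follows from $\|g_n\|_n \le \|h_n^{(0)}\|_n + r_0 + 1$ on the diagonal together with $\sup_n \|h_n^{(0)}\|_n < \infty$.
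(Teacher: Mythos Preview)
The paper does not actually prove this lemma: it simply remarks that (1) follows from conjugacy-invariance of $\|\cdot\|$ and refers (2) to \cite[Corollary 10.64(1)]{drutu} and \cite[Proposition 4.2(c)]{dries-wilkie}. Your argument for (1) is exactly the standard bi-invariance computation the paper alludes to, and your argument for (2) is the usual countable-saturation/diagonal construction that underlies those cited results, so the approaches coincide.

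One small point to tighten in (2): your claim that $(g_n)_n\in\G_{\text{fin}}$ relies on the bound $\|g_n\|_n\le\|h^{(0)}_n\|_n+r_0+1$ ``on the diagonal'', but this bound only holds for those $n$ with $i(n)\ge i_0$, where $i_0$ is the index at which the pair $(0,1)$ appears in your enumeration of $\N\times\N_{>0}$. For $n$ with $i(n)<i_0$ (or $n\notin C_{\le 0}$) the witness $g_n$ is not controlled. The fix is trivial---either start the enumeration with $(k_0,m_0)=(0,1)$, or simply redefine $g_n:=h^{(0)}_n$ on the $\U$-null set where the bound fails; neither change affects the $\U$-limit computations. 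With that adjustment the proof is complete.
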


The first part of Lemma \ref{lem:propp} follows from the conjugacy-invariance of $\|\cdot\|$. The proof of the second part of Lemma \ref{lem:propp} is \cite[Corollary 10.64 (1)]{drutu}, which has roots in \cite[Proposition 4.2 (c), p. 364]{dries-wilkie}.

\subsection{Notation} Let us introduce some group-theoretic notation. For $g,h\in G$ we put $g^h=h^{-1}g h$ and $g^G=\left\{g^h : h\in G\right\}$. 
For a metric group $(G,\|\cdot\|)$, $g \in G$, a natural number $n\in\N$ and a positive real number $\varepsilon\in \R_{>0}$ define $\varepsilon$-balls around $g$: \begin{equation}\label{eq:not}
\B_\varepsilon(g,G)=g\cdot\B(\varepsilon,G) =\left\{h\in G : \left\|g^{-1}h\right\| < \varepsilon \right\},\ \B_{\le\varepsilon}(g,G)=g\cdot \B(\le \varepsilon,G) =\left\{h\in G : \left\|g^{-1}h\right\| \le \varepsilon \right\}.
\end{equation}

\section{New compactness theorem for metric ultraproducts of groups}\label{sec:compTh}

Let us fix throughout this section a family $\G = (G_n,\|\cdot\|_n)_{n\in \N}$ of metric groups. Note that $\|\cdot\|_n\colon G_n\to [0,\infty]$, so we allow $\|\cdot\|$ to have $\infty$ as a value. 

After introducing \emph{metrically internal sets} in Section \ref{sec:inter}, we give a new compactness theorem for $\G^*_{\text{met, fin}}$ in Section \ref{sec:comp}. Then we apply Baire category theorem to $(\G^*_{\text{met, fin}},\| \cdot\|)$ in Section \ref{sec:baire}, to get new results on metric ultraproducts.

\subsection{Metrically internal sets} \label{sec:inter}

Our new compactness theorem for $\G^*_{\text{met, fin}}$ involves \emph{metrically internal subsets} from Definition \ref{def:inter} below. This definition has roots in non-standard analysis (see e.g. \cite[Definition 10.34]{drutu}). It is more convenient to work with subsets of $\G^*_{\text{met}}$ rather that in $\G^*_{\text{met, fin}}$. That is why we define metrically internal subsets as some subsets of $\G^*_{\text{met}}$.

Let us briefly explain the situation for discrete metric, i.e. for standard ultraproduct $\G^*$. An \emph{internal subset $X$ of $\G^*$} is of the form  
\begin{equation}\label{inter}
X =  \frac{\prod_{n\in\N} X_n}{\E_d} = \frac{X_0\times X_1\times X_2\times\cdots}{\E_d},
\end{equation}
where each $X_n$ is a subset of $G_n$. This can be naturally generalized to 
$\G^*_{\text{met}}$ as in Definition \ref{def:inter} below.

\begin{definition} \label{def:inter}
A subset $X\subseteq \G^*_{\text{met}}$ is called \emph{metrically internal} if there is a collection of sets $\{X_n\}_{n\in\N}$, $X_n\subseteq G_n$ such that
\[X =  \frac{\prod_{n\in\N} X_n}{\E} = \frac{X_0\times X_1\times X_2\times\cdots}{\E},\] where $\E$ is the infinitesimal subgroup (see Definition \ref{def:inff}). 
\end{definition}

Let us give some examples and non-examples of metrically internal sets.

\begin{example}\label{ex:internal}
\begin{enumerate}
    \item \label{conj} A conjugacy class $\bar{g}^{\G^*_{\text{met}}}$, for any $\bar{g}=(g_n)_{n\in\N}/\E\in \G^*_{\text{met}}$, is metrically internal, as $\bar{g}^{\G^*_{\text{met}}} = \frac{\prod_{n\in\N} {g_n}^{G_n}}{\E}$.
    
    \item \label{ex:comm} The set of all commutators $\left\{\bar{g}^{-1}\bar{h}^{-1}\bar{g}\bar{h} : \bar{g},\bar{h}\in \G^*_{\text{met}}\right\}$ and the set of all $n$-powers $\left\{\bar{g}^{n} : \bar{g}\in \G^*_{\text{met}}\right\}$ are metrically internal (for the same reason as in (\ref{conj})).
    
    \item \label{balls} An open ball $\B_{\varepsilon}\left(\bar{g}\right)$
    and closed ball $\B_{\le\varepsilon}\left(\bar{g}\right)$
    for $\bar{g}=(g_n)_n/\E\in \G^*_{\text{met}}$ (see (\ref{eq:not})), may not be metrically internal, but there are canonical metrically internal sets in between:
    \[\B_{\varepsilon}\left(\bar{g},\G^*_{\text{met}}\right) \subseteq \frac{\prod_{n\in\N} \B_{\varepsilon}(g_n,G_n)}{\E} \subseteq \frac{\prod_{n\in\N} \B_{\le \varepsilon}(g_n,G_n)}{\E} \subseteq   \B_{\le\varepsilon}\left(\bar{g},\G^*_{\text{met}}\right).\]
    
    \item It is not true in general that a definable subset of $\G^*_{\text{met}}$ corresponds to definable subsets from coordinates. That is, fix $\varphi$  a formula in some first order logic, then the following natural equality (\ref{eq:logic}) may not be true:
    \begin{equation}\label{eq:logic}
    \left\{\bar{g}\in \G^*_{\text{met}} : \varphi(\bar{g})\text{ holds in }\G^*_{\text{met}} \right\} = \frac{\prod_{n\in\N} \left\{g_n\in G_n : \varphi(g_n)\text{ holds in }G_n \right\}}{\E},
    \end{equation}
    We give a counterexample to (\ref{eq:logic}) in (\ref{torsion2}) below. We consider torsion elements, that is 
    $\varphi_m(\bar{g})=({\bar{g}}^m=e)$, for $m\in\N$.
    
    \item \label{torsion} Fix a metric group $(G,\|\cdot\|)$, $\varepsilon \geq 0$ and $m\in\N$. We define \emph{elements of $\varepsilon$-order $m$} as elements from $T_{m,\le\varepsilon}(G)$ where : 
    \begin{equation}\label{eq:tors}
    T_{m,\leq\varepsilon}(G) = \{g\in G: \|g^m\|\leq \varepsilon\}\text{, also define }
    T_{m,\varepsilon}(G) = \{g\in G: \|g^m\|<\varepsilon\}.
    \end{equation}
    Those set may not be internal, however we have as in (\ref{balls}):
    \begin{equation}\label{eq:subset}
    T_{m,\varepsilon}\left(\G^*_{\text{met}}\right) \subseteq \frac{\prod_{n\in\N} T_{m,\varepsilon}(G_n)}{\E} \subseteq \frac{\prod_{n\in\N} T_{m,\le \varepsilon}(G_n)}{\E} \subseteq   T_{m,\le\varepsilon}\left(\G^*_{\text{met}}\right).
    \end{equation}

    \item \label{torsion2}
    The last $\subseteq$ in (\ref{eq:subset}) above may be a proper subset, as we give an example where $T_{3,\le0}(\G^*_{\text{met}})\neq\emptyset$, but $T_{n,3,\le 0}(G_n)=\emptyset$. Take \[G_n=\left(\Z_{2^n},\|\cdot\|_{2^n}\right),\] where $\|\cdot\|_{2^n}\colon \Z_{2^n}\to [0,1]$ is a natural norm, given by $\|g\|_{2^n}=\frac{1}{2^{n-1}}\min\{g,2^n-g\}$ (called \emph{Lee norm}), for $g\in\{0,1,\ldots,2^n-1\}$.
    Consider $\varphi_3(g)=(3g=0)$.
    Then each $G_n$ has no elements of order 3, but $\G^*_{\text{met}}$ has such element, e.g. $\bar{g} = \left(\left\lfloor \frac{2^n}{3} \right\rfloor\right)_{n\in\N}/\E$ has order 3.
\end{enumerate}
\end{example}

Let us notice that metrically internal set are closed under $\cdot$ and $\cup$.

\begin{remark}\label{rem:clos}
\begin{enumerate}
     \item  If $Y=\frac{\prod_{n\in\N} Y_n}{\E}$ and $Z=\frac{\prod_{n\in\N} Z_n}{\E}$ are metrically internal, then so are \[Y\cdot Z = \frac{\prod_{n\in\N} Y_n\cdot Z_n}{\E}\text{ and }Y\cup Z = \frac{\prod_{n\in\N} Y_n\cup Z_n}{\E}.\]
    
    \item $\G^*_{\text{met}}\setminus X$ may not be metrically internal, for metrically internal $X =  \frac{\prod_{n\in\N} X_n}{\E}$ (for example $\G^*_{\text{met}}\setminus \{e\}$ is not metrically internal for compact $G$, see Example \ref{ex:compact}). However, this set is contained in a canonical metrically internal subset
    $\G^*_{\text{met}}\setminus X \subseteq 
    \frac{\prod_{n\in\N} G_n\setminus X_n}{\E}$.
    \end{enumerate}
\end{remark}


Lemma \ref{lem:inter} below is crucial in the proof of Theorem \ref{thm:sim2}, which is new  analogue of the classical compactness theorem for $\G^*_{\text{met, fin}} = \left\{x\in \G^*_{\text{met}} : \|x\|<\infty\right\}$.

\begin{lemma} \label{lem:inter}
If $X\subseteq \G^*_{\text{met}}$ is metrically internal, then $X\cap \G^*_{\text{met, fin}}$ is closed with respect to the topology on $\G^*_{\text{met, fin}}$ induced by $\|\cdot\|$.
\end{lemma}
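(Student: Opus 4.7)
I will prove closedness by a sequence argument: given any sequence $(\bar{y}^{(k)})_{k\in\N}$ in $X \cap \G^*_{\text{met, fin}}$ with $\bar{y}^{(k)} \to \bar{y} \in \G^*_{\text{met, fin}}$, I will construct a single representative of $\bar{y}$ lying entirely in $\prod_n X_n$, which forces $\bar{y}\in X$. The central device is a diagonal selection across the ultrafilter $\U$.

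If $X \cap \G^*_{\text{met, fin}}=\emptyset$ there is nothing to show, so assume it is nonempty; then $\prod_n X_n$ is nonempty and in particular every $X_n$ is nonempty. Choose representatives $(y^{(k)}_n)_{n}\in\prod_n X_n$ of $\bar{y}^{(k)}$ and some representative $(y_n)_n$ of $\bar{y}$. Passing to a subsequence I may assume $\|\bar{y}^{(k)}\bar{y}^{-1}\|<2^{-k}$, so that each
\[
U_k \;=\; \left\{ n\in\N :\ \|y^{(k)}_n y_n^{-1}\|_n < 2^{-k}\right\}
\]
belongs to $\U$; replacing $U_k$ by $\bigcap_{j\le k}U_j$, I may also assume $U_0\supseteq U_1\supseteq\cdots$.

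Now I define the diagonal representative $(z_n)_n$ as follows. For $n\in U_0$, put
\[
k(n)\;:=\;\max\{k\le n :\ n\in U_k\},\qquad z_n\;:=\;y^{(k(n))}_n,
\]
which lies in $X_n$ by choice of the $y^{(k)}_n$. For $n\notin U_0$, pick any $z_n\in X_n$ (possible since $X_n\ne\emptyset$). The essential observation is that for every $K\in\N$,
\[
\{n :\ k(n)\ge K\}\;\supseteq\;U_K\cap\{n\ge K\}\;\in\;\U,
\]
and on this set $\|z_n y_n^{-1}\|_n<2^{-k(n)}\le 2^{-K}$. Hence $\lim_{n\to\U}\|z_n y_n^{-1}\|_n=0$, which means $(z_n)_n/\E=\bar{y}$. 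Since $z_n\in X_n$ for every $n$, this exhibits $\bar{y}$ as an element of $X$, completing the proof.

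The only genuinely delicate point is the diagonalisation. Because $\U$ is a non-principal ultrafilter on $\N$, the countable intersection $\bigcap_k U_k$ need not belong to $\U$, so I cannot simply pick $z_n = y^{(k)}_n$ for an arbitrarily large fixed $k$; capping $k$ by $n$ in the definition of $k(n)$ is what forces $k(n)\to\infty$ cofinitely along $\U$ while keeping $z_n$ in the designated set $X_n$. Once this device is in place, the rest of the verification is routine from the definitions of $\E$ and of metrically internal sets.
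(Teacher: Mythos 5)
Your proof is correct and follows essentially the same route as the paper: a sequential closedness argument with a diagonal selection along $\U$, where your cap $k(n)=\max\{k\le n:\ n\in U_k\}$ plays exactly the role of the paper's decreasing sets $V_n=U_0\cap\cdots\cap U_n\setminus\{n\}$ with empty intersection, forcing the diagonal index to tend to infinity $\U$-almost everywhere. Your version is in fact slightly tidier in that you compare $z_n$ directly with $y_n$ and explicitly handle coordinates outside $U_0$ by picking arbitrary elements of the (nonempty) sets $X_n$, a point the paper glosses over by inserting $e$.
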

\begin{proof} 
Let $X= \prod_{n\in\N} X_n/\E$. It suffices to prove that if $a_0,a_1, a_2,\ldots$ is a sequence from $X\cap \G^*_{\text{met, fin}}$, converging to $a_\infty\in G^*_{\text{met, fin}}$, then $a_\infty\in X$. For each $n\in\N_{>0}$ take ${k_n}\in\N$ such that 
\begin{equation}\label{eq:l}
\left\|a_{k_n}^{-1}a_{\infty}\right\|< \frac{1}{n}.
\end{equation}
Write $a_{k_n} = \left(a_{k_n,0},a_{k_n,1},a_{k_n,2},a_{k_n,3},\ldots\right)\E$ and $a_{\infty} = \left(a_{\infty,0},a_{\infty,1},a_{\infty,2},a_{\infty,3},\ldots\right)/\E$, for some $a_{k_n,m}\in X_m$, $a_{\infty,m}\in G_m$. For each $n\in\N$ define \[U_n=\left\{m\in\N : \left\|a_{k_n,m}^{-1}a_{\infty,m}\right\|_m<\frac{1}{n}\text{ in }G_m\right\}.\] Every $U_n$ belongs to $\U$, by (\ref{eq:l}). Define \[V_n=U_0\cap U_1\cap\cdots \cap U_n\setminus \{n\}\in\U.\] Then $V_0\supseteq V_1\supseteq V_2\supseteq\ldots\supseteq V_n\in\U$ and $\bigcap_{n\in\N}V_n = \emptyset$, since $n\not\in V_n$. Define
\begin{equation}\label{eq:def}
b_{m} = 
\begin{cases}	
a_{k_n,m} & \text{: when }m\in V_{n}\setminus V_{n+1} ,\\ 
e & \text{: when } m\not\in V_0\end{cases}.
\end{equation}
Define $b_{\infty} = (b_{0},b_{1},b_{2},b_{3},\ldots)/\E$. Then  $b_m=a_{k_n,m}\in X_m$, so $b_{\infty}\in X$. Our aim it to prove that $a_{\infty}=b_{\infty}$.
It is enough to prove that  $\left\|a_{k_n}^{-1}b_{\infty}\right\| < \frac{2}{n}$ holds for every $n\in\N$ (since $a_{\infty} = \lim_{n\to\infty} a_{n}$). Fix $n\in\N$ and take any $m\in V_n$. Then $m\in V_t\setminus V_{t+1}$ for some $t\geq n$, so $b_m=a_{k_t,m}$ by (\ref{eq:def}). Hence \[\left\|a_{k_n,m}^{-1}b_m\right\|_m = \left\|a_{k_n,m}^{-1}a_{k_t,m}\right\|_m \leq \left\|a_{k_n,m}^{-1}a_{\infty,m}\right\|_m + \left\|a_{\infty,m}^{-1}a_{k_t,m}\right\|_m < \frac{1}{n}+\frac{1}{t}\leq\frac{2}{n}.\] 
Therefore $\left\|a_{k_n}^{-1}b_{\infty}\right\| = \lim_{m\to \U}\left\|a_{k_n,m}^{-1}b_m\right\|_m < \frac{2}{n}$.
\end{proof}

For compact groups, Lemma \ref{lem:inter} gives a complete description of metrically internal sets, as explained in Example below.

\begin{example}\label{ex:compact}
Suppose $(G,\|\cdot\|)$ is a compact metric group (that is $\|\cdot\|$ induces a compact topology on $G$). Then $\|\cdot\|$ is a bounded function and $G^*_{\text{met}} = G^*_{\text{met, fin}}$. Furthermore $G^*_{\text{met}} \cong G$ are isomorphic as metric groups (see \cite[Ex. 10.44]{drutu}). In this case we have that 
\[X\subseteq G^*_{\text{met}}=G\text{ is metrically internal }\Longleftrightarrow X\text{ is a closed subset in } G.\]
Indeed, $\Rightarrow$ is by Lemma \ref{lem:inter},  $\Leftarrow$ follows from the fact that $X = \frac{X\times X\times X\times\cdots}{\E}$ for any closed subset $X$ of compact $G$. 
\end{example}

\subsection{New compactness theorem}
\label{sec:comp}

We fix throughout this section a family of metric groups $\G = (G_n,\|\cdot\|_n)_{n\in \N}$ and a countable family $\{X_m\}_{m\in\N}$ of metrically internal (Definition \ref{def:inter}) subsets of $\G^*_{\text{met}}$, where
\begin{equation}\label{def:metinr}
X_m =  \frac{X_{m,0}\times X_{m,1}\times X_{m,2}\times\cdots}{\E},\ \ X_{m,n}\subseteq G_n.
\end{equation}
We give a new compactness theorem for $\G^*_{\text{met, fin}}$. Our context is when $\G^*_{\text{met, fin}}$ is covered by the union of $\{X_m\}_{m\in\N}$. We then find some kind of finite sub-cover. Let us first recall a well known compactness theorem for standard (discrete) ultraproduct $\G^*$ of $\G$ (see (\ref{def:ult})).

\begin{theorem}\label{comp:st}
The following facts are equivalent:
\begin{enumerate}
    \item $\G^* = \bigcup_{m\in\N} X_m$
    \item There is $N\in\N$ such that $\G^* = X_0\cup \ldots \cup X_N$, which is equivalent with the condition: for $\U$-almost all $n\in\N$
\begin{equation}\label{eq:com}
G_n = X_{0,n} \cup \ldots \cup X_{N,n} 
\text{ holds in }G_n.
\end{equation}
\end{enumerate}
\end{theorem}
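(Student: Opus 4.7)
The implication (2) $\Rightarrow$ (1) is immediate. Within (2), the equivalence between $\G^* = X_0 \cup \ldots \cup X_N$ and the pointwise statement (\ref{eq:com}) for $\U$-almost all $n$ is the standard \L{}o\'s-type argument: $\bar{g} = (g_n)/\E_d \in X_i$ holds exactly when $\{n : g_n \in X_{i,n}\} \in \U$, and taking finite unions commutes with membership in the ultrafilter; so $\G^* \neq X_{0} \cup \ldots \cup X_{N}$ is witnessed by some $\bar{g}$ whose support set $\{n : g_n \notin X_{0,n} \cup \ldots \cup X_{N,n}\}$ lies in $\U$, which is equivalent to $\{n : G_n \neq X_{0,n} \cup \ldots \cup X_{N,n}\}$ being in $\U$.

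The main content is (1) $\Rightarrow$ (2). I would argue by contradiction: assume no finite $N$ works. Then for each $N \in \N$ the set
\[A_N := \bigl\{ n \in \N : G_n \neq X_{0,n} \cup \ldots \cup X_{N,n} \bigr\}\]
belongs to $\U$, by the paragraph above. Exploit countable incompleteness of the non-principal ultrafilter $\U$ on $\N$: set
\[V_N := A_0 \cap A_1 \cap \ldots \cap A_N \cap \{k \in \N : k \geq N\}.\]
Each $V_N \in \U$, the sequence is decreasing, and $\bigcap_{N \in \N} V_N = \emptyset$ because $N \notin V_N$. Now build a diagonal element $\bar{g} = (g_n)/\E_d \in \G^*$ as follows: for $n \notin V_0$, let $g_n = e$, and for $n \in V_0$ let $N(n) := \max\{N : n \in V_N\}$ (well-defined because of $\bigcap V_N = \emptyset$); since $n \in V_{N(n)} \subseteq A_{N(n)}$, pick $g_n \in G_n \setminus (X_{0,n} \cup \ldots \cup X_{N(n),n})$.

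By hypothesis (1), $\bar{g} \in X_m$ for some $m$, so $B := \{n : g_n \in X_{m,n}\} \in \U$. Then $B \cap V_m \in \U$ is non-empty; pick any $n$ in this intersection. By construction $N(n) \geq m$, hence $g_n \notin X_{0,n} \cup \ldots \cup X_{N(n),n} \supseteq X_{m,n}$, contradicting $n \in B$. This yields the desired $N$.

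The only subtle step is the diagonal construction, which requires careful use of countable incompleteness of $\U$; this is precisely the classical ingredient of $\aleph_1$-saturation of ultraproducts and mirrors the book-keeping already used in the proof of Lemma \ref{lem:inter}. Everything else is a routine application of \L{}o\'s's theorem.
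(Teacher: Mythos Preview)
Your argument is correct and is the standard saturation-type proof of this classical fact. Note, however, that the paper does not actually prove Theorem~\ref{comp:st}: it is stated as a ``well known compactness theorem'' and immediately followed by the metric generalizations (Theorems~\ref{thm:sim_b} and~\ref{thm:sim2}), so there is no original proof to compare against.

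That said, your diagonal construction is exactly the mechanism the paper employs in the proof of the metric version, Theorem~\ref{thm:sim2} $(1)\Rightarrow(2)$: there one likewise assumes the covering condition fails for every $N$, takes the sets $U_N\in\U$ of indices where it fails, arranges $\bigcap_N U_N=\emptyset$ by removing an initial segment, defines $k_n=\max\{k:n\in U_k\}$, and picks witnesses $g_{k_n,n}$ to build an element escaping every $X_m$. Your $V_N$ and $N(n)$ play the same roles as the paper's $U_N$ and $k_n$. So while the paper omits the discrete case, your proof is fully in line with the approach it uses for the harder metric statement.
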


Below is a generalization of Theorem \ref{comp:st} to the metric setting. We first formulate a result assuming that $\{\|\cdot\|_n\}_{n\in\N}$ is uniformly bounded, that is $\G^*_{\text{met, fin}} = \G^*_{\text{met}}$.

\begin{theorem} \label{thm:sim_b}
Suppose $\sup\{\|\cdot\|_n : n\in\N\}<\infty$ (then $\G^*_{\text{met, fin}} = \G^*_{\text{met}}$). The following conditions are equivalent.
\begin{enumerate}
\item $\G^*_{\text{met}} = \bigcup_{m\in\N} X_m$
\item For any countable infinite sequence of positive reals $(\varepsilon_0,\varepsilon_1,\ldots)\subset\R_{>0}$ there is $N\in\N$ such that
\begin{equation}\label{eq:c}
\G^*_{\text{met}} =  X_{0} \B_{\varepsilon_0}(e) \cup \ldots \cup X_{N} \B_{\varepsilon_N}(e),\text{ holds in }\G^*_{\text{met}},
\end{equation}
which is equivalent with the clause: for $\U$-almost all $n\in\N$
\begin{equation}\label{eq:compp}
G_n =  X_{0,n} \B_{\varepsilon_0}(e) \cup \ldots \cup X_{N,n} \B_{\varepsilon_N}(e),\text{ holds in }G_n.
\end{equation}
\end{enumerate}
\end{theorem}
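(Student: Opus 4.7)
The plan is to treat the theorem as three separate pieces: (a) the internal Łoś-style transfer between the $\G^*_{\text{met}}$-level and $G_n$-level formulations of (2); (b) the easy implication (2)$\Rightarrow$(1); and (c) the main implication (1)$\Rightarrow$(2). The bounded hypothesis ensures $\G^*_{\text{met, fin}} = \G^*_{\text{met}}$, so by Lemma \ref{lem:inter} every $X_m$ is a closed subset of the complete metric space $(\G^*_{\text{met}}, \|\cdot\|)$ given by Lemma \ref{lem:propp}.

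For the transfer inside (2), the direction $\Rightarrow$ is straightforward: if the coordinate covering fails on a set in $\U$, pick witnesses $g_n$ and form $\bar g = (g_n)/\E$, which cannot lie in any $X_i \B_{\varepsilon_i}(e)$. The reverse direction uses pigeonhole: a generic $\bar g$ decomposes coordinatewise as $g_n = x_n b_n$ with $x_n \in X_{i_n, n}$, $\|b_n\|_n < \varepsilon_{i_n}$, $i_n \in \{0, \dots, N\}$; since $\{0, \dots, N\}$ is finite, some $i^*$ captures a set in $\U$, yielding $\bar g \in X_{i^*} \B_{\leq \varepsilon_{i^*}}(e)$. The open-versus-closed ball mismatch (a $\U$-limit of strict inequalities is only $\leq$) is absorbed by the ``for any sequence'' quantifier in (2): replacing $(\varepsilon_i)$ by $(\varepsilon_i / 2)$ on the $G_n$ side produces a cover with $\B_{\leq \varepsilon_i /2} \subseteq \B_{\varepsilon_i}$ on the $\G^*_{\text{met}}$ side.

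For (2)$\Rightarrow$(1) I would argue by contradiction. If some $\bar g \in \G^*_{\text{met}}$ lies in no $X_m$, closedness of each $X_m$ yields $\eta_m := \inf_{\bar x \in X_m} \|\bar x^{-1} \bar g\| > 0$, and by the very definition of $X_m \B_{\eta_m}(e)$ one has $\bar g \notin X_m \B_{\eta_m}(e)$ for every $m$. Applying (2) to the sequence $(\eta_m)_{m \in \N}$ gives an $N$ with $\G^*_{\text{met}} = X_0 \B_{\eta_0}(e) \cup \dots \cup X_N \B_{\eta_N}(e)$, directly contradicting the previous sentence.

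The main work is (1)$\Rightarrow$(2), to be proved by a diagonal construction. Suppose (2) fails for some sequence $(\varepsilon_i)$: for every $N$, $\G^*_{\text{met}} \neq X_0 \B_{\varepsilon_0}(e) \cup \dots \cup X_N \B_{\varepsilon_N}(e)$. By the transfer above, $A_N := \{n \in \N : G_n \neq X_{0,n} \B_{\varepsilon_0}(e) \cup \dots \cup X_{N,n} \B_{\varepsilon_N}(e)\} \in \U$ for every $N$, and these sets are nested $A_0 \supseteq A_1 \supseteq \cdots$. For $n \in A_0$ set $\nu(n) := \sup\{N : n \in A_N\} \in \N \cup \{\infty\}$ and pick $g_n \in G_n \setminus \bigcup_{i \leq \min(\nu(n), n)} X_{i, n} \B_{\varepsilon_i}(e)$, which is nonempty because $n \in A_{\min(\nu(n), n)}$ in both the finite and infinite cases; set $g_n = e$ for $n \notin A_0$. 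The bounded hypothesis is crucial here: it guarantees $\bar g = (g_n)_{n \in \N}/\E$ actually lies in $\G^*_{\text{met}}$. For each fixed $m$, every $n \in A_m \setminus \{0, \dots, m-1\}$ satisfies $\min(\nu(n), n) \geq m$, hence $g_n \notin X_{m, n} \B_{\varepsilon_m}(e) \supseteq X_{m, n}$; since $A_m \setminus \{0, \dots, m-1\} \in \U$, we conclude $\bar g \notin X_m$ for every $m$, contradicting (1). The principal obstacle is exactly this diagonal: because $\U$ is not countably complete one cannot hope that $\bigcap_N A_N \in \U$, so the construction must split between $n$ with finite and infinite $\nu(n)$; the choice $\min(\nu(n), n)$ handles both uniformly and is what makes the pointwise avoidance transfer back to the ultraproduct.
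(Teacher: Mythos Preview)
Your argument is correct and follows the same approach as the paper (which actually proves the general unbounded Theorem \ref{thm:sim2} and obtains this as a specialization); your diagonal via $\min(\nu(n),n)$ is equivalent to the paper's device of replacing $U_k$ by $U_k\setminus\{0,\dots,k\}$ so that $\bigcap_k U_k=\emptyset$ and the maximal index $k_n$ is always finite. One phrasing point in your last line of (1)$\Rightarrow$(2): the conclusion $\bar g\notin X_m$ does \emph{not} follow from $g_n\notin X_{m,n}$ on a $\U$-large set (that inference is only valid for the discrete ultraproduct, since $X_m=\prod_n X_{m,n}/\E$ is a metric quotient), so the ``$\supseteq X_{m,n}$'' aside is misleading; the correct justification is the stronger fact you already have, $g_n\notin X_{m,n}\B_{\varepsilon_m}(e)$, which gives distance at least $\varepsilon_m$ in the $\U$-limit and hence $\bar g\notin X_m$---exactly as the paper argues.
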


We give a general version of our compactness theorem (unbounded case).

\begin{theorem} \label{thm:sim2}
The following conditions are equivalent.
\begin{enumerate}
\item $\G^*_{\text{met, fin}} \subseteq \bigcup_{m\in\N} X_m$

\item For every $t>0$ and any infinite sequence of positive reals $(\varepsilon_0,\varepsilon_1,\ldots)\subset\R_{>0}$ there is $N\in\N$ such that $(\varepsilon_0,\varepsilon_1,\ldots,\varepsilon_N)$ has the following property (\ref{eq:comp}): for $\U$-almost all $n\in\N$
\begin{equation}\label{eq:comp}
\B_{t}(e)\subseteq X_{0,n} \B_{\varepsilon_0}(e) \cup \ldots \cup X_{N,n} \B_{\varepsilon_N}(e)
\text{ holds in }G_n.
\end{equation}
\item For every $t>0$ and any infinite sequence of positive reals $(\varepsilon_0,\varepsilon_1,\ldots)\subset\R_{>0}$ there is $N\in\N$ such that
\begin{equation}\label{eq:co}
\B_{t}(e)\subseteq X_{0} \B_{\varepsilon_0}(e) \cup \ldots \cup X_{N} \B_{\varepsilon_N}(e)
\text{ holds in }\G^*_{\text{met, fin}}.
\end{equation}
\end{enumerate}
\end{theorem}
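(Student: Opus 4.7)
The plan is to establish the chain of implications $(1) \Rightarrow (2) \Rightarrow (3) \Rightarrow (1)$. Roughly, $(2) \Leftrightarrow (3)$ is a pigeonhole-style {\L}o{\'s} transfer, while the passage to and from $(1)$ uses a diagonal construction in one direction and the closedness of metrically internal sets (Lemma \ref{lem:inter}) in the other.

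For $(1) \Rightarrow (2)$, I would argue by contradiction. Assume $(2)$ fails with witnesses $t > 0$ and $(\varepsilon_i)_{i \in \N}$, and set
\[
U_N = \left\{ n \in \N : \B_{t}(e) \not\subseteq X_{0,n}\B_{\varepsilon_0}(e) \cup \cdots \cup X_{N,n}\B_{\varepsilon_N}(e) \text{ in } G_n \right\}.
\]
Then each $U_N \in \U$, and $U_N \supseteq U_{N+1}$. Since $\U$ is free, the thinned sets $V_N = U_N \setminus \{0,1,\ldots,N\}$ remain in $\U$, are decreasing, and satisfy $\bigcap_N V_N = \emptyset$. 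For each $n \in V_0$ set $N(n) = \max\{N : n \in V_N\}$ and choose $g_n \in \B_t(e)$ outside the corresponding $N(n)$-fold cover; for the remaining $n$ put $g_n = e$. Then $\bar g = (g_n)/\E$ satisfies $\|\bar g\| \le t$, so $\bar g \in \G^*_{\text{met, fin}}$. If condition $(1)$ gave $\bar g \in X_m$, one could pick $x_n \in X_{m,n}$ with $\|g_n x_n^{-1}\|_n \to 0$ along $\U$; the intersection of $V_{m+1}$ with $\{n : \|g_n x_n^{-1}\|_n < \varepsilon_m\}$ would still lie in $\U$, hence be non-empty, and for any such $n$ bi-invariance would yield $g_n = x_n \cdot (g_n x_n^{-1})^{x_n} \in X_{m,n}\B_{\varepsilon_m}(e)$, contradicting the choice of $g_n$ at level $N(n) \ge m+1$.

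For $(2) \Rightarrow (3)$, given $\bar g = (g_n)/\E \in \B_t(e) \cap \G^*_{\text{met, fin}}$, I would apply $(2)$ to the shifted sequence $(\varepsilon_i/2)_{i \in \N}$. Intersecting the $\U$-set $\{n : \|g_n\|_n < t\}$ with the $\U$-set provided by $(2)$ shows $g_n \in X_{i_n,n}\B_{\varepsilon_{i_n}/2}(e)$ for $\U$-many $n$ with $i_n \le N$. Pigeonhole over the finite set $\{0,\ldots,N\}$ pins down one index $i$ with $\{n : i_n = i\} \in \U$, and passing to the ultraproduct gives $\bar g = \bar x \bar y$ with $\bar x \in X_i$ and $\|\bar y\| \le \varepsilon_i/2 < \varepsilon_i$; the slack factor $1/2$ is exactly what converts the non-strict ultraproduct limit into the strict inequality required. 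For $(3) \Rightarrow (1)$ I would use Lemma \ref{lem:inter}: if some $\bar g \in \G^*_{\text{met, fin}}$ lay outside every $X_m$, then since each $X_m \cap \G^*_{\text{met, fin}}$ is closed, $\delta_m := d(\bar g, X_m \cap \G^*_{\text{met, fin}})$ is strictly positive. Picking $t > \|\bar g\|$ and $\varepsilon_i = \delta_i/2$, condition $(3)$ would produce $i \le N$ with $\bar g = \bar x \bar y$, $\bar x \in X_i$, $\|\bar y\| < \delta_i/2$; since $\|\bar x\| \le \|\bar g\| + \|\bar y\| < \infty$, we would get $\bar x \in X_i \cap \G^*_{\text{met, fin}}$ and, by bi-invariance, $d(\bar g, \bar x) = \|\bar y\| < \delta_i$, contradicting the definition of $\delta_i$.

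The main obstacle I expect is the diagonal construction in $(1) \Rightarrow (2)$: one must arrange the thinning $V_N$ so that the resulting $\bar g$ lies in $\G^*_{\text{met, fin}}$ and simultaneously avoids every $X_m$, and then carefully convert the abstract membership $\bar g \in X_m$ in the ultraproduct into a coordinate equation $g_n \in X_{m,n}\B_{\varepsilon_m}(e)$ via the bi-invariance trick $(g_n x_n^{-1})^{x_n}$. It is this last step that lets the strict norm inequalities survive the transfer from $\G^*_{\text{met}}$ back to the factors $G_n$, and it is also where the distinction between $X_m$ and its neighbourhood $X_m \B_{\varepsilon_m}(e)$ matters most.
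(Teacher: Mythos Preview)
Your argument is correct and follows essentially the same route as the paper's: the same cycle $(1)\Rightarrow(2)\Rightarrow(3)\Rightarrow(1)$, the same diagonal construction of a bad element in $(1)\Rightarrow(2)$ via thinning the sets $U_N$ to have empty intersection, and the same appeal to Lemma~\ref{lem:inter} (closedness of metrically internal sets) for $(3)\Rightarrow(1)$. Your $(2)\Rightarrow(3)$ is spelled out with an explicit pigeonhole and the $\varepsilon_i/2$ slack, whereas the paper declares this step immediate from the definitions; otherwise the arguments match.
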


\begin{proof}
$(1)\Rightarrow(2)$ 
Assume that (2) is not true. Let $t>0$ and $(\varepsilon_m)_{m\in\N}$ be  counterexamples. For any $N \in \N$ define 
\begin{equation}\label{def:uu}
U_N=\{n \in \N: (\varepsilon_0, \varepsilon_1, \ldots, \varepsilon_N) \text{ fails to satisfy (\ref{eq:comp}) in } G_n \} \in \U.
\end{equation}
For any $k\in \N$ and $n \in U_k$ let $g_{k,n}\in G_n$ be such that $\|g_{k,n}\|_n < t$ and
\begin{equation}\label{eq:gg}
g_{k,n} \not \in X_{0,n} \B_{\varepsilon_0}(e) \cup \ldots \cup X_{k,n} \B_{\varepsilon_k}(e) \text{ in }G_n.
\end{equation}

Clearly $U_0\supseteq U_1\supseteq U_2\supseteq\ldots$. We may assume that $\bigcap \{U_k: k\in \N \}=\emptyset$ (just replace $U_k$ by $U_k\setminus \{0,1,\ldots,k\}$, which is still in $\U$). For each $n \in U_0$ let $k_n\in\N$ be the greatest $k\in\N$ such that $n \in U_k$, that is $n\in U_{k_n}$, which means that $(\varepsilon_0, \varepsilon_1, \ldots, \varepsilon_{k_n})$ fails to satisfy (\ref{eq:comp}) in $G_n$.

Then $\lim_{n\to\U}k_n=\infty$. Define
\[g = (g_{k_n,n})_{n\in\N}/\E\in \G^*_{\text{met}}.\]
Then $\|g\|\le t$, so $g\in \G^*_{\text{met, fin}}$. However $g\not\in \bigcup_{m\in\N} X_m$, which gives the contradiction with (1). Indeed, we prove that $g\not\in X_m$ for every $m\in\N$. (\ref{def:uu}) and (\ref{eq:gg}) imply that \[g_{k_n,n}\not\in X_{m,n}\B_{\varepsilon_m}(e)\] holds for all $n\in\N$ such that $k_n>m$. Therefore the distance between $g$ and $X_m$ is at least $\varepsilon_m>0$, so $g\not\in X_m$.

$(2)\Rightarrow (3)$ is immediate by Definitions \ref{def:inff} and \ref{def:fin}.

$(3)\Rightarrow (1)$ Assume (1) fails. Then there is a non-trivial $g=(g_m)_{m\in\N}/\E \in \G^*_{\text{met, fin}}$ such that $g\not\in \bigcup_{m\in\N} X_m$. This means that $\left\|g y^{-1}\right\| > 0$, for all $y\in X_m$. Since each $X_m$ is a closed set (by Lemma \ref{lem:inter}), there is $\varepsilon>0$ such that $\left\|g y^{-1}\right\| > \varepsilon$ for all $y\in X_m$, $m\in\N$. Hence define \[\varepsilon_m = \inf\left\{\left\|g y^{-1}\right\| : y\in X_m \right\}>0.\] 
Then (3) fails for $t:=\|g\|+1>0$ and $(\varepsilon_m)_{m\in\N}$. Indeed, suppose that $(\varepsilon_0, \varepsilon_1, \ldots, \varepsilon_N)$ has property (\ref{eq:co}) for some $N\in\N$. Then $\left\|g y^{-1}\right\|< \varepsilon_m$, for some $m\le N$ and $y\in X_m$, contradiction.
\end{proof}

There is a further generalization of Theorem \ref{thm:sim2}, where instead of $\G^*_{\text{met, fin}}$ one can put any metrically internal set $Y$. That is, one can characterize the situation when $Y\cap \G^*_{\text{met, fin}}$ is covered by a countably many metrically internal sets as a property of $\U$-almost all coordinates.

We do not need this stronger version of \ref{thm:sim2}, hence we only state this in Remark \ref{rem:gen} below. The proof of Remark \ref{rem:gen} can be easily derived from the proof of Theorem \ref{thm:sim2}.

\begin{remark} \label{rem:gen}
Suppose $Y =  \frac{Y_{0}\times Y_{1}\times Y_{2}\times\cdots}{\E}$ is a metrically internal subset of $\G^*_{\text{met}}$. The following conditions are equivalent under the notation from Theorem \ref{thm:sim2}:
\begin{enumerate}
\item $Y\cap \G^*_{\text{met, fin}} \subseteq \bigcup_{m\in\N} X_m$

\item For every $t>0$ and any infinite sequence of positive reals $(\varepsilon_0,\varepsilon_1,\ldots)\subset\R_{>0}$ there is $N\in\N$ such that $(\varepsilon_0,\varepsilon_1,\ldots,\varepsilon_N)$ has the following property (\ref{eq:comppp}) in $G_n$, for $\U$-almost all $n\in\N$:
\begin{equation}\label{eq:comppp}
Y_n\cap \B_{t}(e)\subseteq X_{0,n} \B_{\varepsilon_0}(e) \cup \ldots \cup X_{N,n} \B_{\varepsilon_N}(e)
\text{ holds in }G_n.
\end{equation}
\end{enumerate}
\end{remark}

\section{Corollaries of compactness theorem} 
\label{sec:baire}

We give a couple of consequences of our compactness theorems \ref{thm:sim_b} and \ref{thm:sim2}, mainly in the \emph{bounded case}, that is when:
\begin{equation}\label{bounded}
\sup\{\|\cdot\|_n : n\in\N\}<\infty.
\end{equation}
This condition implies that $\G^*_{\text{met, fin}} = \G^*_{\text{met}}$. We write explicitly \emph{bounded case}, when (\ref{bounded}) is assumed.

\subsection{Square of finite subcover}

Lemma \ref{lem:propp} (2) asserts that $(\G^*_{\text{met, fin}}, \|\cdot\|)$ is a complete metric space, so we can apply Baire category argument. We give another consequence of cover condition (1) from Theorem \ref{thm:sim_b}.
\begin{theorem}\label{thm:ccomp} (bounded case) 
If $\G^*_{\text{met}} = \bigcup_{m\in\N} X_m$, where each $X_m$ is metrically-internal, then there is $N\in\N$ such that
\begin{equation}\label{eq:22}
\G^*_{\text{met}} = \left(X_0 \cup\ldots\cup X_N \right)^2.
\end{equation}
\end{theorem}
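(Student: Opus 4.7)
My plan is to combine the Baire category theorem in the complete metric space $(\G^*_{\text{met}},\|\cdot\|)$ of Lemma~\ref{lem:propp}(2) with the new compactness theorem~\ref{thm:sim_b}.

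First I would reduce to an increasing family by setting $Y_m = X_0 \cup \cdots \cup X_m$. Each $Y_m$ is metrically internal (Remark~\ref{rem:clos}) and hence closed (Lemma~\ref{lem:inter}); the family is increasing and $\bigcup_m Y_m = \G^*_{\text{met}}$. Since in the bounded case $(\G^*_{\text{met}},\|\cdot\|)$ is a complete metric space, Baire category applied to this countable closed cover yields $M\in\N$, $g_0\in Y_M$ and $\varepsilon>0$ with $\B_\varepsilon(g_0)\subseteq Y_M$.

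Next I would shift this interior ball to be centered at the identity. Pick $M'\geq M$ with $g_0^{-1}\in Y_{M'}$, which exists because $\bigcup_m Y_m=\G^*_{\text{met}}$. By the bi-invariance of the norm, $g_0^{-1}\cdot \B_\varepsilon(g_0)=\B_\varepsilon(e)$, so
\[
\B_\varepsilon(e)\;\subseteq\; g_0^{-1}\cdot Y_{M'}\;\subseteq\; Y_{M'}\cdot Y_{M'}\;=\;Y_{M'}^{\,2},
\]
an \emph{exact} containment of a neighborhood of the identity inside $Y_{M'}^{\,2}$. I then apply Theorem~\ref{thm:sim_b}\,(1)$\Rightarrow$(2) to the cover $\bigcup_m Y_m=\G^*_{\text{met}}$ with the constant sequence $\varepsilon_m=\varepsilon$: there is $N'\geq M'$ with $\G^*_{\text{met}}=Y_{N'}\cdot \B(\varepsilon)$. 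Hence every $h\in\G^*_{\text{met}}$ has a factorisation $h=y\,b$ with $y\in Y_{N'}$ and $b\in\B(\varepsilon)\subseteq Y_{M'}^{\,2}$. Writing $b=a_1a_2$ with $a_1,a_2\in Y_{M'}$ gives $h=(ya_1)\cdot a_2$, and the problem reduces to showing that the single product $y\,a_1\in Y_{N'}\cdot Y_{M'}$ can be absorbed into some $Y_N$, making $h$ a product of only two factors from $Y_N=X_0\cup\cdots\cup X_N$.

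The main obstacle I anticipate is precisely this last absorption step. The naive combination produces $\G^*_{\text{met}}\subseteq Y_{N'}\cdot Y_{M'}^{\,2}\subseteq Y_N^{\,3}$, one factor too many, and although $Y_{N'}\cdot Y_{M'}$ is itself metrically internal and contained in $\bigcup_m Y_m$, a further application of compactness only yields $Y_{N'}\cdot Y_{M'}\subseteq Y_{N''}\cdot\B(\delta)$ for arbitrarily small $\delta>0$, not an exact inclusion. The trick I would pursue is to re-feed this approximation into the exact containment $\B_\varepsilon(e)\subseteq Y_{M'}^{\,2}$ and iterate, in the style of the open mapping theorem: each time an error of size $\delta_i$ appears, it is split via $Y_{M'}^{\,2}$ and the new error $\delta_{i+1}$ is taken much smaller, using completeness of $(\G^*_{\text{met}},\|\cdot\|)$ to pass to the limit and obtain an exact two-factor decomposition $h=y_1y_2$ with $y_1,y_2\in Y_N$ for a sufficiently large $N$.
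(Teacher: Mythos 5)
Your first half coincides with the paper's argument: Baire category in the complete space $(\G^*_{\text{met}},\|\cdot\|)$, closedness of the metrically internal sets via Lemma \ref{lem:inter}, and an application of Theorem \ref{thm:sim_b} with the constant sequence $(\varepsilon,\varepsilon,\ldots)$. The gap is exactly the absorption step you flag at the end, and the iteration you sketch does not close it. Once you write $\B_\varepsilon(e)\subseteq Y_{M'}^{\,2}$ by absorbing $g_0^{-1}$ into an internal set, every use of the covering $\G^*_{\text{met}}=Y_{N'}\B(\varepsilon)$ produces three factors, and there is no mechanism to collapse $y\,a_1$ into a single $Y_N$: the sets $Y_N=X_0\cup\cdots\cup X_N$ are not closed under products, so $y\,a_1$ only lies in some $Y_m$ with $m$ depending on the element, not uniformly. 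The open-mapping-style iteration fares no better: at each stage the compactness theorem forces you to enlarge the index ($\delta_{i+1}$ smaller means $N_{i+1}$ larger), nothing makes the successive approximants Cauchy, and even if they converged, a limit of elements of $Y_{N_i}$ with $N_i\to\infty$ need not belong to any fixed closed set $Y_N$. So the "pass to the limit" step has no justification, and the argument as proposed only yields $\G^*_{\text{met}}=Y_N^{\,3}$.

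The paper avoids this by never absorbing $g_0^{-1}$ into an internal set. From $\B_\varepsilon(g)\subseteq X_m$ one keeps the exact inclusion $\B_\varepsilon(e)\subseteq X_m\,g^{-1}$ with the group element $g^{-1}$ left outside. Then Theorem \ref{thm:sim_b} with the constant sequence gives
\begin{equation*}
\G^*_{\text{met}} \;=\; \bigcup_{i\le N'} X_i\,\B_\varepsilon(e) \;\subseteq\; \left(X_0\cup\cdots\cup X_{N'}\right)\cdot X_m\cdot g^{-1},
\end{equation*}
and the trailing $g^{-1}$ is removed for free by translating on the right by $g$: since the left-hand side is the whole group, $\G^*_{\text{met}}\,g=\G^*_{\text{met}}$, hence $\G^*_{\text{met}}=\left(X_0\cup\cdots\cup X_{N'}\right)\cdot X_m\subseteq\left(X_0\cup\cdots\cup X_N\right)^2$ with $N=\max\{N',m\}$. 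That single observation, that an exact right translate by one group element costs nothing when the set being translated is all of $\G^*_{\text{met}}$, is what replaces your iteration and yields the two-factor conclusion directly.
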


Let us observe that (\ref{eq:22}) from Theorem \ref{thm:ccomp} cannot be simplified to apparently simpler condition $\G^*_{\text{met}} \subseteq X_1 \cup\ldots\cup X_N$, as shown in Example \ref{ex:cover}.

\begin{proof} 
$(\G^*_{\text{met}}, \|\cdot\|)$ is a complete metric space by Lemma \ref{lem:propp} (2). Each $X_m\subseteq  \G^*_{\text{met}}$ is closed subset by Lemma \ref{lem:inter}. By Baire category theorem, some $X_m$ has a non-empty interior, that is there is $m\in\N$, $\varepsilon>0$ and $g\in X_m$ such that \begin{equation}
    \B_\varepsilon(g) \subseteq X_m,\text{ hence }\B_\varepsilon(e)\subseteq X_m g^{-1}.
\end{equation}
By applying Theorem \ref{thm:sim_b} to $(\varepsilon,\varepsilon,\varepsilon,\ldots)$, we get $N'\in\N$ such that  
\begin{equation}\label{eq:cc}
\G^*_{\text{met}} =  X_{0} \B_{\varepsilon}(e) \cup \ldots \cup X_{N'} \B_{\varepsilon}(e)=\left(X_0 \cup\ldots\cup X_{N'} \right)\cdot X_m\cdot g^{-1}.
\end{equation}
Whence $\G^*_{\text{met}} =\left(X_0 \cup\ldots\cup X_{N'} \right)\cdot X_m$. The conclusion is true for $N=\max\{N',m\}$.
\end{proof}

\subsection{Bounded generation, torsion, perfectness and simplicity}

Standard (discrete) ultraproduct $\G^*$ is \emph{saturated} in model-theoretic sense. This fact has many immediate corollaries around \emph{uniform group properties}. Let us remind that by a \emph{commutator} $\left[g,h\right]$ we mean $g^{-1}h^{-1}gh$. A group $G$ is \emph{perfect}, if every element of $G$ is a product of commutators. Here are well known facts about standard ultraproduct $\G^*$:
\begin{itemize}
    \item if $\G^*$ is a perfect group, then $\G^*$ is \emph{uniformly perfect} (Corollary \ref{cor:perf});
    
    \item if $\G^*$ is a simple group, then $\G^*$ must be \emph{uniformly simple} (Definition \ref{def:bsim} (2));
    
    \item if $\G^*$ is a torsion group (i.e. for every element $g\in \G^*$ there is $N\in\N$, such that $g^N=e$), then $\G^*$ is \emph{uniformly torsion} (also called \emph{of finite exponent}), that is there is $N\in\N$ such that $g^N=e$, for all $g\in \G^*$.
\end{itemize}

   We derive below generalization some of these facts to metric ultraproduct $\G^*_{\text{met}}$. 
   
   \subsubsection{Bounded generation}\label{sec:bg}
   
We first give a general fact on \emph{bounded generation}.

\begin{lemma}\label{lem:gen} (bounded case)
Suppose $X\subseteq \G^*_{\text{met}}$ is a metrically internal subset. If $X$ generates $\G^*_{\text{met}}$, then $X$ generates in finitely many steps, i.e. there is $N\in\N$ such that \[\G^*_{\text{met}} = \left(X\cup X^{-1}\right)^{N}.\]
\end{lemma}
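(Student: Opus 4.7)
The plan is to apply Theorem \ref{thm:ccomp} to the ascending sequence of metrically internal sets $Y_m := \bigcup_{k=0}^{m} (X \cup X^{-1})^k$, with the convention $(X \cup X^{-1})^0 = \{e\}$. First I would verify that each $Y_m$ is metrically internal: writing $X = \prod_n X_n/\E$, coordinatewise inversion yields $X^{-1} = \prod_n X_n^{-1}/\E$, which is again metrically internal, and Remark \ref{rem:clos} ensures that finite unions and finite products of metrically internal sets remain metrically internal. Consequently every finite power $(X \cup X^{-1})^k$, and therefore every cumulative union $Y_m$, is metrically internal.

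The hypothesis that $X$ generates $\G^*_{\text{met}}$ says precisely that every element is a finite word in $X \cup X^{-1}$, which gives $\G^*_{\text{met}} = \bigcup_{m \in \N} Y_m$. Since we are in the bounded case, Theorem \ref{thm:ccomp} applies to the increasing internal family $\{Y_m\}_{m\in\N}$ and produces an $N \in \N$ with
\[
\G^*_{\text{met}} = Y_N^2 = \bigcup_{k=0}^{2N} (X \cup X^{-1})^k,
\]
so every element of $\G^*_{\text{met}}$ is a product of at most $2N$ elements of $X \cup X^{-1}$, which is the desired bounded generation (with the constant $2N$ in place of $N$).

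There is essentially no obstacle here beyond the bookkeeping just described, since the compactness theorem already does the heavy lifting. The only cosmetic point worth noting is the reading of $(X \cup X^{-1})^N$ in the conclusion: if it means "products of length at most $N$", we are done. If one insists on length exactly $N$, observe that $X$ must be non-empty (else it cannot generate), so $e = x x^{-1} \in (X \cup X^{-1})^2$ and both the even-length and odd-length powers form increasing chains. Replacing $X$ by the still metrically internal generating set $X \cup \{e\}$ (the singleton $\{e\}$ is trivially internal and stable under the relevant closure operations) then collapses $(X \cup X^{-1})^{\le 2N}$ to a single power $(X \cup X^{-1})^{2N}$, which is what the statement literally asserts.
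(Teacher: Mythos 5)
Your proof is correct and follows essentially the same route as the paper: the paper's own argument simply sets $X_m=(X\cup X^{-1})^m$, notes $\G^*_{\text{met}}=\bigcup_m X_m$, and invokes Theorem \ref{thm:ccomp}, exactly as you do with the cumulative unions $Y_m$. Your extra bookkeeping (internality of $X^{-1}$, of unions and products via Remark \ref{rem:clos}, and the reading of $(X\cup X^{-1})^N$ as words of length at most $N$) only makes explicit what the paper leaves implicit.
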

\begin{proof}
Define $X_m = \left(X\cup X^{-1}\right)^{m}$. Then $\G^*_{\text{met}} = \bigcup_{m\in\N} X_m$, so the conclusion follows from Theorem \ref{thm:ccomp}.
\end{proof}

We now derive a couple of corollaries. 

\subsubsection{Uniform perfectness}\label{sec:perf}

An immediate consequence of Lemma \ref{lem:gen} is the following corollary.

\begin{corollary}\label{cor:perf} (bounded case)
Suppose $\G^*_{\text{met}}$ is perfect, then $\G^*_{\text{met}}$ is uniformly perfect, i.e. there is $N\in\N$ such that every element of $\G^*_{\text{met}}$ is a product of $N$ commutators.
\end{corollary}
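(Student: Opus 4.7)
The plan is to apply Lemma \ref{lem:gen} directly to the set of commutators. Let $X = \left\{[g,h] : g,h\in \G^*_{\text{met}}\right\}$ be the set of all commutators in $\G^*_{\text{met}}$. By Example \ref{ex:internal}(\ref{ex:comm}), $X$ is a metrically internal subset of $\G^*_{\text{met}}$, since it coincides with $\frac{\prod_{n\in\N}\{[g,h] : g,h\in G_n\}}{\E}$. Moreover, $X$ is symmetric: $[g,h]^{-1} = [h,g]\in X$, so $X = X^{-1}$.

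Next, I would use the hypothesis that $\G^*_{\text{met}}$ is perfect to conclude that $X$ generates $\G^*_{\text{met}}$: perfectness is precisely the statement that every element of $\G^*_{\text{met}}$ lies in the subgroup $\langle X\rangle$. Since we are in the bounded case, Lemma \ref{lem:gen} applies and yields some $N\in\N$ with
\[
\G^*_{\text{met}} = \left(X\cup X^{-1}\right)^N = X^N.
\]
This exactly says that every element of $\G^*_{\text{met}}$ is a product of $N$ commutators, which is the definition of uniform perfectness.

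The whole argument is a one-line deduction from Lemma \ref{lem:gen}, so there is no genuine obstacle at this stage; the real work has already been done in proving the compactness theorem \ref{thm:sim_b}, its consequence \ref{thm:ccomp}, and Lemma \ref{lem:gen} itself via Baire category. The only small points to verify are the (trivial) internality and symmetry of the commutator set. Note also that the bounded assumption is essential here, because it is exactly what guarantees $\G^*_{\text{met, fin}} = \G^*_{\text{met}}$ and hence that Lemma \ref{lem:gen} covers the whole ultraproduct rather than only its finitary part.
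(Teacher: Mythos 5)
Your proposal is correct and follows the paper's own argument: both proofs simply note that the commutator set $X=\left\{[\bar g,\bar h] : \bar g,\bar h\in \G^*_{\text{met}}\right\}$ is metrically internal by Example \ref{ex:internal}(\ref{ex:comm}) and apply Lemma \ref{lem:gen} (with the symmetry $X=X^{-1}$ being an easy observation, and $e\in X$ taking care of ``at most $N$'' versus ``exactly $N$'' commutators). There is nothing genuinely different between the two arguments.
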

\begin{proof}
The conclusion follows from Lemma \ref{lem:gen}, as by Example \ref{ex:internal} (\ref{ex:comm}), $X=\left\{\left[\bar{g}, \bar{h}\right] : \bar{g},\bar{h}\in \G^*_{\text{met}}\right\}$ is metrically internal.
\end{proof}

Obviously if $\G^*$ is a perfect group, then so is $\G^*_{\text{met}}$, as $\G^*_{\text{met}}$ is a homomorphic image of $\G^*$ (Remark \ref{rem:xxx} (2)). The converse is not true in general, there is non-perfect $\G^*$ with perfect $\G^*_{\text{met}}$, see Example \ref{ex:dos}. Therefore Corollary \ref{cor:perf} cannot be obtained by using only classical compactness theorem \ref{comp:st} applied to $\G^*$.
 

\subsubsection{Uniform torsion}\label{sec:tors}

Let us now consider torsion groups.  

A well known fact of standard ultraproduct $\G^*$ is: $\G^*$ is torsion if and only if $\G^*$ is uniformly torsion. We conjecture that the same is true for any metric ultraproduct $\G^*_{\text{met}}$.

\begin{conjecture}\label{con:tor}
If $\G^*_{\text{met}}$ is a torsion group, then $\G^*_{\text{met}}$ has finite exponent (i.e. is uniformly torsion).
\end{conjecture}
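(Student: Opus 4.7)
The plan is to combine our new compactness theorem with a Baire category argument. Throughout I shall work in the bounded case $\sup_n \|\cdot\|_n < \infty$, consistent with the rest of Section \ref{sec:tors}; the general case would have to be treated through $\G^*_{\text{met, fin}}$ and Theorem \ref{thm:sim2}, and is not obviously within reach because infinite-norm elements sit outside our complete metric space. The torsion hypothesis reads
\[\G^*_{\text{met}} = \bigcup_{N\in\N} T_N, \qquad T_N := \{g \in \G^*_{\text{met}} : g^N = e\},\]
and the goal is to produce some $M$ with $\G^*_{\text{met}} = T_M$. Note that $T_N$ itself need not be metrically internal, as Example \ref{ex:internal}(\ref{torsion2}) warns, so Theorem \ref{thm:ccomp} cannot be applied to the $T_N$'s directly.

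First I would observe that each $T_N$ is closed in $\G^*_{\text{met}}$, since $g \mapsto \|g^N\|$ is continuous. Since $(\G^*_{\text{met}}, \|\cdot\|)$ is complete by Lemma \ref{lem:propp}(2), Baire category produces some $M$ together with $g_0 \in T_M$ and $\varepsilon > 0$ such that $\B_\varepsilon(g_0) \subseteq T_M$. The conjugation expansion, using $g_0^M = e$ and the convention $h^g = g^{-1}hg$,
\[(g_0 h)^M = g_0^M \cdot h^{g_0^{M-1}} h^{g_0^{M-2}} \cdots h^{g_0} h = h^{g_0^{M-1}} h^{g_0^{M-2}} \cdots h^{g_0} h,\]
shows that every $h \in \B_\varepsilon(e)$ satisfies the word identity $h^{g_0^{M-1}} \cdots h^{g_0} h = e$. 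Bi-invariance of $\|\cdot\|$ together with conjugation-invariance of $T_M$ further gives $\B_\varepsilon(g_0^{h_0}) \subseteq T_M$ for every $h_0 \in \G^*_{\text{met}}$, so the same word identity holds uniformly as $g_0$ ranges over its whole conjugacy class. A parallel attack uses the metrically internal enlargements
\[X_{N,\varepsilon} = \frac{\prod_{n\in\N} \{g \in G_n : \|g^N\|_n \le \varepsilon\}}{\E} \;\supseteq\; T_N,\]
for which Theorem \ref{thm:ccomp} yields, for each $\varepsilon > 0$, an exponent $M_\varepsilon$ with $\G^*_{\text{met}} = (X_{0,\varepsilon} \cup \cdots \cup X_{M_\varepsilon,\varepsilon})^2$. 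This already sharpens Corollary \ref{cor:tors} by writing every element as a product of two $\varepsilon$-almost $M_\varepsilon$-torsion elements.

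The hard step, and in my view the reason the statement remains only a conjecture, is passing from these \emph{local} or \emph{almost} torsion identities to a \emph{global} bounded exponent. In the abelian case the conjugation identity collapses to $h^M = e$; since the metrically internal set $\prod_n \B_\varepsilon(e,G_n)/\E$ contains a neighbourhood of identity and hence generates $\G^*_{\text{met}}$ in boundedly many steps by Lemma \ref{lem:gen}, one concludes $\G^*_{\text{met}} = T_M$ at once. In the non-abelian case the word $h^{g_0^{M-1}} \cdots h^{g_0} h$ vanishing on a neighbourhood of $e$ does not force $h$ itself to be torsion of bounded order --- products of torsion elements can generate groups of arbitrarily large exponent, as for the infinite dihedral group --- and along the compactness route the bound $M_\varepsilon$ may \emph{a priori} diverge as $\varepsilon \to 0$, leaving us only with the almost-torsion conclusion of Corollary \ref{cor:tors}. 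Bridging this gap appears to require either an analogue of the restricted Burnside theorem available inside $\G^*_{\text{met}}$, or family-specific estimates on the growth of $M_\varepsilon$, neither of which seems extractable from the general compactness machinery developed so far.
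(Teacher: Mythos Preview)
The statement you are addressing is labelled a \emph{Conjecture} in the paper, and the authors do not claim to prove it; they say explicitly that they can only establish the $\varepsilon$-analogue, Corollary~\ref{cor:tors}. There is therefore no paper proof to compare against. Your write-up is likewise not a proof, and you say so: you set up the two natural attacks (Baire on the genuine torsion sets $T_N$, and Theorem~\ref{thm:ccomp} on the metrically internal almost-torsion sets $X_{N,\varepsilon}$), push each as far as it goes, and then identify the obstruction in the non-abelian case. That diagnosis is sound and matches the paper's own stance in leaving the statement open.

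One point does deserve correction. Your claim that the abelian case closes ``at once'' via Lemma~\ref{lem:gen} is too fast: the ball $\B_\varepsilon(e)$ need not generate $\G^*_{\text{met}}$ at all. For instance, if every $\|\cdot\|_n$ takes only the values $0$ and $1$, then so does the ultraproduct norm, and $\B_\varepsilon(e)=\{e\}$ for $\varepsilon<1$; in that regime $\G^*_{\text{met}}$ coincides with the discrete ultraproduct $\G^*$, where the conjecture is indeed true, but by classical saturation rather than by your argument. In the genuinely abelian case what you actually get is that $T_M$ is an open (hence closed) subgroup, so $\G^*_{\text{met}}/T_M$ is a discrete torsion abelian group --- but this quotient is not a metric ultraproduct in any obvious way, so neither Baire nor Theorem~\ref{thm:sim_b} applies to it directly, and concluding bounded exponent still requires a separate step. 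The abelian case may well be within reach, but it is not automatic from what you wrote.
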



We are able to prove an $\varepsilon$-analogue of Conjecture \ref{con:tor} for $\G^*_{\text{met}}$, provided that $\varepsilon>0$.

\begin{definition}\label{def:etors} Fix $\varepsilon > 0$ and metric group $(G,\|\cdot\|)$.
\begin{enumerate}
    \item $G$ is called \emph{$\varepsilon$-torsion}, if for every $g\in G$ there is $N\in\N$ such that $\|g^N\|< \varepsilon$. A stronger notion than (1) is (2):
    \item $G$ is called \emph{uniformly $\varepsilon$-torsion} (or \emph{$\varepsilon$-finite exponent}) if there is $N\in\N$, such that $\|g^N\|< \varepsilon$ for all $g\in G$. A bit weaker that (2), but still stronger that (1) is (3) below.
    \item $G$ is called \emph{almost uniformly $\varepsilon$-torsion} if there is $N\in\N$, such that for every $g\in G$ there is $m\le N$ with $\|g^m\|< \varepsilon$.
\end{enumerate}
\end{definition}

Let us apply Theorem \ref{thm:sim_b} together with Example \ref{ex:internal} (\ref{torsion}).

\begin{corollary}\label{cor:tors} (bounded case) Fix $\varepsilon>0$.
If $\G^*_{\text{met}}$ is $\varepsilon$-torsion group, then $G$ is  almost uniformly $2\varepsilon$-torsion.
\end{corollary}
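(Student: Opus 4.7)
The strategy is to apply our compactness theorem \ref{thm:sim_b} to the countable family of metrically internal sets
\[X_m \;=\; \frac{\prod_{n\in\N} T_{m,\varepsilon}(G_n)}{\E}, \qquad m\ge 1,\]
whose internal character is supplied by Example \ref{ex:internal}\,(\ref{torsion}). The assumption that $\G^*_{\text{met}}$ is $\varepsilon$-torsion asserts that every $g\in\G^*_{\text{met}}$ lies in $T_{m,\varepsilon}(\G^*_{\text{met}})$ for some $m\ge 1$; combining this with the inclusion $T_{m,\varepsilon}(\G^*_{\text{met}})\subseteq X_m$ from (\ref{eq:subset}), we obtain $\G^*_{\text{met}}=\bigcup_{m\ge 1}X_m$.

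I then feed the sequence of radii $\varepsilon_m:=\varepsilon/(2m)$ into Theorem \ref{thm:sim_b}. This yields an $N\in\N$ such that
\[\G^*_{\text{met}}\;=\;\bigcup_{m=1}^N X_m\cdot\B_{\varepsilon/(2m)}(e),\]
and, by the $\U$-almost-all clause of the same theorem, the parallel covering $G_n = \bigcup_{m=1}^N T_{m,\varepsilon}(G_n)\cdot\B_{\varepsilon/(2m)}(e)$ holds in $G_n$ for $\U$-almost all $n$.

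The remainder is a short norm estimate. Fix $g\in\G^*_{\text{met}}$ and write $g=hb$ with $h\in X_m$ and $\|b\|<\varepsilon/(2m)$, for some $m\le N$. Telescoping,
\[(hb)^m\;=\;h^m\cdot\bigl(h^{-(m-1)}bh^{m-1}\bigr)\bigl(h^{-(m-2)}bh^{m-2}\bigr)\cdots\bigl(h^{-1}bh\bigr)\cdot b,\]
so each of the $m$ conjugates of $b$ has norm $\|b\|$ by bi-invariance, while the definition of $X_m$ gives a representative $(h_n)$ with $\|h_n^m\|_n<\varepsilon$ for all $n$, whence $\|h^m\|=\lim_{n\to\U}\|h_n^m\|_n\le\varepsilon$. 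The triangle inequality now yields
\[\|g^m\|\;\le\;\|h^m\|+m\|b\|\;<\;\varepsilon+m\cdot\frac{\varepsilon}{2m}\;=\;\tfrac32\varepsilon\;<\;2\varepsilon,\]
which is precisely almost uniform $2\varepsilon$-torsion of $\G^*_{\text{met}}$ with uniform bound $N$. The analogous conclusion for $\U$-almost all $G_n$ follows by applying exactly the same estimate coordinatewise to the parallel covering above.

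There is no serious obstacle: the only genuine choice in the argument is the decay rate of the radii $\varepsilon_m$, which must be $o(1/m)$ so that the cumulative conjugation error $m\|b\|$ stays below $\varepsilon$. This budget of $\varepsilon$ for the error, combined with the inherent slack $\|h^m\|\le\varepsilon$ coming from taking a $\U$-limit of strict inequalities, is exactly what forces the constant $2\varepsilon$ (rather than $\varepsilon$) in the statement.
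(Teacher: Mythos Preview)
Your proof is correct and follows essentially the same route as the paper: define $X_m$ via the coordinate sets $T_{m,\varepsilon}(G_n)$, cover $\G^*_{\text{met}}$ by the $X_m$, apply Theorem~\ref{thm:sim_b} with radii shrinking like $1/m$, and finish with a telescoping norm estimate. The only cosmetic difference is that the paper uses radii $\varepsilon/m$ (yielding $\|g^m\|<2\varepsilon$ directly), whereas you take $\varepsilon/(2m)$ and land at $3\varepsilon/2$; your telescoping identity for $(hb)^m$ is also written out more carefully than the paper's shorthand.
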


Observe that a circle group $\S=(S^1,\cdot)$ is a non-torsion group, but $\varepsilon$-torsion for every $\varepsilon>0$. Moreover $\S^*_{\text{met}} = \S$ (by Example \ref{ex:compact}) is not uniformly $\varepsilon$-torsion, but almost uniformly $\varepsilon$-torsion, for every $\varepsilon>0$. Hence Corollary \ref{cor:tors} cannot be improved to uniform $\varepsilon$-torsion.  

\begin{proof} 
Let $X_m = \frac{\prod_{n\in\N} T_{m, \varepsilon}(G_n)}{\E}$, for $m\in\N_{>0}$ be metrically internal sets from (\ref{eq:tors}) in Example \ref{ex:internal} (\ref{torsion}). Then  $\G^*_{\text{met}} = \bigcup_{m\in\N_{>0}} X_m$, since $\G^*_{\text{met}}$ is $\varepsilon$-torsion. Let us apply (\ref{eq:c}) from Theorem \ref{thm:sim_b} to $(\varepsilon,\frac{\varepsilon}{2},\frac{\varepsilon}{3},\frac{\varepsilon}{4},\ldots)$. Then there is $N\in\N$ such that 
\[\G^*_{\text{met}} = \bigcup_{m=1}^N \B_{\frac{\varepsilon}{m}}(e) X_m.\] Take any $g\in \G^*_{\text{met}}$, then there is $m\le N$ and $b\in \B_{\frac{\varepsilon}{m}}(e)$, $x\in X_m$ such that $g=bx$. Then $\|g^m\|=\|(bx)^m\|=\|b^m x^m\|\le \|b^m\| + \|x^m\|< m\|b\| + \varepsilon< 2\varepsilon$. Hence $\|g^m\|<2\varepsilon$.
\end{proof}

\begin{remark}
Many examples of metric groups we consider do satisfy the following property:
\begin{equation}\label{zalozenie}
    \|g^n\|\le \|g\|, \text{ for any }g\in G,\ n\in\N.
\end{equation}
In particular (\ref{zalozenie}) holds for:
\begin{itemize}
    \item permutation groups with the Hamming norm $(S_n,\|\cdot\|_H)$ (Definition \ref{def:hamm}),
    \item finite groups with conjugacy length \cite[Lemma 2.5]{thoms}, \cite[Theorem 1.1]{liesha}, which is a pseudo-norm: \[\|g\|_c=\frac{\log\left(\left|g^G\right|\right)}{\log\left(|G|\right)},\]
    \item linear groups with the Jordan length: see (\ref{jordan}) in Section \ref{sec:lin}.
\end{itemize}
If a metric group $(G,\|\cdot\|)$ satisfies (\ref{zalozenie}), then almost uniform $\varepsilon$-torsion implies uniform {$\varepsilon$-torsion} from Definition \ref{def:etors}. Indeed, if $N$ satisfies (3) from Definition \ref{def:etors}, then $\left\|g^{N!}\right\|<\varepsilon$, for all $g\in G$.
\end{remark}


\subsubsection{Uniform and bounded simplicity} \label{sec:ub}

Let us consider \emph{simplicity} and related stronger properties.

\begin{definition} \label{def:bsim}
\begin{enumerate}
    \item A group $G$ is called \emph{boundedly simple} if for any $g \in G\setminus\{e\}$ there is a natural number $N$ such that 
    \begin{equation}\label{eq:cn}
    C_N(g,G) := \left(g^{G} \cup g^{-1 G}\right)^{\leq N} = G.
    \end{equation}
    In other words $C_N(g,G)$ is the set of all products of at most $N$ conjugates of $g$ and $g^{-1}$.
    \item $G$ is call \emph{uniformly simple} if there exist $N\in\N$ such that $C_N(g,G)=G$ for every $g\in G\setminus\{e\}$. We say then that $G$ is \emph{$N$-uniformly simple}.
\end{enumerate}
\end{definition}

Bounded simplicity appeared in the literature in many places (see \cite[Section 1]{gg}) and under different names, for example as \emph{bounded normal generation} \cite{dt}. It is a well know fact that discrete ultraproduct $\G^*$ is simple if and only if $\G^*$ is uniformly simple. This is not true for $\G^*_{\text{met}}$ (see Example \ref{ex:per}). In a metric case we need to switch to bounded simplicity.

\begin{corollary}\label{cor:bbsim} (bounded case)
$\G^*_{\text{met}}$ is simple if and only if $\G^*_{\text{met}}$ is boundedly simple.
\end{corollary}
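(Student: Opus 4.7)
The plan is to handle the two implications separately; the forward direction is a direct application of Theorem \ref{thm:ccomp}, while the converse is a routine group-theoretic fact that does not use the metric structure.

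For the converse $(\Leftarrow)$: assume $\G^*_{\text{met}}$ is boundedly simple and let $H$ be any nontrivial normal subgroup of $\G^*_{\text{met}}$. Pick $g \in H \setminus \{e\}$. Normality gives $g^{\G^*_{\text{met}}} \cup g^{-1 \G^*_{\text{met}}} \subseteq H$, and since $H$ is closed under products, $C_m(g, \G^*_{\text{met}}) \subseteq H$ for every $m \in \N$. By bounded simplicity some $N$ satisfies $C_N(g, \G^*_{\text{met}}) = \G^*_{\text{met}}$, forcing $H = \G^*_{\text{met}}$; hence $\G^*_{\text{met}}$ is simple.

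For $(\Rightarrow)$: fix $g \in \G^*_{\text{met}} \setminus \{e\}$ and set $X_m = C_m(g, \G^*_{\text{met}})$ for $m \in \N$. Each $X_m$ is metrically internal, because $g^{\G^*_{\text{met}}}$ and $g^{-1 \G^*_{\text{met}}}$ are metrically internal by Example \ref{ex:internal} (\ref{conj}), and unions and finite products of metrically internal sets remain metrically internal by Remark \ref{rem:clos}. Since $\G^*_{\text{met}}$ is simple, the normal subgroup generated by $g$ is all of $\G^*_{\text{met}}$, and that normal closure is exactly the ascending union $\bigcup_{m \in \N} X_m$. Thus $\G^*_{\text{met}}$ is covered by countably many metrically internal sets, which is the hypothesis of Theorem \ref{thm:ccomp}.

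Now Theorem \ref{thm:ccomp}, valid under the bounded assumption, produces $N \in \N$ with $\G^*_{\text{met}} = (X_0 \cup \cdots \cup X_N)^2$. Since $X_m \subseteq X_{m+1}$ for all $m$, the union collapses to $X_N$, and concatenating two products of length at most $N$ yields $X_N \cdot X_N \subseteq X_{2N}$. Therefore $\G^*_{\text{met}} = X_N^2 \subseteq C_{2N}(g, \G^*_{\text{met}})$, which is bounded simplicity with witness $2N$ for $g$. No step is genuinely hard here; the only subtlety worth flagging is that Theorem \ref{thm:ccomp} delivers a square of a finite union rather than a single $X_N$, but since the chain $(X_m)$ is ascending and $X_N^2 \subseteq X_{2N}$, this is absorbed cleanly into the final bound.
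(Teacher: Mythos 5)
Your proof is correct and follows essentially the same route as the paper: the paper deduces the forward direction from Lemma \ref{lem:gen} (bounded generation of a metrically internal generating set), whose proof is exactly your application of Theorem \ref{thm:ccomp} to the ascending internal sets $C_m(g,\G^*_{\text{met}})$, with the same absorption of the square into a doubled bound. You merely inline that lemma and spell out the trivial converse, which the paper leaves implicit.
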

\begin{proof}
By Example \ref{ex:internal} (\ref{conj}) every nontrivial conjugacy class $g^{\G^*_{\text{met}}}$ is metrically internal and generates $\G^*_{\text{met}}$ (since $\G^*_{\text{met}}$ is simple). The conclusion follows from Lemma \ref{lem:gen}.
\end{proof}

A further study of simplicity $\G^*_{\text{met, fin}}$, also in unbounded case (under some mild Assumption \ref{eq:ass}) is contained in Section \ref{sec:sim}.

\section{Simplicity} \label{sec:sim}

This section is devoted the general study of $\G^*_{\text{met, fin}}$. We do not longer assume that norms are uniformly bounded. We extend Corollary \ref{cor:bbsim} and give a general criterion for simplicity of $\G^*_{\text{met, fin}}$, under an Assumption \ref{eq:ass} below.

Let $\G = (G_n,\|\cdot\|_n)_{n\in \N}$ be a family of metric groups. We need to assume the following condition (Assumption \ref{eq:ass}), in order to give a smooth criterion for simplicity of metric ultraproducts of metric groups. 
This assumption is clearly satisfied when $\{\|\cdot\|_n\}_{n\in\N}$ are uniformly bounded; that is when $\sup\left\{\|g\|_n : n\in\N, g\in G_n\right\}<\infty$. However, there are many important family of unbounded metric groups which do satisfy (\ref{eq:ass}). Intuitively (\ref{eq:ass}) says that conjugacy class of any $g\in \bigcup_{n\in\N} G_n$ can be determined by uniformly $\|\cdot\|$-short elements.


\begin{ass}\label{eq:ass}
There is a non-decreasing function $F_\G\colon \R_{>0} \to \R_{>0}$ for a family of pseudo-metric groups $\G=(G_n,\|\cdot\|_n)_{n\in \N}$ such that, for every $n\in\N$ and any $g\in G_n$, 
\[g^{G_n} = \left\{ h^{-1}g h : h\in G_n,\ \|h\|_n< F_\G\left(\left\|g\right\|_n\right) \right\}.\]
\end{ass}

If norms are uniformly bounded, then Assumption \ref{eq:ass} is true.

We use the following immediate application of Assumption \ref{eq:ass}, which says that  products of conjugacy classes of elements from $\G^*_{\text{met, fin}}$ computed in a bigger group $\G^*_{\text{met}}$ behaves well after intersecting them with $\G^*_{\text{met, fin}}$.

\begin{fact}\label{fact:ass}
Under Assumption \ref{eq:ass} the following holds: for any $\bar{g}\in \G^*_{\text{met, fin}}$ and any $n\in\N$
\[\bar{g}^{\G^*_{\text{met}}} = \bar{g}^{\G^*_{\text{met, fin}}},\ \ C_n\left(\bar{g}, \G^*_{\text{met}}\right) = C_n\left(\bar{g}, \G^*_{\text{met, fin}}\right)\] 
(where $C_n(g,G) = \left(g^G\cup {g^{-1}}^G\right)^{\leq n}$ for $n\geq 1$, and $C_0(g,G) =\{e\}$, according to (\ref{eq:cn})).
Therefore $C_n\left(\bar{g}, \G^*_{\text{met, fin}}\right)$ is a closed subset of $\G^*_{\text{met, fin}}$ by Example \ref{ex:internal} and Lemma \ref{lem:inter}.
\end{fact}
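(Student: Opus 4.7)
The plan is to prove both equalities by a single coordinate-wise replacement argument that uses the monotonicity of $F_\G$ to keep the conjugating element in $\G^*_{\text{met, fin}}$, and then to deduce closedness from Lemma \ref{lem:inter} together with the observation from Example \ref{ex:internal} that conjugacy classes and their products are metrically internal.

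First I would prove the inclusion $\bar g^{\G^*_{\text{met}}}\subseteq \bar g^{\G^*_{\text{met, fin}}}$, which is the only nontrivial direction. Fix representatives $\bar g=(g_n)/\E$ with $\|\bar g\|<\infty$ and $\bar h=(h_n)/\E\in\G^*_{\text{met}}$. Because $\|\bar g\|<\infty$, there is a constant $C>0$ and a set $U\in\U$ with $\|g_n\|_n\le C$ for all $n\in U$. Apply Assumption \ref{eq:ass} coordinate-wise: for each $n\in U$ the element $h_n^{-1}g_nh_n$ lies in $g_n^{G_n}$, so there is $h'_n\in G_n$ with
\[
\|h'_n\|_n<F_\G(\|g_n\|_n)\le F_\G(C),\qquad h'^{-1}_n g_n h'_n=h_n^{-1}g_n h_n,
\]
where the inequality uses that $F_\G$ is non-decreasing. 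Setting $h'_n=e$ off $U$ and $\bar h'=(h'_n)/\E$, we get $\|\bar h'\|\le F_\G(C)<\infty$, hence $\bar h'\in\G^*_{\text{met, fin}}$, and $\bar h'^{-1}\bar g\bar h'=\bar h^{-1}\bar g\bar h$. The reverse inclusion is trivial, so $\bar g^{\G^*_{\text{met}}}=\bar g^{\G^*_{\text{met, fin}}}$.

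Next, for the equality $C_n(\bar g,\G^*_{\text{met}})=C_n(\bar g,\G^*_{\text{met, fin}})$, just note that every element of $C_n(\bar g,\G^*_{\text{met}})$ is a product of at most $n$ conjugates of $\bar g^{\pm1}$ by elements of $\G^*_{\text{met}}$; applying the first step separately to each conjugating element (using that $\bar g^{-1}\in\G^*_{\text{met, fin}}$ as well, since $\|\bar g^{-1}\|=\|\bar g\|$) replaces every one of them with an element of $\G^*_{\text{met, fin}}$. The reverse inclusion is again immediate. Note also that $C_n(\bar g,\G^*_{\text{met}})\subseteq\G^*_{\text{met, fin}}$ automatically, because the norm is bi-invariant, so every conjugate has norm $\|\bar g\|$, and a product of at most $n$ of them has norm at most $n\|\bar g\|<\infty$.

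Finally, for the closedness assertion, Example \ref{ex:internal}(\ref{conj}) shows that $\bar g^{\G^*_{\text{met}}}$ and $\bar g^{-1\,\G^*_{\text{met}}}$ are metrically internal, and Remark \ref{rem:clos} shows that their union and any finite product stay metrically internal; hence $C_n(\bar g,\G^*_{\text{met}})$ is metrically internal. By Lemma \ref{lem:inter} its intersection with $\G^*_{\text{met, fin}}$ is closed in the norm topology on $\G^*_{\text{met, fin}}$. Since we have just shown that this intersection equals $C_n(\bar g,\G^*_{\text{met, fin}})$, the conclusion follows. The only place any real work occurs is the replacement $\bar h\rightsquigarrow\bar h'$, and the non-decreasing hypothesis on $F_\G$ is precisely what makes that step go through uniformly in $n$; everything else is bookkeeping.
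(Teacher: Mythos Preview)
Your proof is correct and follows essentially the same approach as the paper's: the paper also reduces to the inclusion $\bar g^{\G^*_{\text{met}}}\subseteq\bar g^{\G^*_{\text{met, fin}}}$, replaces the conjugating sequence coordinate-wise using Assumption~\ref{eq:ass}, and then refers to Remark~\ref{rem:clos} for the rest. You are in fact slightly more careful than the paper in invoking the monotonicity of $F_\G$ via a uniform bound $C$ on $\|g_n\|_n$ (the paper writes $\|h_n\|_n<F_\G(\|\bar g\|)$ directly), and you spell out the $C_n$ step and the closedness deduction explicitly, but there is no substantive difference in method.
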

\begin{proof}
It is enough to prove that $\bar{g}^{\G^*_{\text{met}}} \subseteq \bar{g}^{\G^*_{\text{met, fin}}}$, as the rest follows by Remark \ref{rem:clos}. Take $\bar{x} \in \bar{g}^{\G^*_{\text{met}}}$, then $\bar{x}=(x_n)_{n\in\N}/\E = \left(g_n^{h_n}\right)_{n\in\N}/\E$, where $\|h_n\|_n < F_{\G}(\|\bar{g}\|)$. Hence \[\bar{h}=(h_n)_{n\in\N}\in \G^*_{\text{met, fin}},\] so $\bar{x}\in \bar{g}^{\G^*_{\text{met, fin}}}$.
\end{proof}

\subsection{Characterization of simplicity}

Fix a family $\G = (G_n,\|\cdot\|_n)_{n\in \N}$ of metric groups and non-principal ultrafilter $\U$. The following theorem gives a characterization of simplicity for finitary subgroup  $\G^*_{\text{met,fin}}$ (see (\ref{def:fin})).

\begin{theorem} \label{thm:SimpFam} 
The following facts are equivalent under Assumption \ref{eq:ass}.
\begin{enumerate}
\item Finitary metric ultraproduct $\G^*_{\text{met, fin}}$ is a simple group.

\item For all $t>r>0$ and for every infinite sequence of positive reals $(\varepsilon_1,\varepsilon_2,\ldots)$ there is $N\in\N$ such that for $\U$-many $k\in\N$, for every 
$g\in G_k$, such that $\|g\|_k> r$ \begin{equation}\label{eq:777}
\B_t(e) \subseteq \bigcup_{n=1}^N   C_n\left(g,G_k\right) \B_{\varepsilon_n}(e)   \text{ holds in }G_k.
\end{equation}

\item For all $t>r>0$ and for every infinite sequence of positive reals $(\varepsilon_1,\varepsilon_2,\ldots)$ there is $N\in\N$ such that for every 
$\bar{g}\in \G^*_{\text{met, fin}}$, $\|\bar{g}\|> r$ 
\begin{equation} \label{cond2}
\B_t(e) \subseteq \bigcup_{n=1}^N  C_n\left(\bar{g},\G^*_{\text{met}} \right) \B_{\varepsilon_n}(e)  \text{ holds in }\G^*_{\text{met}}.
\end{equation}



%
\end{enumerate}
\end{theorem}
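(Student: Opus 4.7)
The plan is to establish the cycle $(2)\Rightarrow(3)\Rightarrow(1)\Rightarrow(2)$, using Theorem \ref{thm:sim2}, completeness of $\G^*_{\text{met, fin}}$ (Lemma \ref{lem:propp}(2)), the Baire category theorem, and the conjugacy-class control from Assumption \ref{eq:ass} via Fact \ref{fact:ass}.

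For $(2)\Rightarrow(3)$: given $\bar{g}=(g_k)_k/\E\in\G^*_{\text{met, fin}}$ with $\|\bar{g}\|>r$, the set $\{k:\|g_k\|_k>r\}$ lies in $\U$; intersecting it with the $\U$-set on which the coordinate-wise covering from (2) holds, the covering lifts to $\G^*_{\text{met}}$ via the internal representation $C_n(\bar{g},\G^*_{\text{met}})=\prod_k C_n(g_k,G_k)/\E$ (Example \ref{ex:internal}).

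For $(3)\Rightarrow(1)$: fix $\bar{g}\in\G^*_{\text{met, fin}}\setminus\{e\}$ and any target $\bar{h}\in\G^*_{\text{met, fin}}$. Iterating (3) with rapidly shrinking error sequences, I decompose $\bar{h}=\bar{x}_1\bar{x}_2\cdots\bar{x}_k\bar{b}_k$ with $\bar{x}_i\in C_{n_i}(\bar{g})$ and $\|\bar{b}_k\|\to 0$; completeness (Lemma \ref{lem:propp}(2)) forces the partial products to converge to $\bar{h}$, so the normal closure of $\bar{g}$ is dense in $\G^*_{\text{met, fin}}$. To promote density to equality, note that this normal closure equals $\bigcup_n C_n(\bar{g})$ with each $C_n$ closed (Fact \ref{fact:ass}, Lemma \ref{lem:inter}); applying (3) at $\bar{h}=e$ exhibits an open neighbourhood of $e$ inside the finite cover $\bigcup_{n\leq N} C_n(\bar{g})\B_{\varepsilon_n}(e)$, and a Baire-category argument combined with iterative refinement shows that a whole ball $\B_\delta(e)$ lies inside the normal closure — which is then an open, hence clopen, dense subgroup of $\G^*_{\text{met, fin}}$, so equal to it.

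For $(1)\Rightarrow(2)$, the hard direction: assume simplicity and, for contradiction, that (2) fails for some $t>r>0$ and $(\varepsilon_n)$. For each $N$, the set $V_N\in\U$ of coordinates admitting $g_{N,k}\in G_k$ with $\|g_{N,k}\|_k>r$ and $h_{N,k}\in\B_t(e)$ with $d(h_{N,k},C_n(g_{N,k},G_k))\geq\varepsilon_n$ for every $n\leq N$. Shrinking so that $V_1\supseteq V_2\supseteq\cdots$ with $\bigcap V_N=\emptyset$ and setting $N(k)=\max\{N:k\in V_N\}$, $g_k=g_{N(k),k}$, $h_k=h_{N(k),k}$, I form $\bar{g}=(g_k)_k/\E$ and $\bar{h}=(h_k)_k/\E$ so that $\|\bar{g}\|>r$, $\|\bar{h}\|\leq t$, and $d(\bar{h},C_n(\bar{g}))\geq\varepsilon_n$ for every $n$. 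Invoking Assumption \ref{eq:ass} to ensure $\bar{g}\in\G^*_{\text{met, fin}}$, simplicity gives $\G^*_{\text{met, fin}}=\bigcup_n C_n(\bar{g},\G^*_{\text{met}})$, so Theorem \ref{thm:sim2} applied to $X_n=C_n(\bar{g})$ produces $N_0$ with $\B_t(e)\subseteq\bigcup_{n=1}^{N_0}C_n(\bar{g})\B_{\varepsilon_n/2}(e)$, contradicting the distance lower bounds for $n\leq N_0$. The main obstacle is exactly this step: ensuring the diagonalized $\bar{g}$ has finite norm. Here Assumption \ref{eq:ass} is the lever — since conjugacy classes in $G_k$ are detected by conjugators of norm $<F_\G(\|g\|_k)$, one expects to reduce to witnesses with $\|g_{N,k}\|_k$ uniformly bounded, but making this reduction rigorous (together with the parallel Baire step in $(3)\Rightarrow(1)$) is the technical heart of the argument.
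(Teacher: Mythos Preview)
Your $(2)\Rightarrow(3)$ is fine and matches the paper.

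Your $(3)\Rightarrow(1)$ is needlessly roundabout, and the ``density plus Baire'' step you flag as unfinished is indeed a gap: Baire category needs a \emph{covering} by closed sets, not a dense union, and your sketch never produces one. None of this is necessary. For a fixed nontrivial $\bar g\in\G^*_{\text{met,fin}}$, pick $0<r<\|\bar g\|$; then condition (3) of the theorem, specialised to this $r$ (and any $t>r$, any $(\varepsilon_n)$), is exactly condition (3) of Theorem~\ref{thm:sim2} for the metrically internal family $X_n=C_n(\bar g,\G^*_{\text{met}})$. Theorem~\ref{thm:sim2} immediately yields $\G^*_{\text{met,fin}}\subseteq\bigcup_n C_n(\bar g,\G^*_{\text{met}})$, which by Fact~\ref{fact:ass} is the normal closure of $\bar g$ in $\G^*_{\text{met,fin}}$. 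One line; no iteration, no completeness, no Baire.

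For $(1)\Rightarrow(2)$ you correctly locate the obstacle but misidentify the cure. Assumption~\ref{eq:ass} bounds the norm of \emph{conjugators} needed to realise $g^{G_k}$ in terms of $\|g\|_k$; it says nothing about $\|g\|_k$ itself, so it cannot force your diagonalised $\bar g$ into $\G^*_{\text{met,fin}}$, and your final appeal to Theorem~\ref{thm:sim2} for $X_n=C_n(\bar g)$ then has no footing. The paper's device is a \emph{product-group trick} that dissolves the obstacle: pass to $\H=(G_k\times G_k,\|(a,b)\|=\max(\|a\|,\|b\|))$ and encode the relation ``$x\in C_n(y)$'' by the internal sets $X_n=\{(x,y):x\in C_n(y,G_k)\}$, together with $X_0=G_k\times\{e\}$. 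Simplicity of $\G^*_{\text{met,fin}}$ (using Fact~\ref{fact:ass}, which is where Assumption~\ref{eq:ass} actually enters) is equivalent to the covering $\H^*_{\text{met,fin}}\subseteq\bigcup_{n\ge 0}X_n$, and a single application of Theorem~\ref{thm:sim2} to $\H$ with the sequence $(r,\frac{\varepsilon_1}{2},\frac{\varepsilon_2}{3},\ldots)$, followed by Lemma~\ref{lem:aluk} to absorb the perturbation in the $y$-coordinate, yields (2). The point is that the universal quantifier over $g$ in (2) becomes the second coordinate of the product and is swallowed by compactness in one stroke --- no diagonal selection of witnesses $g_{N,k}$ is ever needed.
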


We use the following lemma.

\begin{lemma}\label{lem:aluk}
Let $(G,\|\cdot\|)$ be a metric group, $g,h\in G$, $n\in\N$ and $\varepsilon > \|g^{-1} h\|$. Then
\[C_n(h,G) \subseteq C_n(g,G) \B_{n\cdot \varepsilon}(e).\]
In other words, \[\bigcup \left\{  C_n(h,G) : h\in \B_\varepsilon(g) \right\} \subseteq C_n(g,G) \B_{n\cdot \varepsilon}(e). \]
\end{lemma}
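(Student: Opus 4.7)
The plan is a direct substitution argument. Take an arbitrary $x \in C_n(h,G)$; by definition there exist $k \le n$, signs $\epsilon_i \in \{+1,-1\}$ and elements $a_i \in G$ with $x = y_1 y_2 \cdots y_k$, where $y_i := a_i^{-1} h^{\epsilon_i} a_i$. The natural candidate in $C_n(g,G)$ is obtained by swapping $g$ for $h$ in each conjugate: set $z_i := a_i^{-1} g^{\epsilon_i} a_i$ and $z := z_1 z_2 \cdots z_k$, which visibly lies in $C_n(g,G)$. It then suffices to prove $\|z^{-1} x\| < n \varepsilon$, and then $x = z \cdot (z^{-1}x) \in C_n(g,G)\, \B_{n\varepsilon}(e)$.

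For each $i$ let $t_i := z_i^{-1} y_i$, so $y_i = z_i t_i$. A short calculation gives $t_i = a_i^{-1} g^{-\epsilon_i} h^{\epsilon_i} a_i$; by bi-invariance $\|t_i\| = \|g^{-\epsilon_i} h^{\epsilon_i}\|$, which equals $\|g^{-1} h\|$ in both sign cases (for $\epsilon_i = -1$ one uses $\|gh^{-1}\| = \|hg^{-1}\| = \|g^{-1}h\|$, combining axiom (2) with conjugation by $g$). Thus $\|t_i\| < \varepsilon$ for every $i$. Now substitute $y_i = z_i t_i$ into
\[z^{-1} x \;=\; z_k^{-1} \cdots z_1^{-1} \cdot z_1 t_1 \cdot z_2 t_2 \cdots z_k t_k\]
and, step by step, push each remaining $z_j^{-1}$ rightward through the accumulated $t$-factors by conjugation (which preserves the norm). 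After $k$ such steps the whole product is rewritten as a product of exactly $k$ conjugates of $t_1, \ldots, t_k$, and the triangle inequality yields $\|z^{-1}x\| < k\varepsilon \le n\varepsilon$. The ``in other words'' reformulation is just a restatement, since the condition $\varepsilon > \|g^{-1} h\|$ is precisely $h \in \B_\varepsilon(g)$.

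I do not anticipate a genuine obstacle here: the argument is essentially bookkeeping with bi-invariance. The only moment requiring mild care is verifying that $\|t_i\| < \varepsilon$ uniformly in the sign $\epsilon_i$, which needs the symmetric treatment of $g$ and $g^{-1}$ built into the definition of a (pseudo) bi-invariant norm; after that, the telescoping is automatic.
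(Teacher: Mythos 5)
Your proof is correct and follows essentially the same route as the paper: replace each conjugate of $h^{\pm1}$ by the corresponding conjugate of $g^{\pm1}$, collect the error terms, which are conjugates of $t^{\pm1}$ with $t=g^{-1}h$, using conjugation-invariance, and finish with the triangle inequality. Your bookkeeping (bounding $\|z^{-1}x\|$ inductively) and your explicit check that $\|gh^{-1}\|=\|g^{-1}h\|$ in the sign $-1$ case are if anything slightly more careful than the paper's one-line rearrangement.
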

\begin{proof}
Take $x\in C_n(h,G)$. Then $x=h^{\pm y_1}\cdot\ldots\cdot h^{\pm y_n}$, for some $y_1,\ldots,y_n\in G$. Define $t=g^{-1} h$, then $\|t\|<\varepsilon$ and $x=g^{\pm y_1}\cdot t^{\pm y_1}\cdot\ldots\cdot g^{\pm y_n}\cdot t^{\pm y_n}=g^{\pm z_1}\cdot\ldots\cdot g^{\pm z_n}\cdot t^{\pm c_1}\cdot\ldots t^{\pm c_n} \in C_n(g,G) \B_{n\cdot \varepsilon}(e)$, for some $z_1,\ldots,z_n,c_1,\ldots,c_n\in G$.
\end{proof}

\begin{proof}[Proof of Theorem \ref{thm:SimpFam}]
Define a family of metric groups $\H=(G_n\times G_n,\|\cdot\|^2_n)$, where \[\|(a,b)\|^2_n = \max\{\|a\|,\|b\|\} \ \ a,b\in G_n.\] Observe that a ball $\B_\varepsilon(a,b)$ in $G_n\times G_n$ is of the form $\B_\varepsilon(a)\times \B_\varepsilon(b)$, for balls $\B_\varepsilon(a)$, $\B_\varepsilon(b)$ in $G_n$. Moreover
$\H^*_{\text{met, fin}} = \G^*_{\text{met, fin}} \times \G^*_{\text{met, fin}}$.
Define
\begin{align}
X_0 = \G^*_{\text{met}}\times \{e\},\text{ and for }n>0,\ X_n = \left\{(\bar{x},\bar{y}) : \bar{x}\in C_n\left(\bar{y},\G^*_{\text{met}}\right)\right\}, \label{lodom} \\
X_{0,k} = G_k\times \{e\},\text{ and for }n>0,\ X_{n,k} = \left\{(x,y) : x\in C_n\left(y,G_k\right)\right\}.
\end{align}
Clearly $X_0, X_n$, $n\in\N$ are metrically internal subsets of $\H^*_{\text{met}}$, as $X_n =  \frac{X_{n,0}\times X_{n,1}\times X_{n,2}\times\cdots}{\E}$. Observe that $\G^*_{\text{met}}$ is a simple group if and only if  $\{X_n\}_{n>0}$ is a cover of $\G^*_{\text{met, fin}} \times \G^*_{\text{met, fin}} = \H^*_{\text{met,fin}}$: 
\begin{equation}\label{eq:covr}
\H^*_{\text{met,fin}} = \G^*_{\text{met, fin}} \times \G^*_{\text{met, fin}}\subseteq \bigcup_{n\geq 0} X_n.
\end{equation}

$(1)\Rightarrow (2)$ Fix $t>r>0$ and $\bar{\varepsilon}=(\varepsilon_1,\varepsilon_2,\ldots)\subset\R_{>0}$. 
By Theorem \ref{thm:sim2}(2) applied to $(r,\frac{\varepsilon_1}{2},\frac{\varepsilon_2}{3},\frac{\varepsilon_3}{4},\ldots)$ and $\H^*_{\text{met,fin}}$ in  (\ref{eq:covr}), there is $\U$-many $k\in\N$ and $N\in\N$ such that 
\begin{equation}\label{rhs}
\B_t(e) \subseteq X_{0,k} \B_{r}(e)\cup \bigcup_{n=1}^N X_{n,k} \B_{\frac{\varepsilon_n}{n+1}}(e) \text{ holds in }\G_k \times \G_k.
\end{equation}
The right hand side of (\ref{rhs}) is exactly
\begin{align}
\B_t(e) \subseteq  &G_k \times \B_r(e)\cup \bigcup_{n=1}^N\bigcup  \left\{(x,y) : x'\in C_ n\left(y',G_k\right),\ x\in \B_{\frac{\varepsilon_n}{n+1}}\left(x'\right),\ y\in \B_{\frac{\varepsilon_n}{n+1}}\left(y'\right)\right\}.
\end{align}
Lemma \ref{lem:aluk} implies
\begin{align}
\B_t(e) \subseteq  &G_k \times \B_r(e)\cup \bigcup_{n=1}^N  \left\{ (x,y) : x\in C_n\left(y,G_k\right) \B_{\varepsilon_n}(e)\right\} , \label{eq:lul}
\end{align}
Let us prove (2). Suppose $g\in G_k$, $\|g\|_k\geq r$. We prove (\ref{eq:777}). Take $x\in~\B_t(e)$. Then $(x,g)\in G_k \times G_k$ and $g\not\in \B_r(e)$, so by (\ref{eq:lul}), there is $n\in[1,N]$ such that \\ $x\in~C_n\left(g,G_k\right)\B_{\varepsilon_n}(e)$.  

$(2)\Rightarrow (3)$ is standard.

$(3)\Rightarrow (1)$ Take any $\bar{g}\neq e, \bar{h}\in \G^*_{\text{met, fin}}$. It is enough to prove that $\bar{h}$ is a product of conjugates of $\bar{g}$. Take $r=\|\bar{g}\|>0$, $t>\|\bar{h}\|$. Define \[X_{0,5} = \G^*_{\text{met, fin}}\times  \B_r(e).\] We apply Theorem \ref{thm:sim2}(3) to $t$ and a sequence $\bar{\varepsilon}\subset\R_{>0}$. Condition (3) for $t$ and $\bar{\varepsilon}$ implies that
\begin{equation}\label{rhss}
\B_t(e) \subseteq X_{0,5} \B_{\varepsilon_0}(e)\cup \bigcup_{n=1}^N X_n \B_{\varepsilon_n}(e) \text{ holds in }\G^*_{\text{met}} \times \G^*_{\text{met}},
\end{equation}
where $X_n$ is defined in (\ref{lodom}). By Theorem \ref{thm:sim2}, 
\begin{equation}\label{eq:covrrr}
\G^*_{\text{met, fin}} \times \G^*_{\text{met, fin}}\subseteq X_{0,5} \cup \bigcup_{n\geq 1} X_n.
\end{equation}
Since $\bar{g}\not\in \B_r(e)$, $(\bar{h},\bar{g})\not\in X_{0,5}$, so there is $n\in\N_{>0}$ such that  $(\bar{h},\bar{g})\in X_n$. Hence $\bar{h}\in C_n\left(\bar{g},\G^*_{\text{met}}\right)$.
\end{proof}

We extract from the proof of Theorem \ref{thm:SimpFam} and Lemma \ref{lem:aluk} the following fact.

\begin{remark}\label{covid}
The following conditions are equivalent under Assumption \ref{eq:ass}.
\begin{enumerate}
\item Finitary metric ultraproduct $\G^*_{\text{met, fin}}$ is a simple group.

 \item  For all $t>r>0$ and for any sequence $(\varepsilon_0, \varepsilon_1,\ldots)\subset \R_{>0}$ there is $N\in\N$ such that for $\U$-many $k\in \N$, for every $g \in G_k$, $\|g\|_k \geq r$ 
    \[
\B_{t}(e) \subseteq \bigcup \left\{ C_n\left(g',G_k\right) \B_{\varepsilon_n}(e) : 0\leq n\leq N,\ \ g'\in B_{\varepsilon_n}(g)  \right\}.
\]

\end{enumerate}
\end{remark}

Let us name to the property (2) from Theorem \ref{thm:SimpFam} for later use.

\begin{definition} \label{def:rt} Let $(G,\|\cdot \|)$ be a metric group and $t>r>0$.
\begin{enumerate}
    \item We say that $(\varepsilon_0,\ldots,\varepsilon_N)\subset\R_{>0}$ is  \emph{$(r,t)$-big for $G$} if for all $g\in G$ with $\|g\|> r$
\[{\B_{t}(e) \subseteq \bigcup_{n=0}^N C_n(g,G) \B_{\varepsilon_n}(e)}\] 
holds in $G$.
    \item We say that $(\varepsilon_0,\ldots,\varepsilon_N)$ is $(r,t)$-small if it is not $(r,t)$-big.
\end{enumerate}
\end{definition}

Let us note the following immediate remark, which gives a simplicity condition regardless the choice of ultrafilter.

\begin{remark} \label{FinComp} The following conditions are equivalent under Assumption \ref{eq:ass}.
\begin{enumerate}
    \item Metric ultrapower $\G^*_{\text{met, fin}}$ of $\G$ is simple, for every non-principal ultrafilter $\U$.
    \item For all $t>r>0$ and for every infinite sequence of positive reals  $(\varepsilon_0,\varepsilon_1,\ldots)$ there is $N\in\N$ such that
    \[E_n=\left\{n\in\N : (\varepsilon_0,\varepsilon_1,\ldots,\varepsilon_N) \text{ is $(r,t)$-big for } G_n\right\}\] has finite complement in $\N$ (big sequences are definded in \ref{def:rt}).
\end{enumerate}
Indeed, if (2) fails, then the family $F=\{\N\setminus E_n\}_{n\in\N}$ consists of infinite sets and $\N\setminus E_n\supseteq \N\setminus E_{n+1}$. Therefore $F$ could be extended to an ultrafilter $\U$ on $\N$, so (1) fails by Theorem \ref{thm:SimpFam}.
\end{remark}

\subsection{A construction based on a single group}

Let us consider the following construction for a metric group $(G,\|\cdot\|)$. We scale $\|\cdot\|$ by positive numbers to get a family of norms, where Theorem \ref{thm:SimpFam} can be applied. Below is an immediate corollary of Theorem \ref{thm:SimpFam}. We apply this result in Example \ref{ex:per}.

\begin{corollary} \label{cor:stozek}
Suppose $(G,\|\cdot\|)$ is a metric group and let $\bar{c}=(c_n)_{n\in\N}\in\R_{>0}$. Consider \[\G = (G,c_n \|\cdot\|)_{n\in\N}.\] Assume that $\G$ satisfies Assumption \ref{eq:ass}. Then the following facts are equivalent.
\begin{enumerate}
    \item $\G^*_{\text{met, fin}}$ is a simple group.
    \item For all $t>r>0$ and for any sequence $(\varepsilon_0, \varepsilon_1,\ldots)\subset \R_{>0}$ there is $N\in\N$ such that for $\U$-many $k\in \N$, $\left(\frac{\varepsilon_0}{c_k},\ldots,\frac{\varepsilon_N}{c_k}\right)$ is $\left(\frac{r}{c_k},\frac{t}{c_k}\right)$-big for $G_k$, that is, for every $g \in G_k$, $\|g\|_k \geq \frac{r}{c_k}$ 
    \[
\B_{\frac{t}{c_k}}(e) \subseteq \bigcup_{n=0}^N  C_n\left(g,G\right) \B_{\frac{\varepsilon_n}{c_k}}(e).
\]
    \item For all $t>r>0$ and for any sequence $\bar{\varepsilon} = (\varepsilon_0, \varepsilon_1,\ldots)\subset \R_{>0}$ there is $N\in\N$ such that for $\U$-many $k\in \N$, for every $g \in G_k$, $\|g\|_k \geq \frac{r}{c_k}$ 
    \[
\B_{\frac{t}{c_k}}(e) \subseteq \bigcup \left\{ C_n\left(g',G\right) \B_{\frac{\varepsilon_n}{c_k}}(e) : 0\leq n\leq N,\ \ g'\in \B_{\frac{\varepsilon_n}{c_k}}(g)  \right\}.
\]
%
\end{enumerate}
\end{corollary}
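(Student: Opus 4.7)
The plan is to derive both equivalences by immediate rescaling from Theorem \ref{thm:SimpFam} and Remark \ref{covid}, applied to the family $\G = (G, c_n\|\cdot\|)_{n\in\N}$. The first thing I would record is the basic rescaling identity: since every group $G_n$ in this family equals $G$ as an abstract group, conjugacy classes and products $C_m(g, G_n) = C_m(g, G)$ do not depend on $n$, and the only role of $c_n$ is to dilate the norm. Consequently, for every $\varepsilon > 0$ and every $n \in \N$,
\[
\B_\varepsilon(e, G_n) \;=\; \{h \in G : c_n\|h\| < \varepsilon\} \;=\; \B_{\varepsilon/c_n}(e, G),
\]
and the inequality $\|g\|_n > r$ is equivalent to $\|g\| > r/c_n$.

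Next I would invoke the equivalence (1) $\Leftrightarrow$ (2) of Theorem \ref{thm:SimpFam} for $\G$, which applies because Assumption \ref{eq:ass} is imposed in the hypothesis. Substituting the rescaling identity above into condition (2) of Theorem \ref{thm:SimpFam} (replacing each ball of radius $\varepsilon_n$ by $\B_{\varepsilon_n/c_k}(e,G)$, each ball of radius $t$ by $\B_{t/c_k}(e,G)$, and each threshold $\|g\|_k > r$ by $\|g\| > r/c_k$) transforms it verbatim into condition (2) of the corollary. In exactly the same way, condition (2) of Remark \ref{covid}, after the same substitution, becomes condition (3) of the corollary; this yields the equivalence (1) $\Leftrightarrow$ (3).

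The only mild bookkeeping issue is the indexing: the union in Theorem \ref{thm:SimpFam}(2) runs from $n=1$, whereas in the corollary it runs from $n=0$. This is harmless because by convention $C_0(g, G) = \{e\}$, so the $n=0$ summand contributes only the ball $\B_{\varepsilon_0/c_k}(e,G)$ around the identity; enlarging the cover this way can only weaken the condition, and any such weakening can be compensated by shifting indices in the given sequence $(\varepsilon_n)$. The main point is that there is no genuine obstacle here: the entire argument is a change of variables driven by the scaling factors $c_n$, and the substantive content lies entirely in Theorem \ref{thm:SimpFam} and Remark \ref{covid}.
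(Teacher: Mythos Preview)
Your proposal is correct and matches the paper's intent: the paper states this result as an ``immediate corollary'' of Theorem~\ref{thm:SimpFam} (and Remark~\ref{covid}) without giving a proof, and your rescaling argument is exactly the intended derivation. Your handling of the minor indexing discrepancy ($n=0$ versus $n=1$) is also fine.
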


\subsection{Metric and topological simplicity}\label{sec:top}

The condition (2) from Theorem \ref{thm:SimpFam} for a single metric group $(G,\|\cdot\|)$ (that is, $\G$ consists only of one metric group) does not depend on the choice of ultrafilter $\U$. Therefore, it make sense to have the following definition.

\begin{definition}\label{def:metsim}
A metric group $(G,\|\cdot\|)$ is called \emph{metrically simple} if its ultrapower $G^*_{\text{met, fin}}$ is a simple group, i.e.  $G$ satisfies the condition (2) from Theorem \ref{thm:SimpFam}.
\end{definition}

A topological $G$ group is called \emph{topologically simple} if every nontrivial normal subgroup of $G$ is dense.

\begin{proposition} \label{prop:psim} \mbox{}
\begin{enumerate}
    \item  Every simple compact metric group (e.g. $SO_3(\R)$) is metrically simple (see Definition \ref{def:metsim}).
    \item Metric simplicity implies topological simplicity, that is if $G$ is metrically simple, then $G$ has no closed normal proper subgroups.
\end{enumerate}
\end{proposition}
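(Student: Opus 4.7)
The plan is to handle the two parts separately, since each reduces to a single prior result.

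For (1), the content is essentially Example~\ref{ex:compact}: for a compact metric group $G$, the diagonal embedding yields an isomorphism $G \cong G^*_{\text{met}} = G^*_{\text{met, fin}}$ of metric groups. Abstract simplicity of $G$ therefore transfers to $G^*_{\text{met, fin}}$, which by Definition~\ref{def:metsim} is exactly what metric simplicity means. No further work is needed.

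For (2), I would argue by contradiction. Suppose $N$ is a proper, closed, nontrivial normal subgroup of $G$, and pick $g \in N \setminus \{e\}$; set $r = \|g\|/2$. For arbitrary $t > r$ and $\varepsilon > 0$, I apply condition (2) of Theorem~\ref{thm:SimpFam} --- valid for $G$ by Definition~\ref{def:metsim} --- to the constant family $(G,G,\ldots)$, the pair $(r,t)$, and the constant sequence $(\varepsilon,\varepsilon,\ldots)$. Since the covering property is independent of the index, the ``$\U$-many $k$'' clause reduces to a single statement inside $G$: there is $N' \in \N$ with
\[
\B_t(e) \subseteq \bigcup_{n=1}^{N'} C_n(g,G)\, \B_\varepsilon(e).
\]
Normality of $N$ gives $C_n(g,G) \subseteq N$ for every $n$, so $\B_t(e) \subseteq N \cdot \B_\varepsilon(e)$; every element of $\B_t(e)$ lies within distance $\varepsilon$ of $N$. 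Since $\varepsilon$ is arbitrary and $N$ is closed, $\B_t(e) \subseteq N$, and letting $t$ range over all positive reals exhausts $G$, forcing $N = G$, the desired contradiction.

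I do not anticipate a real obstacle. Both parts follow directly from already-established results, and the only subtlety worth flagging is that metric simplicity is defined through the covering condition of Theorem~\ref{thm:SimpFam}(2) itself, so Assumption~\ref{eq:ass} does not need separate verification. The argument for (2) is simply the natural translation of ``$g$ normally generates $G$'' into the quantitative covering form provided by the compactness machinery of Section~\ref{sec:compTh}.
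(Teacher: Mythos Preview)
Your proof of (1) matches the paper's exactly: both invoke Example~\ref{ex:compact} to identify $G$ with $G^*_{\text{met,fin}}$ for compact $G$, so simplicity transfers.

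For (2) your argument is correct but takes a different route from the paper. The paper argues the contrapositive directly at the level of the ultrapower: given a proper closed normal $N\lhd G$, it observes that $N^*_{\text{met,fin}}$ is a proper nontrivial normal subgroup of $G^*_{\text{met,fin}}$ (properness uses closedness of $N$ to keep the constant sequence $g^*$ outside $N^*_{\text{met,fin}}$ for any $g\in G\setminus N$), contradicting simplicity of $G^*_{\text{met,fin}}$. You instead unpack Theorem~\ref{thm:SimpFam}(2) as a covering statement in $G$ itself and use it to show that a nontrivial closed normal subgroup swallows every ball $\B_t(e)$. The paper's approach is shorter and more conceptual, staying at the level of the defining ultrapower; your approach is more explicit and shows concretely how the quantitative covering condition forces density of the normal closure of any nontrivial element. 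Both are perfectly valid, and your remark that Assumption~\ref{eq:ass} is not needed (since you are invoking condition (2) directly, as sanctioned by Definition~\ref{def:metsim}) is well placed.
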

\begin{proof}
(1) If $(G,\|\cdot\|)$ is a compact metric group, then $\|\cdot\|$ is a bounded function on $G$ and $G = G^*_{\text{met, fin}}$, as every sequence from $G$ is $\E$-equivalent with a constant sequence. Therefore $G^*_{\text{met, fin}}$ is simple.

(2) If $N\lhd G$ is a closed normal subgroup, then every $g\in G\setminus N$ gives \[g^*=(g,g,g,\ldots) \E\in G^*_{\text{met, fin}} \setminus N^*_{\text{met, fin}},\] so $G^*_{\text{met, fin}}$ is not simple, as it has $N^*_{\text{met, fin}}$ as a proper normal subgroup.
\end{proof}

Since $(\G^*_{\text{met, fin}}, \|\cdot \|)$ is a topological group (Lemma \ref{lem:propp} (1)), it makes sense to ask: is there a criterion for topological simplicity of $\G^*_{\text{met, fin}}$? We answer this question in theorem below.

\begin{theorem} \label{prop:psim2} \mbox{}
The following conditions are equivalent 
\begin{enumerate}
\item Finitary metric ultraproduct $\G^*_{\text{met, fin}}$ is a topologically simple group.

 \item For all $t>r>0$ and $\varepsilon>0$ there is $N\in\N$ such that for all $g\in G$ with $\|g\| \in (r,t]$
 \[\B_t(e) \subseteq  C_N(g,G) \B_{\varepsilon}(e) \text{ holds in } G.\]
\end{enumerate}
\end{theorem}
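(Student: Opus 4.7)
The plan is to mirror the strategy used in Theorem \ref{thm:SimpFam}, but adapted to topological rather than algebraic simplicity: pass to the product family in order to transform the two-sided statement ``for all $\bar g$ with $\|\bar g\|\in(r,t]$, for all $\bar h\in\B_t(e)$'' into a single internal covering condition, and then invoke the compactness theorem (Remark \ref{rem:gen}).

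The direction $(2)\Rightarrow(1)$ is essentially immediate. Given a nontrivial $\bar g\in\G^*_{\text{met,fin}}$, an arbitrary $\bar h\in\G^*_{\text{met,fin}}$, and $\varepsilon>0$, I would set $r:=\|\bar g\|/2>0$ and choose $t$ larger than both $\|\bar g\|$ and $\|\bar h\|$. Then $\|\bar g\|\in(r,t]$, so (2) furnishes $N$ with $\bar h\in\B_t(e)\subseteq C_N(\bar g,\G^*_{\text{met,fin}})\B_\varepsilon(e)$. Hence the normal closure of $\bar g$ is dense, giving topological simplicity.

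For $(1)\Rightarrow(2)$, I would set up the product family $\H=(G_n\times G_n,\|\cdot\|^2_n)_{n\in\N}$ with $\|(a,b)\|^2_n=\max(\|a\|_n,\|b\|_n)$, as in the proof of Theorem \ref{thm:SimpFam}, noting that $\H^*_{\text{met,fin}}=\G^*_{\text{met,fin}}\times\G^*_{\text{met,fin}}$. Fix $t>r>0$ and $\varepsilon>0$. I would introduce the metrically internal sets
\[X_n=\prod_k\{(x,y)\in G_k\times G_k : x\in C_n(y,G_k)\B_{\le\varepsilon/3}(e)\}/\E\]
and the internal target
\[Y=\prod_k\bigl(G_k\times\{y\in G_k:\|y\|_k>r\}\bigr)/\E.\]
The first key step is to verify $Y\cap\H^*_{\text{met,fin}}\subseteq\bigcup_n X_n$ using topological simplicity: any $(\bar h,\bar g)$ in the left side satisfies $\|\bar g\|\ge r>0$, so density of the normal closure of $\bar g$ yields some $n$ and $\bar z\in C_n(\bar g,\G^*_{\text{met,fin}})=C_n(\bar g,\G^*_{\text{met}})$ (Fact \ref{fact:ass}) with $\|\bar h\bar z^{-1}\|<\varepsilon/3$, and a lifting argument exactly as in the proof of Theorem \ref{thm:SimpFam} shows $(\bar h,\bar g)\in X_n$.

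The second key step is to apply Remark \ref{rem:gen} with cut-off $t':=t+1$ and the sequence $\varepsilon_n:=\varepsilon/(3(n+1))$, producing $N\in\N$ such that for $\U$-almost all $k$,
\[Y_k\cap\B_{t'}(e)\subseteq\bigcup_{n\le N}X_{n,k}\B_{\varepsilon_n}(e).\]
Unpacking a pair $(h,g)$ in the left side yields some $n\le N$ together with a decomposition $h=wca$, $g=yb$ where $w\in C_n(y,G_k)$, $\|c\|\le\varepsilon/3$ and $\|a\|,\|b\|<\varepsilon_n$. Since $\|g^{-1}y\|=\|b\|<\varepsilon_n$, Lemma \ref{lem:aluk} replaces $C_n(y,G_k)$ by $C_n(g,G_k)\B_{n\varepsilon_n}(e)$, and combining the three error contributions gives
\[h\in C_n(g,G_k)\B_{\delta}(e)\subseteq C_N(g,G_k)\B_\varepsilon(e), \qquad \delta\le \tfrac{n\varepsilon}{3(n+1)}+\tfrac{\varepsilon}{3}+\tfrac{\varepsilon}{3(n+1)}\le\varepsilon.\]
Finally I would transfer this coordinate-level conclusion to $\G^*_{\text{met,fin}}$ by lifting representatives of $\bar g$ with $\|\bar g\|\in(r,t]$ and of any $\bar h\in\B_t(e)$; since $t<t'$, the relevant norm conditions hold for $\U$-many $k$, so taking the ultraproduct yields $\bar h\in C_N(\bar g,\G^*_{\text{met,fin}})\B_{\le\varepsilon}(e)$, and running the argument with $\varepsilon/2$ at the outset converts the closed-ball conclusion to the open-ball form demanded in (2).

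The main obstacle is the coordinated choice of the two slack parameters: the intrinsic $\varepsilon/3$ baked into the definition of $X_n$, and the external sequence $\varepsilon_n=\varepsilon/(3(n+1))$ passed to the compactness theorem. They must be designed so that, regardless of which $n\le N$ is produced by compactness, the Lemma \ref{lem:aluk} blow-up factor $n\cdot\varepsilon_n$ together with the other errors remains uniformly below $\varepsilon$. Without the $1/(n+1)$ decay the conjugacy-inflation term would dominate, and a naive constant choice $\varepsilon_n=\varepsilon/3$ would not close the argument.
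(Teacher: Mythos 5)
Your argument is correct in substance, but your direction $(1)\Rightarrow(2)$ takes a genuinely different and much heavier route than the paper, which never touches the compactness machinery here. The paper argues both directions by short witness arguments: for $(1)\Rightarrow(2)$, if (2) fails for some $t>r>0$ and a \emph{single} $\varepsilon>0$, one picks for each $n$ elements $g_n,h_n\in G$ with $\|g_n\|\in(r,t]$, $\|h_n\|<t$, $h_n\notin C_n(g_n,G)\B_\varepsilon(e)$, forms $g^*=(g_n)_n\E$ and $h^*=(h_n)_n\E$, and notes that $h^*$ stays at distance at least $\varepsilon$ from every $C_N(g^*,\cdot)$, so the closure of the normal closure of $g^*$ is a proper nontrivial closed normal subgroup; for $(2)\Rightarrow(1)$ it takes $g$ at distance $>\varepsilon$ from a closed normal subgroup $H$ and a nontrivial $h\in H$ and contradicts (2). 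The structural point your approach obscures is that, unlike Theorem \ref{thm:SimpFam}, condition (2) involves one fixed $\varepsilon$ rather than a sequence $(\varepsilon_n)$, so no doubled family, no appeal to Theorem \ref{thm:sim2} or Remark \ref{rem:gen}, and no Lemma \ref{lem:aluk} bookkeeping with decaying errors is required: negating (2) hands you the diagonal counterexample directly. That said, your scheme does check out: the covering $Y\cap\H^*_{\text{met,fin}}\subseteq\bigcup_n X_n$, the application of Remark \ref{rem:gen}, and the $\varepsilon/3$ accounting are all sound (note that only the trivial inclusion $C_n(\bar g,\G^*_{\text{met,fin}})\subseteq C_n(\bar g,\G^*_{\text{met}})$ is needed there, so citing Fact \ref{fact:ass} is superfluous at that spot), and it would generalize more readily to relative statements. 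One bookkeeping issue you should fix: condition (2) is a statement about the base group $G$, not about the ultrapower, so the coordinate-level inclusion you extract from Remark \ref{rem:gen} already \emph{is} (2), and your final transfer back to $\G^*_{\text{met,fin}}$ belongs not there but in $(2)\Rightarrow(1)$, where you apply (2) to elements $\bar g,\bar h$ of the ultrapower; that transfer (enlarging $t$ slightly so that $\U$-many coordinates satisfy the norm constraints, and controlling conjugators via Assumption \ref{eq:ass} or boundedness, as in Fact \ref{fact:ass}) should be stated explicitly, although the paper itself glosses the same step.
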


\begin{proof} 
$(1)\Rightarrow (2)$ Suppose (2) fails, that is there are $t>r>0$ and $\varepsilon>0$ such that for all $n\in\N$ there are $g_n,h_n\in G$ with
\begin{enumerate}
    \item $\|g_n\|\in (r,t]$, $\|h_n\|<t$,
    \item $h_n\not\in C_n(g_n,G)\B_{\varepsilon}(e)$.
\end{enumerate}
Take $g^* = (g_1,g_2,g_3,\ldots)\E$ and $h^*=(h_1,h_2,h_3,\ldots)\E$ from $G^*_{\text{met, fin}}$. Then \[h^*\not\in \langle\langle g^* \rangle\rangle \B_{\varepsilon}(e)\supseteq \overline{\langle\langle g^* \rangle\rangle},\] where 
$H=\langle\langle g^* \rangle\rangle$ is the normal subgroup of $G^*_{\text{met, fin}}$ generated by $g^*$. So $H$ is a proper normal closed subgroup of $G^*_{\text{met, fin}}$.

$(2)\Rightarrow (1)$ Suppose $\G^*_{\text{met, fin}}$ is not topologically simple, which is witnessed by a closed normal $H\lhd \G^*_{\text{met, fin}}$. Take $g\in \G^*_{\text{met, fin}}\setminus H$ and $\varepsilon>0$ such that \[\inf\left\{\left\|gh^{-1}\right\| : h\in H\right\}>\varepsilon.\] Take a non-trivial $h\in H$ and $0<r<t$ such that $\|g\|,\|h\|\in (r,t)$. Then $C_N(h,G)\B_{\varepsilon}(e)\subseteq H\cdot   \B_{\varepsilon}(e)\not\ni g$, but $g\in \B_t(e)$, contradiction.
\end{proof}

\subsection{Compactness and well-foundedness of trees}\label{sec:tree}

We elaborate in this subsection a bit on the $\varepsilon$-conditions from Theorem \ref{thm:sim_b}(2), making a link with well-founded trees in the descriptive set theory sense \cite[Section 2]{kech}.

Let us work under the notation from Theorem \ref{thm:sim_b}. Fix a family of bounded metric groups $\G$ and a family of metrically internal subsets $\X=\{X_m\}_{m\in\N}$ of $\G^*_{\text{met}}$. 
\begin{definition}
We call a finite sequence $(\varepsilon_0,\varepsilon_1,\ldots,\varepsilon_n)\subset\R_{>0}$ \emph{$(\G,\X)$-small} if (\ref{eq:c}) is not true: $\G^*_{\text{met}} \neq  X_{0} \B_{\varepsilon_0}(e) \cup \ldots \cup X_{N} \B_{\varepsilon_N}(e)$. Otherwise $(\varepsilon_0,\varepsilon_1,\ldots,\varepsilon_n)$ is called \emph{$(\G,\X)$-big}.
\end{definition}

\begin{remark}\label{rem:ext}
\begin{enumerate}
    \item An extension of an $(\G,\X)$-big sequence is $(\G,\X)$-big again.
    
    \item If in a sequence 
\begin{equation}\label{eq*}
(\varepsilon_0,\ldots,\varepsilon_i,\ldots,\varepsilon_j,\ldots,\varepsilon_N)
\end{equation} 
it happens that $\varepsilon_i < \varepsilon_j$, then the sequence
\[(\varepsilon_0,\ldots,\varepsilon_j,\ldots,\varepsilon_j,\ldots,\varepsilon_N)\]
is $(\G,\X)$-big if and only if (\ref{eq*}) is. Hence, we can always restrict ourselves to non-increasing sequences.

    \item The set of all finite small sequences \[T_{\G,\X} = \left\{ \bar{\varepsilon} = (\varepsilon_0,\varepsilon_1,\ldots,\varepsilon_n) \subset \R_{>0} : \bar{\varepsilon}\text{ is }(\G,\X)\text{-small}\right\}\] has a structure of a tree. That is, it is a family of finite sequences such that every initial segment of a sequence in the family also belongs to the family \cite[Section 2]{kech}.
\end{enumerate}
\end{remark}

 An equivalent formulation of Theorem \ref{thm:sim_b} is the following.
\begin{theorem}\label{thm:tree}
$\X$ covers $\G^*_{\text{met}}$ if and only if $T_
{\G,\X}$ has no infinitely long path, i.e. $T_{\G,\X}$ is a well-founded tree \cite[Section 2.E]{kech}.
\end{theorem}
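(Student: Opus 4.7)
The plan is to observe that Theorem \ref{thm:tree} is a direct translation of the equivalence $(1)\Leftrightarrow(2)$ from Theorem \ref{thm:sim_b} into the language of trees of finite sequences of positive reals, so the proof reduces to carefully unpacking the definitions and invoking that earlier theorem.

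First, I would unpack the right-hand side. By the standard definition of a well-founded tree, $T_{\G,\X}$ is well-founded if and only if it admits no infinite branch, i.e.\ there is no sequence $(\varepsilon_0,\varepsilon_1,\ldots)\subset\R_{>0}$ all of whose finite initial segments $(\varepsilon_0,\ldots,\varepsilon_n)$ lie in $T_{\G,\X}$, equivalently are $(\G,\X)$-small. By Remark \ref{rem:ext}(1), the class of $(\G,\X)$-big sequences is closed under extension, so for an infinite sequence, having every initial segment small is equivalent to having no initial segment big.

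Second, I would contrapose: $T_{\G,\X}$ is well-founded precisely when for every infinite sequence $(\varepsilon_0,\varepsilon_1,\ldots)\subset\R_{>0}$ there exists some $N\in\N$ such that $(\varepsilon_0,\ldots,\varepsilon_N)$ is $(\G,\X)$-big, that is
\[
\G^*_{\text{met}} = X_{0}\B_{\varepsilon_0}(e)\cup\ldots\cup X_{N}\B_{\varepsilon_N}(e).
\]
This is exactly condition (2) of Theorem \ref{thm:sim_b}. Since condition (1) of that theorem is the statement $\G^*_{\text{met}}=\bigcup_{m\in\N}X_m$, i.e.\ $\X$ covers $\G^*_{\text{met}}$, applying the already established equivalence $(1)\Leftrightarrow(2)$ of Theorem \ref{thm:sim_b} immediately yields the claim.

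There is no genuine obstacle here: the mathematical content sits entirely in Theorem \ref{thm:sim_b}, and the only thing to verify is that the tree-theoretic reformulation matches the sequence formulation given there. The one subtle point worth spelling out is the use of the monotonicity remark to pass from ``some initial segment is big'' to ``all sufficiently long initial segments are big,'' which is what ensures the equivalence between a well-founded tree and the $\varepsilon$-condition holding uniformly from some index $N$ onwards.
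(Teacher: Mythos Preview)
Your proposal is correct and matches the paper's own treatment: the paper introduces Theorem \ref{thm:tree} explicitly as ``an equivalent formulation of Theorem \ref{thm:sim_b}'' and offers no separate proof, so your unpacking of the definitions to reduce to the $(1)\Leftrightarrow(2)$ equivalence there is exactly what is intended. The monotonicity observation from Remark \ref{rem:ext}(1) is a nice touch but not strictly needed, since condition (2) of Theorem \ref{thm:sim_b} already reads ``there exists $N$'' rather than ``for all sufficiently large $N$.''
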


Similar fact can be stated for $\varepsilon$-condition in Theorem \ref{thm:sim2}, where one need to cover balls.

One can assign a rank $\rho(T)$ to a well-founded tree $T$, which is an ordinal. 
We derive some ranks of a well founded $T_
{\G,\X}$ in Example \ref{ex:per} (3).

 \begin{remark}
 Suppose $G^*_{\text{met, fin}}$ is simple (i.e. $(G,\|\cdot\|)$ satisfies the conditions from Theorem \ref{thm:SimpFam}), then the collection \[\rho(G,\|\cdot\|) = \left\{\rho(T_{r,t}) : t>r>0\right\}\] can be regarded as a family of invariants of $(G,\|\cdot\|)$.
\end{remark}

\section{Metric ultraproducts of symmetric groups}\label{sec:sym}

 Let us apply Theorem \ref{thm:SimpFam} and Corollary \ref{cor:stozek} to symmetric groups $S_n$, alternating groups $A_n$ and $S_\infty=\bigcup_{n\in\N} S_n$. 
 
 \begin{definition}\label{def:hamm}
 Let $\|\cdot\|_H$ be the Hamming norm on $S_n$, defined for $\sigma\in S_n$ as \[\|\sigma\|_H = |\supp(\sigma)|,\ \text{ where }\supp(\sigma)=\{i : \sigma(i)\neq i\}.\] 
 \end{definition}
 
 \begin{remark}\label{spr}
 Let us check Assumption \ref{eq:ass} for \[\S=(S_n,\|\cdot\|_H)_{n\in\N\cup\{\infty\}}.\] Observe that any two conjugate elements $\sigma_1$ and $\sigma_1$ from $S_n$ can be conjugated by element with a support contained in $\supp(\sigma_1) \cup \supp(\sigma_2)$. Therefore, a function $F_{\S}(x)=2x$ witnesses that Assumption \ref{eq:ass} holds for $\S$. Observe that $F_\S$ is a linear function, so this assumption with same $F_\S$ also holds for a modified family $\S'=(S_n,c_n\|\cdot\|_H)_{n\in\N\cup\{\infty\}}$, where each $c_n\in\R_{>0}$, which we use in Example \ref{ex:per}.
 \end{remark}
 
 We use a classical result of Brenner from \cite{bren}. One say that $\sigma\in A_n$ is \emph{nonexceptional} if $\sigma^{S_n} = \sigma^{A_n}$. According to \cite[11.1.5, p. 299]{scott}, $\sigma$ is exceptional if and only if all cycles in $\sigma$ have different odd lengths. 
 
 
 \begin{lemma} \label{lem:lulu} Fix natural $n\geq 5$.
 \begin{enumerate}
     \item \cite[3.05]{bren} Let $\sigma\in A_n$ be a nonexceptional permutation with full support $\|\sigma\|_H=n$. If $\ll_r(\sigma)\geq \frac{n-1}{2}$, then $A_n=C_4(\sigma, A_n)$, that is every element of $A_n$ is a product of 4 conjugates of $\sigma$ and $\sigma^{-1}$.
     \item For any $\tau\in S_n$, $\|\tau\|_H\geq 5$ there is a nonexceptional $\sigma\in A_n$ with $\|\tau\sigma^{-1}\|_H\leq 2 + 3=5$ and $\supp(\sigma)=\supp(\tau)$. Moreover  \[S_n = C_{16 + 4\frac{n}{\|\sigma\|_H}}(\sigma,S_n) B_2(e).\]
 \end{enumerate}
 \end{lemma}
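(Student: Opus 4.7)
Part~(1) is cited directly from Brenner~\cite{bren}; my plan addresses part~(2) only, which splits into constructing $\sigma$ and verifying the ``moreover'' covering of $S_n$.

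For the construction, set $S = \supp(\tau)$ and $k = |S| \ge 5$, and modify $\tau|_S$ in at most five positions, decomposed as ``$2$ for parity $+\, 3$ for exceptionality''. If $\tau \in A_n$ is already nonexceptional, take $\sigma = \tau$. If $\tau$ is odd, multiply by a transposition $(a\ b)$ whose support lies in $S$: when some cycle of $\tau|_S$ has length $\ge 3$, pick $a,b$ as the first and third points of that cycle; when $\tau|_S$ consists solely of $2$-cycles (necessarily at least two, since $|S| \ge 5$), pick $a,b$ from two distinct $2$-cycles, which merges them into a single $4$-cycle. A direct computation shows that in each subcase exactly two values of $\tau$ change, no new fixed points on $S$ are introduced, parity is flipped, and an even-length cycle appears (forcing nonexceptionality). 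If after the parity-fix the result is still exceptional, some cycle has odd length $\ge 5$ and a further modification inside it by a $3$-cycle (altering three more positions, preserving parity and support) introduces either an even-length cycle or two cycles of equal odd length, making $\sigma$ nonexceptional. In every case $\supp(\sigma) = S$ and $\|\tau\sigma^{-1}\|_H \le 2 + 3 = 5$.

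For the ``moreover'' claim, write $k = \|\sigma\|_H$. Since $\sigma \in A_n$, every product of conjugates of $\sigma^{\pm 1}$ in $S_n$ lies in $A_n$, so it suffices to prove $A_n \subseteq C_{16 + 4n/k}(\sigma, S_n)$; the factor $\B_2(e)$ in the statement then absorbs the parity discrepancy between $A_n$ and $S_n$. Fix $\pi \in A_n$, partition $\{1, \dots, n\}$ into $L \le \lceil n/k \rceil$ blocks of size $\le k$, and write $\pi = \pi_1 \pi_2 \cdots \pi_L$ with each $\pi_j \in A_n$ supported in the union of the $j$-th block and a fixed auxiliary ``scratch'' $k$-block, used both to evenify each $\pi_j$ and to absorb cycles of $\pi$ that straddle block boundaries. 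For each $j$, conjugate $\sigma$ inside $S_n$ to $\sigma_j$ whose support is exactly the $k$-set in question and whose longest cycle has length $\ge (k-1)/2$; part~(1) then yields $\pi_j \in C_4(\sigma_j, A_n) \subseteq C_4(\sigma, S_n)$. Multiplying gives $\pi$ as a product of at most $4L$ conjugates, up to a bounded correction supported on the scratch block, which is itself a product of at most $16$ further conjugates (by one additional application of part~(1) on a suitably enlarged scratch). Hence $\pi \in C_{4L + 16}(\sigma, S_n) \subseteq C_{16 + 4n/k}(\sigma, S_n)$.

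The principal obstacle is the second part: producing the decomposition $\pi = \pi_1 \cdots \pi_L$ in which every $\pi_j$ is simultaneously even, supported on a single $k$-set, nonexceptional, and possesses a cycle of length $\ge (k-1)/2$ — the full hypothesis list of part~(1). A naive blockwise restriction of $\pi$ fails on each count: cycles of $\pi$ can cross block boundaries, individual restrictions need not be even, and there is no guarantee of long cycles in each piece. Routing all such corrections through a fixed auxiliary scratch $k$-block resolves these issues at the cost of a bounded additive overhead (the $16$), but making this rigorous requires careful combinatorial bookkeeping. The construction of $\sigma$ in the first part is, by comparison, routine; the only subtlety is that the small correcting multiplications must not destroy full support on $S$, which is precisely why the specific choice of transposition (first-and-third of a length-$\ge 3$ cycle, or between two distinct $2$-cycles) is essential.
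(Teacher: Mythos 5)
The first half of your part~(2) --- producing a nonexceptional $\sigma$ with $\supp(\sigma)=\supp(\tau)$ and $\|\tau\sigma^{-1}\|_H\le 2+3$ --- is fine and runs parallel to the paper, which fixes parity by a transposition and then, if the result is exceptional, replaces its longest cycle $\rho(2m+1)$ by $\pi(2m+1)$, a modification by the $3$-cycle $(m-4,m-2,m)$ that keeps the support and creates two $2$-cycles. The gap is in the ``moreover'' part, and it is twofold. First, a step as written would fail: you propose to conjugate $\sigma$ to an element $\sigma_j$ supported on a given $k$-set ``whose longest cycle has length $\ge (k-1)/2$''. Conjugation preserves cycle type, and $\sigma$ inherits its cycle structure from an arbitrary $\tau$ (e.g.\ a product of many $3$-cycles), so no conjugate of $\sigma$ need have any long cycle; hence the hypothesis of part~(1) cannot be arranged blockwise in this way, and your per-block application of~(1) collapses. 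Second, the decomposition $\pi=\pi_1\cdots\pi_L$ with each $\pi_j$ even, supported on one block plus a ``scratch'' block, and absorbing boundary-crossing cycles, together with the claimed $16$-conjugate correction, is never actually constructed --- you yourself flag it as the principal obstacle --- yet this is exactly the content of the covering statement. In addition the count does not close: with $L\le\lceil n/k\rceil$ you get $4L+16$, which can exceed $16+4\frac{n}{k}$ when $k\nmid n$.

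The paper avoids decomposing the target altogether. It builds a \emph{single} full-support element $\sigma_\infty\in C_{4+\lfloor n/k\rfloor}(\sigma,S_n)$: partition $\{1,\dots,n\}=X_1\cup\dots\cup X_{\lfloor n/k\rfloor}\cup Y$ with $|X_i|=k$, $|Y|<k$, place one conjugate $\sigma_i=\sigma^{\rho_i}$ with $\supp(\sigma_i)=X_i$ on each block (one conjugate per block, no hypothesis of~(1) needed there), and take $\sigma_0\in C_4(\sigma,S_n)$ with $\supp(\sigma_0)=Y$, coming from $\alt(X)=C_4(\sigma,\alt(X))$ on $X=\supp(\sigma)$; then $\sigma_\infty=\sigma_1\cdots\sigma_{\lfloor n/k\rfloor}\sigma_0$ moves every point. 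Part~(1) is then applied \emph{once}, to $\sigma_\infty$ inside the full $A_n$, where its hypotheses (full support, nonexceptionality) are satisfied by construction, giving $A_n=C_4(\sigma_\infty,A_n)\subseteq C_{4(4+\frac{n}{k})}(\sigma,S_n)=C_{16+4\frac{n}{k}}(\sigma,S_n)$, and $\B_2(e)$ absorbs the transposition needed to pass from $A_n$ to $S_n$. This is the idea your proposal is missing: transfer the burden of part~(1) from the (uncontrollable) blockwise pieces of $\pi$ to one manufactured full-support product of conjugates of $\sigma$. If you want to salvage your outline, you would have to either prove the blockwise hypotheses can be met (they cannot, for the cycle-length reading) or restructure along the paper's lines.
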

 \begin{proof} 
 (2) We may assume that $\tau\in A_n$ (by multiplying by a transposition).   Suppose $\tau$ is exceptional.  Define a cycle $\rho(m) = (1,2,\ldots,m-1,m)$ and \[\pi(m) = (1,2,\ldots,m-4)\circ (m-3,m-2)\circ (m-1,m),\] for $m\in\N_{\geq 5}$. Observe that \[\rho(m) \pi(m)^{-1} = (m-4,m-2,m).\]
 Clearly $\rho(2m+1),\pi(2m+1)\in A_{2m+1}$. Since $\tau$ is exceptional, it can be written as \[\tau = \tau' \circ \rho(2m+1)\] for some $m\geq 5$ (where $\tau'$ fixes pointwise $\{1,2,\ldots,m-1,m\}$). Then $\sigma = \tau'\circ \pi(2m+1)$ is nonexceptional (as it has two 2-orbits) with $\|\tau\sigma^{-1}\|_H\leq 3$. To prove the last part, it is enough to argue that $A_n=C_{16+\frac{4n}{\|\sigma\|_H}}(\sigma,S_n)$, as $\B_2(e)$ contains all transpositions. Our aim is to find $\sigma_\infty\in C_{4+\frac{n}{\|\sigma\|_H}}(\sigma,S_n)$ of the full support $\supp(\sigma_\infty)=\{1,\ldots,n\}$. Then (1) and (2) gives the conclusion. 
 
 Let us construct such $\sigma_\infty$ (using the ideas from \cite[Lemma 2.5]{elek} and \cite{thomas}). Let $X_1=\supp(\sigma)\subseteq \{1,2,\ldots,n\}$ and $k=|X|=\|\sigma\|_H$. There is a partition \[\{1,2,\ldots,n\} = X_1\cup\ldots\cup X_{\lfloor\frac{n}{k} \rfloor}\cup Y,\] where $|X_i|=|X|=k$ and $|Y|=n-k\cdot\lfloor\frac{n}{k} \rfloor<k$.
  Clearly $\sigma\in \alt(X)=A_k$ satisfies (2), that is $\alt(X)=C_4(\sigma, \alt(X))$. Hence, there are $\rho_i\in S_n$ such that $\sigma_i:= \sigma^{\rho_i}\in \alt(X_i)$ and $\supp(\sigma_i) = X_i$. There is also $\sigma_0\in C_4(\sigma,S_n)$ such that $\supp(\sigma_0) = Y$. Consider \[\sigma_\infty = \sigma_1 \sigma_2\ldots \sigma_{ \lfloor\frac{n}{k} \rfloor } \sigma_0.\] Clearly $\supp(\sigma_\infty)=\{1,\ldots,n\}$ and $\sigma_\infty\in C_{4+ \lfloor\frac{n}{k} \rfloor }(\sigma,S_n)$.
 \end{proof}
 
 \begin{example} \label{ex:per}
 \begin{enumerate}
    \item Let $\G=\left(S_n, \frac{1}{n}\|\cdot \|_H\right)_{n \in \N}$. Then $\G^*_{\text{met, fin}}$ is called a \emph{universal sofic group} \cite{pestov1}. It is known that ${\G}^*_{\text{met, fin}}$ is simple group \cite[Proposition 2.3(5)]{elek}.  In fact it is boudedly simple (see Definition \ref{def:bsim}, Theorem \ref{cor:bbsim}). This can be proved as in (2) below.
    
    \item Fix a sequence $\bar{c}=\left(c_n\right)_{n\in\N}$ of positive real numbers such that $\lim_{n\to\U}c_n=0$. Consider \[\G=\left(S_\infty, c_n\|\cdot \|_H\right)_{n \in \N},\]
    where $S_\infty=\bigcup_{n\in\N} S_n$. Then $\G^*_{\text{met, fin}}$ is a simple group by Corollary \ref{cor:stozek} and Lemma \ref{lem:lulu}. Indeed, let $t>r>0$ and let $\bar{\varepsilon} = (\varepsilon_0, \varepsilon_1,\ldots)\subset \R_{>0}$. Define $N= 16+4\frac{t}{r}$ and take arbitrary $\varepsilon >0$. Suppose $n\in\N$ is such that $c_n<\frac{\varepsilon_N}{2}$ and $c_n<\frac{\varepsilon}{5}$. There are $\U$-many such $n$. Take $\tau\in S_\infty$ with $\|\tau\|_H \in \left(\frac{r}{c_n},\frac{t}{c_n} \right]$. There is a  nonexceptional $\sigma\in A_n$ (by (\ref{lem:lulu}(2)) with $\left\|\tau\sigma^{-1}\right\|_H\leq 5<\frac{\varepsilon}{c_n}$ and 
     \[\B_{\frac{t}{c_n}}(e) \subseteq  C_N\left(\sigma,S_\infty\right) \B_{\frac{\varepsilon_N}{c_n}}(e).\] Hence $\G^*_{\text{met, fin}}$ is  simple by Corollary \ref{cor:stozek}.
     %
    %
    
    \item Ranks of a well founded $T_
{\G,\X}$ in this Example are $\leq 16+4\frac{t}{r}$.
\end{enumerate}
\end{example}


\begin{example}\label{ex:cover}  An equality (\ref{eq:22}) from Theorem \ref{thm:ccomp} cannot be simplified to apparently simpler condition $\G^*_{\text{met}} = X_1 \cup\ldots\cup X_N$. 
Indeed, consider $\G=\left(S_n, \frac{1}{n}\|\cdot \|_H\right)_{n \in \N}$ and consider subsets from Example \ref{ex:internal} (\ref{balls}):
\begin{align*}
X_0 = \frac{\prod_{n\in\N} \left\{\sigma \in S_n : \frac{1}{n}\|\sigma\|_H=1 \right\}}{\E},\ \ 
X_m = \frac{\prod_{n\in\N} \B_{1-\frac{1}{m}}(e,G_n)}{\E}, \text{ for } m>0.
\end{align*}
Of course, $X_0 \cup\ldots\cup X_N= X_0 \cup X_N \neq \G^*_{\text{met}}$, for any natural number $N$. Observe that $\G^*_{\text{met}} =\bigcup_{n\in\N}X_n$ as \[ \left\{  (\sigma_n) \in \prod_{n\in\N}S_n : \lim_{n\to\U}\frac{1}{n}\|\sigma_n\|_H = 1 \right\} = \E \cdot \left\{  (\sigma_n) \in \prod_{n\in\N}S_n : \forall\ n\in\N\ \  \frac{1}{n}\|\sigma_n\|_H = 1 \right\},\] where $\E$ is from (\ref{inf}).  
 \end{example}

\begin{question}
Example \ref{ex:per} provides many ultraproducts of groups which are simple. The natural question is: are they pairwise isomorphic for a fixed ultrafilter $\U$? 
To be more precise, for $\bar{c}=(c_n)_{n\in\N} \subset \R_{>0}$ define $\S(\bar{c})=\left(S_\infty, c_n\|\cdot \|_H\right)_{n \in \N}$ and consider metric ultraproduct $\S(\bar{c})^*_{\text{met, fin}}$. Observe that if $\lim_{n\to\U}\frac{c_n}{d_n} \neq 0, +\infty$, then $\S\left(\bar{c}\right)^*_{\text{met, fin}}= \S\left(\bar{d}\right)^*_{\text{met, fin}}$. It is also true in general? It would be interesting to determine for example whether  \[\left(S_\infty, \frac{1}{n}\|\cdot \|_H\right)^*_{\text{met, fin}} \cong \left(S_\infty, \frac{1}{n^2}\|\cdot \|_H\right)^*_{\text{met, fin}}\] holds for any non-principal ultrafilter? We suspect also that each $\S(\bar{c})^*_{\text{met, fin}}$ is a universal sofic group \cite{pestov1}.
\end{question}

\begin{example}\label{ex:dos}
Consider again $\S = \left(S_n, \frac{1}{n}\|\cdot\|_H\right)_{n\in \N}$. Then metric ultraproduct $\S^*_{\text{met}}$ is a perfect group (as it is simple), however discrete ulraproduct $\S^*$ is not perfect, as for any positive $n \in \N$ a group $S_n$ is not perfect (as $[S_n,S_n]=A_n)$.
\end{example}

\section{Simple groups via approximation} \label{sec:app}

Theorem \ref{thm:iet} below yields more simple ultrapowers. Intuitively it says the following: if a metric group $H$ is approximated (in a certain sense) by a family $\G=(G_n)_{n\in\N}$ of metric groups and metric ultraproduct $\G^*_{\text{met, fin}}$ is simple, then metric ultrapower $H^*_{\text{met, fin}}$ of $H$ is also simple. 
 
We prove in Theorem \ref{thm:iett}, as a corollary of \ref{thm:iet}, that  $\IET$ group (the group of all interval exchange transformations of $[0,1]$) with a natural metric is metrically simple (Definition \ref{def:metsim}). 


\begin{theorem} \label{thm:iet}
Let $\G=\left(G_n, \|\cdot\|'_n\right)_{n \in \N}$ and $\H=(H, \|\cdot\|)$ be metric groups and fix a nonprincipal ultafilter $\U$. Suppose that $\G^*_{\text{met, fin}}$ is a simple group (boundedly simple resp.). Assume that the following condition is true for $\H$: 
\begin{quote}
for any $t>0$ and any $\varepsilon >0$ and any $h_1,h_2\in H$ with $\|h_1\|,\|h_2\|\leq t$ there are $\U$-many $n\in\N$ such that there are $g_1,g_2\in G_n$ and $\phi_{n}\colon G_n \rightarrow H$ an isometric homomorphism (i.e. $\left\|\phi_{n}(g)\right\| = \|g\|'_n$) satisfying $\left\|\phi_n(g_i){h_i}^{-1}\right\| <\varepsilon$, for $i=1,2$.
\end{quote}
Then $H^*_{\text{met, fin}}$ is a simple group (boundedly simple resp.) with respect to $\U$.
\end{theorem}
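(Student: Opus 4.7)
The plan is to verify condition (3) of Theorem \ref{thm:SimpFam} applied to the constant family $(H,H,\ldots)$, which yields simplicity of $H^*_{\text{met, fin}}$. The approximation hypothesis is used to transfer the decomposition of elements provided by simplicity of $\G^*_{\text{met, fin}}$ down to $H$ via the isometric homomorphisms $\phi_{n}$; the ultraproduct structure on $H$ then absorbs the approximation errors exactly.

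I would fix representatives $(h_i^{(k)})_k \subset H$ of $\bar h_1, \bar h_2 \in H^*_{\text{met, fin}}$ with $\bar h_1 \neq e$, together with $r \in (0,\|\bar h_1\|)$, $t > \|\bar h_2\|$, and a sequence $(\varepsilon_n) \subset \R_{>0}$. Theorem \ref{thm:SimpFam}(2) applied to $\G^*_{\text{met, fin}}$ yields $N \in \N$ and $B \in \U$ such that for every $n \in B$ and every $g \in G_n$ with $\|g\|'_n > r$ the cover $\B_t(e) \subseteq \bigcup_{j=1}^{N} C_j(g,G_n)\B_{\varepsilon_j}(e)$ holds in $G_n$. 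For each $k$, the hypothesis applied to the pair $(h_1^{(k)}, h_2^{(k)})$ with tolerance $\delta_k := 2^{-k}$ supplies $A_k \in \U$; choose $m(k) \in A_k \cap B$ together with the witnesses $g_1^{(k)}, g_2^{(k)} \in G_{m(k)}$ and the isometric homomorphism $\phi_k := \phi_{m(k)}\colon G_{m(k)} \to H$.

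For $\U$-many $k$ the estimates $\|g_1^{(k)}\|'_{m(k)} > r$ and $\|g_2^{(k)}\|'_{m(k)} < t$ hold (since $\phi_k$ is isometric and $\delta_k \to 0$), so the cover condition in $G_{m(k)}$ produces
\[
g_2^{(k)} = (g_1^{(k)})^{\epsilon_{k,1}y_{k,1}}\cdots(g_1^{(k)})^{\epsilon_{k,j_k}y_{k,j_k}} \cdot b_k, \qquad j_k \le N,\ \|b_k\|'_{m(k)} < \varepsilon_{j_k}.
\]
Assumption \ref{eq:ass} for $\G$ lets me impose $\|y_{k,i}\|'_{m(k)} \le F_\G(t)$, hence $\|\phi_k(y_{k,i})\| \le F_\G(t)$ uniformly in $k,i$. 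Applying $\phi_k$ yields the exact identity $\phi_k(g_2^{(k)}) = \prod_i \phi_k(g_1^{(k)})^{\epsilon_{k,i}\phi_k(y_{k,i})} \cdot \phi_k(b_k)$ in $H$. A pigeonhole argument fixes $j_k = j^*$ and $\epsilon_{k,i} = \epsilon_i^*$ for $\U$-many $k$. Passing to the ultraproduct, $\bar u_i := (\phi_k(y_{k,i}))_k/\E \in H^*_{\text{met, fin}}$ by the uniform bound, $\bar b := (\phi_k(b_k))_k/\E$ has $\|\bar b\| \le \varepsilon_{j^*}$, and because $\|\phi_k(g_i^{(k)})(h_i^{(k)})^{-1}\| \le \delta_k$ is infinitesimal, $(\phi_k(g_i^{(k)}))_k/\E = \bar h_i$ in $H^*_{\text{met, fin}}$. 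Continuity of multiplication (Lemma \ref{lem:propp}) therefore gives
\[
\bar h_2 \,=\, \bar h_1^{\epsilon_1^* \bar u_1}\cdots\bar h_1^{\epsilon_{j^*}^* \bar u_{j^*}}\cdot \bar b \quad \text{in } H^*_{\text{met, fin}},
\]
which places $\bar h_2$ in $\bigcup_{j=1}^N C_j(\bar h_1, H^*_{\text{met, fin}})\B_{\varepsilon_j}(e)$, i.e., verifies condition (3) of Theorem \ref{thm:SimpFam} for $(H,H,\ldots)$; thus $H^*_{\text{met, fin}}$ is simple. The boundedly simple case is handled by exactly the same argument, the bound $N$ being uniform in $\bar h_2$ of bounded norm whenever $\G^*_{\text{met, fin}}$ is boundedly simple.

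The main obstacle is the coordination of the two ultraproduct structures: one must arrange a diagonal choice of $m(k)$ so that both the $\delta_k$-approximation of $(h_1^{(k)}, h_2^{(k)})$ and the $\G$-cover condition operate inside the same group $G_{m(k)}$, which rests on $A_k \cap B \in \U$ being nonempty. The other essential ingredient is Assumption \ref{eq:ass} on $\G$, which confines the conjugators $y_{k,i}$ in a ball of fixed radius $F_\G(t)$ so that the sequences $(\phi_k(y_{k,i}))_k$ define \emph{finitary} elements $\bar u_i \in H^*_{\text{met, fin}}$ rather than escaping to infinite norm in $H^*_{\text{met}}$; this moreover supplies Assumption \ref{eq:ass} for the target family $(H,H,\ldots)$ required in order to invoke Theorem \ref{thm:SimpFam} in the final step.
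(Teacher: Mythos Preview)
Your argument is essentially correct but takes a more circuitous route than the paper. The paper verifies condition (2) of Remark \ref{covid} (equivalently, Theorem \ref{thm:SimpFam}(2)) directly at the level of the single group $H$: given $h_1,h_2\in H$ with $\|h_1\|>r$ and $\|h_2\|<t$, one applies the approximation hypothesis \emph{once} with a fixed tolerance $\varepsilon'=\min\{\tfrac{r}{2},t,\tfrac{\varepsilon_0}{2},\ldots,\tfrac{\varepsilon_N}{2}\}$, picks a single index $n$ in the intersection of the resulting $\U$-large set with the set $I$ coming from Theorem \ref{thm:SimpFam}(2) for $\G$ (applied with parameters $(\tfrac{r}{2},2t)$ and halved $\varepsilon_j$'s), decomposes $g_2$ inside $G_n$, and pushes the decomposition through $\phi_n$ into $H$. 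The approximation error $\varepsilon'$ is then absorbed in the enlarged balls $\B_{\varepsilon_j}(e)$, and one is done without ever touching $H^*_{\text{met,fin}}$.

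By contrast you work inside $H^*_{\text{met,fin}}$: you approximate the representatives $h_i^{(k)}$ with shrinking tolerances $\delta_k\to 0$, diagonalise via $m(k)\in A_k\cap B$, bound the conjugators via Assumption \ref{eq:ass} on $\G$, fix the decomposition shape by pigeonhole, and pass to the limit. This works, but the detour buys nothing here---the conjugator bound and the pigeonhole step are additional machinery the paper avoids by staying in $H$. One assertion is also unjustified: there is no reason Assumption \ref{eq:ass} for $\G$ should transfer to the constant family $(H,H,\ldots)$, since the approximation hypothesis says nothing about conjugacy in $H$ outside the images $\phi_n(G_n)$. In the intended applications (e.g.\ $\IET$) the norm on $H$ is bounded and Assumption \ref{eq:ass} holds trivially, but as written your closing sentence is a gap rather than an argument.
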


\begin{proof} Let us prove $(2)$ of 
Remark \ref{covid} for $\H$. Fix a positive numbers $t>r>0$ and an infinite sequence of positive reals $(\varepsilon_0, \varepsilon_1,\ldots)$. 

Since $\G^*_{\text{met, fin}}$ is a simple group, Theorem  \ref{thm:SimpFam} (2) applied to $\G$ and to $\left(\frac{\varepsilon_0}{2}, \frac{\varepsilon_1}{2},\ldots\right)$ gives $N\in\N$ such that
\begin{equation}\label{eq:i0}
I=\left\{n \in \N: \left(\frac{\varepsilon_0}{2}, \frac{\varepsilon_1}{2},\ldots,\frac{\varepsilon_N}{2}\right)\text{ is }\left(\frac{r}{2},2t\right)\text{-big for }G_n \right\} \in \U,
\end{equation} see Definition \ref{def:rt} for 'big' notation. 
Suppose $h_1\in H$ with $\|h_1\|\ > r$ . Take any $h_2\in B_{t}(e)$ in $H$. We have to prove that $h_2\in C_j(h',H) B_{\varepsilon_j}(e)$ for some $0\leq j \leq N$ and $h'\in B_{\varepsilon_j}(h_1)$. Define \[\varepsilon'= \min\left\{\frac{r}{2},t,\frac{\varepsilon_0}{2}, \frac{\varepsilon_1}{2 },\ldots,\frac{\varepsilon_N}{2} \right\}>0\]
and take $n \in I$ suitable for $\varepsilon'>0$, $h_1,h_2\in H$, as in the assumption of the theorem. There exist $g_1,g_2 \in G_n$ and $\phi_n\colon G_n\to H$ such that $\left\|\phi_n(g_i)h_i^{-1}\right\|<~\varepsilon'$, $i=1,2$. Then \[\left\|g_1\right\|'_n = \left\|\phi_{n}(g_1)\right\| \in \Big[\|h_1\| - \left\|h_1^{-1}\phi_{n}(g_1)\right\|, \|h_1\| + \left\|h_1^{-1}\phi_{n}(g_1)\right\|\Big]\subseteq \left(\frac{r}{2},2t\right).\] Similarly $\|g_2\|'_n < 2t$. Therefore by (\ref{eq:i0}) (since $n\in I$) 
 \begin{equation} \label{eq:g}
 g_2 \in \B_{2t}(e)\subseteq \bigcup_{j=0}^N C_j(g_1,G_n) \B_{\frac{\varepsilon_j}{2}}(e).
 \end{equation}
Applying $\phi_{n}$ to (\ref{eq:g}) gives that 
$\phi_{n}(g_2)\in \bigcup_{j=0}^N C_j(\phi_{n}(g_1),H) \B_{\frac{\varepsilon_j}{2}}(e)$.
Since $\left\|\phi_{n}(g_2)h_2^{-1}\right\|< \varepsilon'\leq \frac{\varepsilon_j}{2}$, we conclude that $h_2=h_2 \phi_{n}(g_2)^{-1} \phi_{n}(g_2)\in \B_{\varepsilon'}(e) \bigcup_{j=0}^{N} C_j(\phi_{n}(g_1),H) \B_{\frac{\varepsilon_j}{2}}(e) \subseteq \bigcup_{j=0}^{N} C_j(\phi_{n}(g_1),H) \B_{\varepsilon_j}(e)$, which finishes the proof (for $h' = \phi_{n}(g_1)$).
\end{proof}

\section{$\IET$ group} \label{sec:iet}

Let us apply Theorem \ref{thm:iet} to get interesting examples of simple metric ultraproducts.

An \emph{interval exchange transformation} is a  bijective map $f\colon [0,1] \rightarrow [0,1]$ which is piecewise translation,  continuous on the right with finitely many discontinuity points. Note that any such transformation is given by a pair: 
a sequence of real numbers 
$0=a_1<a_2<\ldots<a_n=1$ and a permutation $\sigma \in S_n$. Denote such transformation by $T=T(a_1,a_2,\ldots,a_n, \sigma)$. Such $T$ translates $\left[a_i,a_{i+1}\right)$ onto $\left[a_{\sigma(i)},a_{\sigma(i+1)}\right)$.

The set of all interval exchange transformation with composition form a group, which we denote by $\IET$. A bi-invariant norm of an element $g \in \IET$ is a Lebesgue measure of its support:
\[\|g \|_\mu= \mu(\supp(g)).\]

\begin{theorem} \label{thm:iett}
Any metric ultrapower of $\IET$ with respect to $\|\cdot\|_\mu$  is a simple group, in fact boundedly simple (Definition \ref{def:bsim}). 
\end{theorem}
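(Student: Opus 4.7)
The plan is to apply Theorem~\ref{thm:iet} to transfer bounded simplicity from a family of symmetric groups to $\IET$. I take $\G=(S_n,\tfrac{1}{n}\|\cdot\|_H)_{n\in\N}$ as the approximating family and $\H=(\IET,\|\cdot\|_\mu)$ as the target. By Example~\ref{ex:per}(1) combined with Corollary~\ref{cor:bbsim}---this is the bounded case, since the norms $\tfrac{1}{n}\|\cdot\|_H$ are uniformly bounded by $1$---the group $\G^*_{\text{met, fin}}$ is boundedly simple. It therefore suffices to verify the approximation hypothesis of Theorem~\ref{thm:iet}: for any $h_1,h_2\in\IET$ with $\|h_i\|_\mu\le t$ and any $\varepsilon>0$, produce $\U$-many $n$ together with an isometric homomorphism $\phi_n\colon S_n\to\IET$ and elements $g_1,g_2\in S_n$ satisfying $\|\phi_n(g_i)h_i^{-1}\|_\mu<\varepsilon$.

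The isometric homomorphisms $\phi_n$ I have in mind are \emph{not} the standard action on the intervals $[\tfrac{i-1}{n},\tfrac{i}{n})$; that obvious choice is useless for $h_i$ with irrational translation, because then every element of its image disagrees with $h_i$ almost everywhere. Rather, any partition $[0,1]=P_1\sqcup\cdots\sqcup P_n$ into $n$ pieces of equal measure $\tfrac{1}{n}$ (each piece a finite union of subintervals) yields an isometric $\phi_n\colon S_n\to\IET$ by sending a transposition $(ij)$ to the IET that exchanges $P_i$ with $P_j$; indeed $\supp(\phi_n(\sigma))=\bigcup_{i\in\supp(\sigma)}P_i$, so $\|\phi_n(\sigma)\|_\mu=|\supp(\sigma)|/n$. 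Given $h_1,h_2$ I would design the $P_i$'s adapted to them. Form the common refinement $c_0=0<c_1<\cdots<c_K=1$ of the source and image breakpoints of $h_1$ and $h_2$; both $h_i$ permute the intervals $I_j=[c_{j-1},c_j)$ by translation. Via a Rokhlin-tower-type construction applied to the joint action of $\langle h_1,h_2\rangle$ on further sub-intervals of the $I_j$'s, I group these sub-intervals into $n$ equal-measure blocks $P_1,\ldots,P_n$ so that each $h_i$ agrees with some permutation $\phi_n(g_i)$ off an exceptional set $E$ of measure less than $\varepsilon$, yielding $\|\phi_n(g_i)h_i^{-1}\|_\mu\le|E|<\varepsilon$.

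The main obstacle is the joint adaptation of the partition $\{P_i\}$ to both $h_1$ and $h_2$ simultaneously. For a single IET $h$, a classical Rokhlin tower $B,h(B),\ldots,h^{n-1}(B)$ consisting of pairwise disjoint sets of measure $|B|$ close to $1/n$ (attainable along $\U$-many $n$, for example continued-fraction denominators when $h$ is an irrational rotation) produces pieces $P_i=h^{i-1}(B)$ on which $h$ acts exactly by its own translation, with total exceptional mass $1-n|B|$ tending to zero. Handling $h_1$ and $h_2$ simultaneously is a finitary combinatorial problem---one must accommodate the orbit structure of $\langle h_1,h_2\rangle$ on the $K$ linearity intervals $I_j$---but since $K$ is fixed while $n$ can be made arbitrarily large, the joint construction goes through for cofinitely many $n$, hence for $\U$-many. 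With the approximation hypothesis in hand, Theorem~\ref{thm:iet} delivers bounded simplicity of $\IET^*_{\text{met, fin}}$.
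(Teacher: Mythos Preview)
Your proposal and the paper's proof share the same skeleton---apply Theorem~\ref{thm:iet} with $\G=(S_n,\tfrac{1}{n}\|\cdot\|_H)$---but you correctly reject the paper's verification of the approximation hypothesis. The paper rounds the breakpoints of $h_i$ to multiples of $1/n$, takes the standard embedding $\phi(\delta)=T(1/n,\ldots,n/n,\delta)$, and asserts $\|\phi(\sigma'_i)h_i^{-1}\|_\mu<\max(k/n,l/n)$. This bound is false: rounding the breakpoints perturbs every translation amount, and two translations by distinct amounts disagree on their entire common domain. For the rotation $R_\alpha$ with $\alpha$ irrational one gets $\|\phi(\sigma)R_\alpha^{-1}\|_\mu=1$ for every $\sigma\in S_n$, exactly as you say.

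However, your replacement argument has gaps of its own. The minor one: $h_1,h_2$ do not in general permute the intervals $I_j$ of the common refinement of their source and image breakpoints; already for a single rotation $R_\alpha$ with $0<\alpha<\tfrac12$ the refinement intervals are $[0,\alpha),\ [\alpha,1-\alpha),\ [1-\alpha,1)$, and $R_\alpha([0,\alpha))=[\alpha,2\alpha)$ is not one of them. The serious one: the step ``Rokhlin-tower-type construction applied to the joint action of $\langle h_1,h_2\rangle$'' is the crux and is not justified. A Rokhlin tower for a single $h$ does yield a partition on which $h$ acts nearly by a cyclic permutation, and this can be patched to have pieces of measure exactly $1/n$; but Theorem~\ref{thm:iet} demands \emph{one} isometric $\phi_n$ through which $h_1$ and $h_2$ are simultaneously approximated. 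That amounts to a Rokhlin/quasi-tiling lemma for the action of the generally non-abelian group $\langle h_1,h_2\rangle$, which you have not supplied; such a lemma would follow from amenability of $\langle h_1,h_2\rangle$, but amenability of finitely generated subgroups of $\IET$ is a well-known open problem. The closing sentence ``since $K$ is fixed while $n$ can be made arbitrarily large, the joint construction goes through'' does not fill this gap.
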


\begin{proof}
Consider $\G=\left( S_n, \frac{1}{n}\|\cdot\|_H \right)_{n \in \N}$, where $\|\cdot\|_H$ is the Hamming norm and $\H=(\IET, \|\cdot~\|_\mu)$. Example \ref{ex:per} implies that $\G^*_{\text{met, fin}}$ is a simple group (in fact $\G^*_{\text{met, fin}}$ is boundedly simple). It is enough to show, by Theorem \ref{thm:iet}, that for any $\varepsilon >0$ there is a natural number $N$ such that for any $n \geq N$ and any $h_1, h_2 \in \IET$ there are $\sigma_1, \sigma_2 \in S_n$ and  an isometric homomorphism $\phi \colon S_n \to \IET$ such that $\left\|\phi(\sigma_i)h_i^{-1}\right\| <\varepsilon$, $i=1,2$.

Let $h_1, h_2 \in \IET$ where 
$h_1= T(a_1,a_2,\ldots,a_k, \sigma_1)$, and $h_2= T(b_1,b_2,\ldots,b_l, \sigma_2)$.
Let $n \in \N$ be such that 
$n>\frac{k}{\varepsilon}$, $n>\frac{l}{\varepsilon}$  and
$\frac{1}{n} < \min\left( \{a_{i+1}-a_i: i=1,2,\ldots,k \} \cup \{b_{i+1}-b_i: i=1,2,\ldots,l \}\right)$. Let $a'_i =\frac{\lfloor n a_i \rfloor }{n}$ and $b'_j =\frac{\lfloor n b_j \rfloor }{n}$ for $1\leq i \leq k$, $1\leq j \leq l$. Define $h'_1= T(a'_1,a'_2,\ldots,a'_k, \sigma_1)$, and $h'_2= T(b'_1,b'_2,\ldots,b'_l, \sigma_2)$.
Elements $h'_1, h'_2$ are given also by a sequence $\left(\frac{1}{n}, \frac{2}{n},\ldots\frac{n}{n}\right)$ and permutations $\sigma'_1, \sigma'_2 \in S_n$:
\[h'_1= T\left(\frac{1}{n}, \frac{2}{n},\ldots\frac{n}{n}, \sigma'_1 \right),\text{ and }h'_2= T\left(\frac{1}{n}, \frac{2}{n},\ldots\frac{n}{n}, \sigma'_2\right).\]
Define a monomorphism $\phi\colon S_n \rightarrow \IET$ by
$\phi( \delta)= T\left(\frac{1}{n}, \frac{2}{n},\ldots\frac{n}{n}, \delta\right)$. Thus $h'_i=\phi\left(\sigma'_i\right)$. We have \[\left\|\phi\left(\sigma'_i\right)h_i^{-1}\right\|_\mu<\max\left(\frac{k}{n},\frac{l}{n}\right) < \varepsilon.\] It is easy to check that  $\phi$ is an isometry.
\end{proof}

\section{Metrically uniformly simple groups} \label{sec:met}


Our examples of simple metric ultraproducts from sections \ref{sec:sym}, \ref{sec:iet} do satisfy another property, which we call \emph{metric uniformly simplicity} in Definition \ref{msf} below. 
%
%
%
Recall from Definition \ref{def:bsim} that $C_N(g,G) := \left(g^{G} \cup g^{-1 G}\right)^{\leq N}$.  

\begin{definition} \label{msf} \mbox{}
\begin{enumerate}
    \item A group with pseudo-metric $(G, \|\cdot\|)$ is called 
\emph{metrically uniformly simple} if for all $t>r>0$, there is $N \in \N$ such that $C_{N}(g,G)\supseteq \B_t(e)$ holds for all $g \in G$ with $\|g\| \in (r,t]$.
    \item A family of groups with pseudo-metrics  $\G=(G_n, \|\cdot\|_n)_{n\in \N}$ is called \emph{metrically uniformly simple} if for all $t>r>0$, there is $N \in \N$ such that $C_{N}(g,G_i)\supseteq \B_t(e)$ holds for all $n\in \N$ and for any $g \in G_n$ with $\|g\|  \in (r,t]$.
\end{enumerate}
\end{definition}  

We conjecture the following.

\begin{conjecture} \label{con:gw}
If a metric ultraproduct $(\G^*_{\text{met,fin}},\|\cdot\|)$ of a family of pseudo-metric group $\G$ is simple, then $\G^*_{\text{met,fin}}$ must be metrically uniformly simple (Definition \ref{msf}(1)). 
\end{conjecture}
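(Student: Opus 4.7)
My plan is to combine the $\varepsilon$-refined covering furnished by Theorem~\ref{thm:SimpFam}(3) with Baire category on the complete metric group $(\G^*_{\text{met,fin}},\|\cdot\|)$, and then to close the residual $\varepsilon$-gap by passing to a further metric ultrapower. Fix $t>r>0$. By Fact~\ref{fact:ass} together with Lemma~\ref{lem:inter}, each $C_n(g,\G^*_{\text{met,fin}})$ is a closed subset of $\G^*_{\text{met,fin}}$; since simplicity gives $\G^*_{\text{met,fin}}=\bigcup_{n\geq 1}C_n(g)$ for every $g\neq e$, Baire category (via completeness, Lemma~\ref{lem:propp}(2)) yields $n_g\in\N$, $\delta_g>0$ and $h_g\in C_{n_g}(g)$ with $\B_{\delta_g}(h_g)\subseteq C_{n_g}(g)$. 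Using that $C_n(g)$ is closed under inverses and $C_n(g)\cdot C_n(g)\subseteq C_{2n}(g)$, this gives $\B_{\delta_g}(e)\subseteq C_{2n_g}(g)$, which is the single-$g$ version of what we want.

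To uniformize in $g$, I would apply Theorem~\ref{thm:SimpFam}(3) with the constant sequence $\varepsilon_n\equiv\varepsilon$, obtaining for each $\varepsilon>0$ a single $N(\varepsilon)\in\N$ such that
\[
\B_t(e)\subseteq C_{N(\varepsilon)}(\bar g,\G^*_{\text{met}})\cdot\B_\varepsilon(e)\qquad\text{for every }\bar g\text{ with }\|\bar g\|>r.
\]
If $N_0:=\sup_{\varepsilon>0}N(\varepsilon)<\infty$, then for any $\bar g$ with $\|\bar g\|\in(r,t]$ and any $h\in\B_t(e)$ one can pick $x_\varepsilon\in C_{N_0}(\bar g)$ with $x_\varepsilon\to h$ as $\varepsilon\to 0$, and closedness of $C_{N_0}(\bar g)$ places $h\in C_{N_0}(\bar g)$; this is precisely metric uniform simplicity. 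So the whole problem collapses to showing that $N(\varepsilon)$ stays bounded as $\varepsilon\to 0$.

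To attempt this, suppose for contradiction that $N(\varepsilon_k)\to\infty$ along some $\varepsilon_k\to 0$. Minimality of $N(\varepsilon_k)$ furnishes $g_k\in\G^*_{\text{met,fin}}$ with $\|g_k\|\in(r,t]$ and $h_k\in\B_t(e)$ such that $h_k\notin C_{N(\varepsilon_k)-1}(g_k)\B_{\varepsilon_k}(e)$. Read inside a further metric ultrapower $\mathcal{H}=(\G^*_{\text{met,fin}})^*_{\text{met,fin}}$, the sequences $(g_k)_k$ and $(h_k)_k$ assemble into $\bar g,\bar h\in\mathcal{H}$ with $\|\bar g\|\in[r,t]$, $\|\bar h\|\leq t$, and $\bar h\notin C_N(\bar g,\mathcal{H})$ for every $N$, contradicting simplicity of $\mathcal{H}$.

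The main obstacle lies in that last step: one needs the simplicity of $\G^*_{\text{met,fin}}$ to propagate to every further metric ultrapower $\mathcal{H}$ of it. Since simplicity of metric ultraproducts is sensitive to the chosen ultrafilter and, via Theorem~\ref{thm:SimpFam}, requires coordinate-level uniformity of the coverings, this ascent is not automatic and is precisely the point at which the conjecture ceases to be a formal corollary of the compactness machinery developed in Sections~\ref{sec:compTh}--\ref{sec:sim}. It is this same difficulty that forces the authors to work either under the $(\star)$-property in Theorem~\ref{2Us->mus} or under the additional transfer hypothesis of Theorem~\ref{Us->mus}, each of which supplies the missing coordinate-uniformity needed to run the diagonalization above unconditionally; a proof in full generality would seemingly require either such a transfer result for iterated ultrapowers, or an internal argument inside $\G^*_{\text{met,fin}}$ that bounds $N(\varepsilon)$ directly from Baire-type considerations.
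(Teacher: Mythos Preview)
The statement you attempt is a \emph{Conjecture} in the paper; there is no proof of it there in full generality, and your write-up does not supply one either---as you yourself make explicit. You correctly isolate the obstruction: the diagonalization into a second metric ultrapower requires that simplicity of $\G^*_{\text{met,fin}}$ ascend to $(\G^*_{\text{met,fin}})^*_{\text{met,fin}}$, and this ascent is not automatic. That is exactly the gap; your diagnosis matches the paper's, and your pointer to Theorems~\ref{Us->mus} and~\ref{2Us->mus} as the places where an extra hypothesis closes it is accurate.

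Your route to the gap is clean and is essentially the paper's own strategy in Theorem~\ref{2Us->mus}: apply Theorem~\ref{thm:SimpFam}(3) with constant $\varepsilon$ to get a uniform $N(\varepsilon)$, reduce metric uniform simplicity to boundedness of $N(\varepsilon)$ as $\varepsilon\to 0$, and diagonalize a putative counterexample into a second ultrapower. Under $(\star)$ the paper closes the loop not by proving simplicity of the second ultrapower directly, but via the auxiliary normal subgroup $Z_\U$ of Fact~\ref{ll} and the comparison $\E\leq Z_\U$ of Fact~\ref{E<Z}; without $(\star)$ no such comparison subgroup is available, and the conjecture remains open.

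Two minor notes. The opening Baire-category paragraph establishing $\B_{\delta_g}(e)\subseteq C_{2n_g}(g)$ for a fixed $g$ is correct but superfluous: Corollary~\ref{cor:bbsim} already yields $C_{N_g}(g)=\G^*_{\text{met,fin}}$ directly from simplicity in the bounded case. And your appeals to Fact~\ref{fact:ass} and Theorem~\ref{thm:SimpFam} tacitly require Assumption~\ref{eq:ass}, which is not among the conjecture's stated hypotheses; this is automatic when the norms are uniformly bounded but is worth flagging.
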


\begin{remark} \label{MetPrdMUS}
Observe that metric uniformly simplicity is preserved under taking metric ultraproducts, that is metric ultraproduct of metrically uniformly simple family of groups is metrically uniformly simple.
\end{remark}

\begin{lemma} 
Let $\G=(G_n, \|\cdot\|_n)_{n \in \N}$ be a family of metric groups. A metric ultraproduct $G^*_{\text{met, fin}}$ is metrically uniformly simple if and only if for any $t>r>0$ there is $N\in\N$ such that for any $\varepsilon >0$ the following holds  \[\text{for }\U\text{-almost all }n\in\N\ \   C_N(g,G_n)\cap \B_t(e)\text{ is }\varepsilon\text{-dense in }\B_t(e)\text{ in }G_n.\]
\end{lemma}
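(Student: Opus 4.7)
The proof is a dictionary between metric uniform simplicity of the ultraproduct and $\varepsilon$-density on the coordinates, following the pattern of the paper's compactness theorems. Throughout I will tacitly assume Assumption~\ref{eq:ass}, since the right-hand side of the equivalence refers to $C_N(\bar g, G^*_{\text{met, fin}})$ and Fact~\ref{fact:ass} is needed to identify this with $C_N(\bar g, G^*_{\text{met}}) \cap G^*_{\text{met, fin}}$ and to guarantee that it is closed in the topology induced by $\|\cdot\|$.

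For the $(\Rightarrow)$ direction, fix $t > r > 0$ and pick auxiliary radii $r' < r$ and $t' > t$; let $N$ be the constant supplied by metric uniform simplicity of $G^*_{\text{met, fin}}$ for the pair $(r', t')$. Suppose the density condition fails for this $N$: then there exist $\varepsilon > 0$ and a $\U$-large set of indices $n$ carrying witnesses $g_n \in G_n$ with $\|g_n\|_n \in (r, t]$ and $h_n \in \B_t(e)$ such that no element of $C_N(g_n, G_n) \cap \B_t(e)$ lies within $\varepsilon$ of $h_n$. The ultraproducts $\bar g, \bar h$ satisfy $\|\bar g\| \in [r, t] \subseteq (r', t']$ and $\|\bar h\| \leq t < t'$, so by metric uniform simplicity $\bar h \in C_N(\bar g, G^*_{\text{met, fin}})$, i.e. $\bar h$ is a product of at most $N$ conjugates of $\bar g^{\pm 1}$ by elements of $G^*_{\text{met, fin}}$. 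Unpacking this coordinatewise yields $x_n \in C_N(g_n, G_n)$ with $\lim_{n \to \U} \|x_n h_n^{-1}\|_n = 0$ and $\lim_{n \to \U} \|x_n\|_n = \|\bar h\| < t$; for $\U$-almost all $n$ we therefore obtain both $x_n \in \B_t(e)$ and $\|h_n x_n^{-1}\|_n < \varepsilon$, contradicting the choice of $h_n$.

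For the $(\Leftarrow)$ direction, fix $t > r > 0$ and pick $r' < r$, $t' > t$, letting $N$ be the constant provided by the density hypothesis applied to the pair $(r', t')$. Take $\bar g \in G^*_{\text{met, fin}}$ with $\|\bar g\| \in (r, t]$ and $\bar h \in \B_t(e)$; for $\U$-almost all $n$ one has $\|g_n\|_n \in (r', t']$ and $\|h_n\|_n < t'$. For each $k \geq 1$, applying density with $\varepsilon = 1/k$ produces, on a $\U$-large set of $n$, elements $x_{n,k} \in C_N(g_n, G_n) \cap \B_{t'}(e)$ with $\|h_n x_{n,k}^{-1}\|_n < 1/k$. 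The ultraproduct $\bar x_k := (x_{n,k})_{n \in \N}/\E$ then lies in the metrically internal set $\prod_n C_N(g_n, G_n)/\E$, has $\|\bar x_k\| \leq t' < \infty$, and satisfies $\|\bar h \bar x_k^{-1}\| \leq 1/k$. Hence $\bar h = \lim_k \bar x_k$ is in the $\|\cdot\|$-closure of $C_N(\bar g, G^*_{\text{met}}) \cap G^*_{\text{met, fin}} = C_N(\bar g, G^*_{\text{met, fin}})$ (Fact~\ref{fact:ass}); since this set is closed in $G^*_{\text{met, fin}}$, the element $\bar h$ belongs to it, as required.

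The principal subtlety is the boundary behaviour of the norm on $\bar g$: since $\|g_n\|_n \in (r, t]$ forces only $\|\bar g\| \in [r, t]$ (the endpoint $r$ being attainable in the ultralimit), one cannot apply metric uniform simplicity or the density hypothesis with the literal pair $(r, t)$ and must slightly enlarge to $(r', t')$ in both directions of the equivalence. Secondary technical points are ensuring that the coordinates $x_n$ produced from a word representation actually sit in $\B_t(e)$, which is forced by $\lim_{n \to \U} \|x_n\|_n = \|\bar h\| < t$, and that in the backward direction the approximants $\bar x_k$ are genuinely conjugacy products in $G^*_{\text{met, fin}}$ rather than merely in $G^*_{\text{met}}$; this is exactly what Fact~\ref{fact:ass} grants under Assumption~\ref{eq:ass}.
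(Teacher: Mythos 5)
Your overall strategy is the same as the paper's: transfer witnesses between the coordinates and the ultraproduct, and use the closedness of metrically internal sets (Lemma \ref{lem:inter} together with Fact \ref{fact:ass}) to upgrade $\varepsilon$-approximants to actual membership in $C_N\bigl(\bar g,\G^*_{\text{met, fin}}\bigr)$. Your backward direction is correct, and in fact a bit more careful than the paper's own write-up: the enlargement to the pair $(r',t')$ correctly handles the case $\|\bar g\|=t$, where the coordinates $g_n$ may have norm slightly above $t$, and you make the appeal to Assumption \ref{eq:ass} explicit where the paper leaves it implicit.

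The forward direction, however, contains a step that fails. From $h_n\in\B_t(e)$ you only get $\|\bar h\|=\lim_{n\to\U}\|h_n\|_n\le t$, not $\|\bar h\|<t$: the endpoint $t$ is attainable in the ultralimit (take $\|h_n\|_n=t-\tfrac1n$), exactly the phenomenon you yourself point out for $\bar g$ at the endpoint $r$. Consequently $\lim_{n\to\U}\|x_n\|_n=\|\bar h\|$ may equal $t$, and then nothing forces the coordinates $x_n$ of the word representation into the \emph{open} ball $\B_t(e)$ of $G_n$; they may all have norm slightly $\ge t$. In that case $\|h_nx_n^{-1}\|_n<\varepsilon$ does not contradict your choice of witnesses, because the failure of density only says that no element of $C_N(g_n,G_n)\cap\B_t(e)$ is $\varepsilon$-close to $h_n$, while elements of $C_N(g_n,G_n)$ lying just outside $\B_t(e)$ are unconstrained. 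So the argument as written breaks exactly when $\lim_{n\to\U}\|h_n\|_n=t$, and you would need either to rule this case out or to produce approximants of norm strictly below $t$ (neither is automatic in a general metric group). For comparison, the paper's proof of this implication silently elides the mirror image of the same point: from the failure of $\varepsilon_N$-density of $C_N(g_{N,n},G_n)\cap\B_t(e)$ it asserts that $h_{N,n}$ is $\varepsilon_N$-far from \emph{all} of $C_N(g_{N,n},G_n)$, which again ignores elements of $C_N(g_{N,n},G_n)$ just outside the ball. Both arguments become complete if the density condition is read with $C_N(g,G_n)$ itself (or with the intersection taken in a slightly larger ball) rather than with $C_N(g,G_n)\cap\B_t(e)$; as stated, though, your explicit claim $\|\bar h\|<t$ is false and the contradiction does not go through in the boundary case.
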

\begin{proof} $(\Rightarrow)$ Suppose $G^*_{\text{met, fin}}$ is metrically uniformly simple. Let $e \neq\bar{g} \in G^*_{\text{met, fin}}$
and let $(g_n)_{n \in \N} \in \prod_{n\in\N} G_n$ be such that $\bar{g} = (g_n)_{n \in \N}\E$. Fix $0< r< \| \bar{g} \| \leq t$ and let $N$ be as in Definition \ref{msf}. We have that  \[\left\{n\in\N : C_N(g_n,G_n)\cap \B_t(e)\text{ is $\varepsilon$-dense in }\B_t(e)\text{ in }G_n\right\} \in \U,\]
for any $\varepsilon >0$.
So we conclude that $C_{N}(\bar{g},G^*_{\text{met, fin}}) \supseteq  \B_t(e)$ in $G^*_{\text{met, fin}}$.

$(\Leftarrow)$ For contradiction, suppose that there are $t>r>0$ such that for every natural number $N$ there is $\varepsilon_N >0$ such that $U_N\in\U$, where $U_N=$
\[\left\{n\in\N : (\exists g_{N,n} \in G_n)\|g_{N,n}\|_n \in (r,t] \land C_N(g_{N,n},G_n)\cap \B_t(e)\text{ is not $\varepsilon_N$-dense in }\B_t(e) \right\}.\] 
For $N \in \N$ and $n \in U_N$ let $h_{N,n} \in \B_t(e)$ such that
\[ \inf \left\{ \|h_{N,n}g^{-1} \|: g \in  C_N(g_{N,n},G_n)  \right\} > \varepsilon_N. \]
This gives in ultraproduct $\G^*_{\text{met, fin}}$ elements
\[\overline{g_N}= (g_{N,1},g_{N,2}, \ldots)\E\text{ and } \overline{h_N}= (h_{N,1},h_{N,2}, \ldots)\E,\] such that $\|\overline{g_N} \| \in (r,t]$, $\|\overline{h_N} \| \leq t$ and 
\[ \overline{h_N} \not\in C_N\left(\overline{g_N},G^*_{\text{met, fin}}\right),\] for any $N\in\N$. Hence, $G^*_{\text{met, fin}}$ can not be metrically uniformly simple.
\end{proof}


From now we assume that all norms are bounded, that is $\|\cdot\|\leq 1$.

%
We will show that Conjecture \ref{con:gw} is true under some natural assumption. First, in Theorem \ref{Us->mus} we prove that any simple metric ultraproduct of a family with $(\star)$-property is metrically uniformly simple. Next, in Theorem \ref{2Us->mus} we show that Conjecture is true if underlying metric ultraproduct has $(\star)$-property.


Let us introduce some notion.

\begin{definition}
For a group $G$ and $g \in G$ denote 
$N(g,G)= \min \{n \in \N: C_n(g,G) = G\}$.
\end{definition}

Let us introduce $(\star)$-property, which is crucial in arguments below.

\begin{definition}\label{def:star}
We say that a family of groups $\G$ has $(\star)$-property if (1) and (2) are true:
\begin{enumerate}
\item
there is a natural number $N$ such that for any $G \in  \G$ there is $g \in G$  such that $C_{N}(g,G)=G$;
%
%
\item
for any $k\in\N$ there is $l\in\N$ such that for any group $G \in \G$ and elements $g,h \in G$ if $N(g,G), N(h,G)\geq l$, then $N(gh,G)\geq k$.
\end{enumerate}
\end{definition}

Now we prove that every family of metric groups which is metrically uniformly simple has $(\star)$-property.


\begin{fact} \label{**prop}
If $\G$ is a family of metric groups which is 
metrically uniformly simple (see Definition \ref{msf}), then $\G$ has $(\star)$-property.
\end{fact}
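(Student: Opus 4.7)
Plan: The main tool is the function
\[ f(r) := \sup\{N(g,G) : G\in\G,\ g\in G,\ \|g\|>r\},\]
which is finite for every $r>0$ by Definition \ref{msf}(2) applied with $t=2$ (since $\|\cdot\|\le 1$, $\B_2(e)=G$, so the condition $C_N(g,G)\supseteq\B_2(e)$ becomes $C_N(g,G)=G$, i.e.\ $N(g,G)\le N$); the function $f$ is non-increasing. Two dual estimates will drive the proof. First, $N(g,G)\le f(\|g\|/2)$ for every $g\neq e$, simply by applying the definition of $f$ with $r=\|g\|/2$. Second, $N(g,G)\ge D_G/\|g\|$, where $D_G:=\sup_{x\in G}\|x\|$: each product of $n$ conjugates of $g$ or $g^{-1}$ has norm at most $n\|g\|$, so $C_n(g,G)\subseteq\B_{\le n\|g\|}(e)$, and $C_n(g,G)=G$ forces $n\|g\|\ge D_G$.

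For part (1) of the $(\star)$-property I take $N_0:=f(1/2)$. For every $G\in\G$ with an element $g$ of norm exceeding $1/2$ (equivalently, $D_G>1/2$), the first estimate gives $N(g,G)\le N_0$, so $C_{N_0}(g,G)=G$. If the family contains groups of smaller diameter, one repeats the argument with a correspondingly smaller threshold $r_0$, relying on the standing non-degeneracy of Section \ref{sec:met} (a uniform positive lower bound on $D_G$) to make the resulting bound uniform.

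For part (2), given $k\in\N$, I shall choose $l$ large enough that the contrapositive of the first estimate forces $\|g\|,\|h\|$ to be very small. Precisely, $N(g,G)\ge l$ implies $f(\|g\|/2)\ge l$, hence $\|g\|\le 2 f^{-1}(l)$ where $f^{-1}(l):=\sup\{r>0:f(r)\ge l\}$; if $f$ is unbounded this tends to $0$ as $l\to\infty$, and if $f$ is bounded then $N(g,G)\ge l$ is simply vacuous for large $l$. Assuming $N(g,G),N(h,G)\ge l$, the triangle inequality yields $\|gh\|\le 4 f^{-1}(l)$, and the second estimate applied to $gh$ gives $N(gh,G)\ge D_G/(4 f^{-1}(l))$. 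Choosing $l$ so that $4 f^{-1}(l)\le D_*/k$, where $D_*>0$ is a uniform lower bound on the diameters of groups in $\G$, one obtains $N(gh,G)\ge k$.

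The main obstacle is uniformity across the family: the two dual estimates combine naturally, but both parts of $(\star)$ require a positive uniform lower bound on $D_G$ over $G\in\G$. Granted such a bound (implicit in the bounded-norm setting of this section), both parts of the $(\star)$-property follow immediately from the interplay of the upper estimate $N(g,G)\le f(\|g\|/2)$ and the lower estimate $N(g,G)\ge D_G/\|g\|$.
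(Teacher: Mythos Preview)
Your approach is essentially the same as the paper's: both use metric uniform simplicity to conclude that $N(g,G)$ large forces $\|g\|$ small, apply the triangle inequality to bound $\|gh\|$, and then invoke the elementary lower bound $N(gh,G)\ge D_G/\|gh\|$. The paper's argument is more direct---given $k$, it takes $r=\tfrac{1}{2k}$ and obtains $l$ straight from Definition~\ref{msf}, bypassing your auxiliary function $f$---and it omits part~(1) of the $(\star)$-property entirely; you are also more explicit than the paper about the implicit assumption of a uniform positive lower bound on the diameters $D_G$ (the paper's step ``$\|gh\|<\tfrac{1}{k}$, and so $N(gh,G_i)>k$'' silently requires $D_{G_i}\ge 1$).
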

\begin{proof}
Let $\G=(G_n, \|\cdot\|_n)_{n\in \N}$ be a metrically uniformly simple family of groups.
For a number $k \in \N$ take $l \in \N$ such that for any $i \in \N$ and any $g \in G_i$ with $\|g\| > \frac{1}{2k}$ we have $C_l(g, G_i)=G_i$.

So if we take any $k \in \N$ and any $g,h \in G_i$
with $N(g, G_i), N(h, G_i) >l$ then $\|g\|, \|h\| <\frac{1}{2k}$ so $\|gh\| < \frac{1}{k}$, and so $N(gh, G_i)>k$.
\end{proof}

\begin{remark}
By theorem 1.1 \cite{liesha}, a family of finite simple groups has a $(\star)$-property. We see as consequence of Theorem \ref{PslC}, that $(\PSL_n(\CC))_{n\in\N}$ has  $(\star)$-property.
\end{remark}

Next notion allows us to define a set of infinitesimal sequences in products of any (even in non-necessarily metric) groups.

\begin{definition}\label{def:zz}
For a family of groups $\G=(G_n)_{n\in N}$ and a nonprincipal ultrafilter $\U$ on $N$ define 
\[ Z_{\U}= \left\{ (g_n)_{n\in \N}: \forall k \in \N \ \  \{n \in \N: N(g_n, G_n)>k \} \in \U \right\}.\]
\end{definition}


\begin{fact} \label{ll}
If a family $\G = (G_n)_{n\in \N}$ has a $(\star)$-property, then $Z_{\U}$ is a normal subgroup of $\prod_{n\in \N} G_n$, for any non-principal ultrafilter $\U$. Moreover $\frac{\prod_{n\in \N}G_n}{Z_{\U}}$ is a non-trivial boundedly simple group (see Definition \ref{def:bsim}).
\end{fact}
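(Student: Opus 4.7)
The proof decomposes naturally into three parts, which I would carry out in order: (i) $Z_\U$ is a normal subgroup of $\prod_{n\in\N}G_n$; (ii) the quotient is non-trivial; (iii) the quotient is boundedly simple.

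For (i), the quantity $N(\cdot,G)$ is invariant under inversion and under conjugation, because $g^G\cup(g^{-1})^G$ is unchanged by replacing $g$ with $g^{-1}$ or with $g^x$; hence $C_k(g^{-1},G)=C_k(g^x,G)=C_k(g,G)$ for all $k$. This immediately gives that $Z_\U$ is closed under inverses and normal in $\prod_n G_n$. Closure under products is exactly what $(\star)(2)$ supplies: given $(g_n),(h_n)\in Z_\U$ and a target $k\in\N$, pick $l$ as in Definition \ref{def:star}(2); the sets $\{n:N(g_n,G_n)>l\}$ and $\{n:N(h_n,G_n)>l\}$ both lie in $\U$, so does their intersection, and on this intersection $N(g_n h_n,G_n)\geq k$, whence $(g_n h_n)\in Z_\U$.

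For (ii), $(\star)(1)$ yields $N\in\N$ and, for each non-trivial $G_n$, an element $g_n\in G_n$ with $N(g_n,G_n)\leq N$ (automatically $g_n\neq e$ since $C_N(e,G_n)=\{e\}\neq G_n$). The sequence $(g_n)$ satisfies $\{n:N(g_n,G_n)>N\}=\emptyset\notin\U$, so $(g_n)\notin Z_\U$, and its class witnesses non-triviality of the quotient.

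The bulk of the work is (iii). Fix a non-identity class $\bar g=(g_n)/Z_\U$; unfolding the definition of $Z_\U$ gives some $k\in\N$ with $A:=\{n:N(g_n,G_n)\leq k\}\in\U$. I claim $C_k(\bar g)$ is the entire quotient, which gives bounded simplicity with bound $M=k$ for $\bar g$. Given any $(h_n)\in\prod_n G_n$, define $h'_n=h_n$ for $n\in A$ and $h'_n=e$ otherwise; then $h_n(h'_n)^{-1}=e$ on $A\in\U$, so $(h_n)$ and $(h'_n)$ represent the same class modulo $Z_\U$. For $n\in A$, since $C_k(g_n,G_n)=G_n$ we can write $h'_n=\prod_{i=1}^{j_n}g_n^{\epsilon_{i,n}y_{i,n}}$ with $j_n\leq k$ and signs $\epsilon_{i,n}\in\{\pm1\}$. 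The pattern $(j_n,\epsilon_{1,n},\ldots,\epsilon_{j_n,n})$ takes values in a finite set, so some fixed pattern $(k^*,\epsilon_1^*,\ldots,\epsilon_{k^*}^*)$ is realised on a subset $B\subseteq A$ belonging to $\U$. Choose $y_{i,n}\in G_n$ witnessing this decomposition for $n\in B$ and set $y_{i,n}=e$ off $B$; then $P_n:=\prod_{i=1}^{k^*}g_n^{\epsilon_i^* y_{i,n}}$ agrees with $h'_n$ on $B$, so $(h_n)\equiv(P_n)\pmod{Z_\U}$, and in the quotient $\bar h=\prod_{i=1}^{k^*}\bar g^{\epsilon_i^*\bar y_i}\in C_{k^*}(\bar g)\subseteq C_k(\bar g)$.

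The main obstacle is the uniformization step in (iii): the pointwise bound $N(g_n,G_n)\leq k$ only yields expressions of $h_n$ as products of $\leq k$ conjugates of $g_n^{\pm1}$ with an $n$-dependent length and sign pattern, and one must exploit the finiteness of the set of possible patterns together with the ultrafilter to collapse them into a single fixed-shape product expression in the quotient. The remainder is routine unpacking of the definitions of $N(\cdot,G)$, $C_k(\cdot,G)$ and $Z_\U$.
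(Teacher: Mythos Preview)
Your proof is correct and follows the same approach as the paper's: normality via conjugation-invariance of $N(\cdot,G)$ plus $(\star)(2)$ for products, non-triviality via $(\star)(1)$, and bounded simplicity by picking $k$ with $\{n:N(g_n,G_n)\le k\}\in\U$ and showing $C_k(\bar g)$ is everything. In fact you are more careful than the paper in part (iii): the paper simply asserts $C_k\bigl((g_n)Z_\U\bigr)=\frac{\prod_n C_k(g_n,G_n)}{Z_\U}$ without comment, whereas you correctly identify and carry out the finite pigeonhole on sign patterns needed to turn coordinate-wise membership in $C_k(g_n,G_n)$ into a single fixed-shape product of conjugates in the quotient.
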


\begin{proof}
Obviously $Z_{\U}$ is a normal subset of  $\prod_{n\in \N} G_n$, that is $Z_{\U}$ is closed under conjugation. It is enough to prove that $Z_{\U}$ is closed under multiplication. Let $g=(g_n)_{n\in N},  h=(h_n)_{n\in \N} \in Z_{\U}$.
Fix $k \in \N$.
Observe that \[\{n \in \N: N(g_n \cdot h_n, G_n)>k \} \in \U,\] 
so $(g_n \cdot h_n)_{n\in \N} \in Z_{\U}$.
Take a natural number $l$ as in definition of $(\star)$-property.
Put \[U_g=\{n\in  \N:  N(g_n, G_n)>l  \}\text{ and } U_h=\{n\in  \N:   N(h_n, G_n)> l  \}.\]
Let $U= U_g \cap U_h \in \U$. Then $N(g_n, G_n), N(h_n, G_n)\geq l$, for any $n \in U$. Thus we have that $N(g_n \cdot h_n, G_n) \geq k$, hence $(g_n)_{n\in N}\cdot (h_n)_{n\in \N} \in Z_{\U}$.

Let us prove the moreover part. By $(1)$ of $(\star)$-property  $ Z_{\U} \neq \prod_{n\in \N} G_n$.
Let  $ (g_n)_{n\in \N} \notin Z_{\U} $.
There is a natural number $k$ such that  
$\{n \in \N: N(g_n, G_n) \leq k \} \in \U$, so
\[\{n \in \N: C_k(g_n, G_n)=G_n \} \in \U.\] 
Finally, 
\[C_k\left((g_n)_{n\in \N} Z_{\U}, \frac{\prod_{n\in \N}G_n}{Z_{\U}}\right)=
 \frac{\prod_{n\in \N} C_k(g_n, G_n)}{Z_{\U}}= \prod_{n\in \N}\frac{G_n}{Z_{\U}},\] so $\frac{\prod_{n\in \N}G_n}{Z_{\U}}$ is boundedly simple.
\end{proof}
%


\begin{fact} \label{E<Z}
Let $\G=(G_n, \|\cdot\|_n)_{n \in \N}$ be a family of pseudo-metric groups having $(\star)$-property. Then $\E \leq Z_{\U}$. 
Moreover if  $G^*_{\text{met}}$ is simple then  $\E= Z_{\U}$ (see Definition (\ref{inf}) for $\E$). 
\end{fact}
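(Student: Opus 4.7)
The plan is to prove the two inclusions of the moreover equality separately: the first inclusion $\E\subseteq Z_\U$ follows from a diameter estimate based on bi-invariance, and the reverse inclusion is extracted from simplicity of $\G^*_{\text{met}}$ combined with the non-triviality of $\prod_n G_n/Z_\U$ established in Fact \ref{ll}.

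For $\E\subseteq Z_\U$, I would argue by contradiction. Suppose $(g_n)\in\E$ but $(g_n)\notin Z_\U$. Then $\lim_{n\to\U}\|g_n\|_n=0$, while there exists $k\in\N$ with $V:=\{n : N(g_n,G_n)\le k\}\in\U$. For every $n\in V$ the equality $C_k(g_n,G_n)=G_n$ together with bi-invariance of $\|\cdot\|_n$ and the triangle inequality gives $\|h\|_n\le k\|g_n\|_n$ for every $h\in G_n$; hence $\diam(G_n)\le k\|g_n\|_n$ on $V$, so $\diam(G_n)\to 0$ along $\U$. But then every sequence $(h_n)\in\prod_n G_n$ is an infinitesimal, so $\E=\prod_n G_n$ and $\G^*_{\text{met}}$ is trivial. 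In the non-degenerate setting (and in particular in the moreover clause, where simplicity of $\G^*_{\text{met}}$ rules out triviality) this yields the desired contradiction, using that by Fact \ref{ll} one has $Z_\U\subsetneq\prod_n G_n$: the witness $(f_n)$ from $(\star)(1)$ with $N(f_n,G_n)\le N$ lies outside $Z_\U$, and if $\E=\prod_n G_n$ it would lie in $\E\setminus Z_\U$, which contradicts $\E\subseteq Z_\U$.

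For the moreover clause, assume $\G^*_{\text{met}}$ is simple. Since $\E\subseteq Z_\U$ by the first part, the canonical surjection $q\colon\prod_n G_n\twoheadrightarrow\prod_n G_n/Z_\U$ factors uniquely through $p\colon\prod_n G_n\twoheadrightarrow \G^*_{\text{met}}$, producing a surjective homomorphism $\bar q\colon \G^*_{\text{met}}\twoheadrightarrow \prod_n G_n/Z_\U$ with kernel $Z_\U/\E$. This kernel is normal in $\G^*_{\text{met}}$, so by simplicity it is either trivial or all of $\G^*_{\text{met}}$; the latter possibility would collapse the target $\prod_n G_n/Z_\U$ to the trivial group, contradicting Fact \ref{ll}. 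Therefore $Z_\U/\E=\{e\}$, i.e.\ $Z_\U=\E$.

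The delicate point is the first inclusion: the diameter bound produces a contradiction precisely when $\G^*_{\text{met}}$ is non-trivial, equivalently when the diameters $\diam(G_n)$ do not vanish along $\U$. This is automatic in the moreover statement (simplicity forces non-triviality) and is the only place where one genuinely uses the bounded-case framework beyond ensuring $\G^*_{\text{met}}=\G^*_{\text{met, fin}}$; the moreover part itself is then formal once the first inclusion is available.
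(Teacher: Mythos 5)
Your proof is correct and follows essentially the same route as the paper's: the core of the inclusion $\E \leq Z_{\U}$ is the estimate $\|h\|_n \leq k\|g_n\|_n$ coming from $C_k(g_n,G_n)=G_n$ together with bi-invariance, and the moreover part is the observation that simplicity of $\G^*_{\text{met}}$ makes $\E$ a maximal normal subgroup containing no proper normal overgroup other than $Z_{\U}$ itself (the paper dismisses this as ``clear''). Note that the non-degeneracy you flag is also implicitly assumed in the paper's proof, which starts by choosing $\bar{h}$ with $\|\bar{h}\|=r>0$, so your explicit caveat matches the paper's tacit hypothesis rather than constituting a divergence.
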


\begin{proof}
Let $\bar{g}=(g_n)_{n \in \N} \in \E$ and fix a natural number  $k>0$.
Take $\bar{h}=(h_n)_{n \in \N} \in \Pi_{n \in \N} G_n $ with $\|\bar{h}\|=r>0$.
We have that 
\[ \left\{n \in \N: \|g_n\|< \frac{r}{k} \right\} \in \U \]
so
\[ \left\{n \in \N: N(g_n, G_n) = \min \{n \in \N: C_n(g_n,G_n) = G_n\} >k\right\} \in \U. \]
Finally,  $\bar{g} \in Z_{\U}$. Moreover part is clear.
\end{proof}

Any known to us simple metric ultraproduct is in fact a metric ultraproduct of family with $(\star)$-property. For example, $\S = \left(S_n, \frac{1}{n}\|\cdot\|_H\right)_{n\in \N}$ has no $(\star)$-property (as (2) fails), but its metric ultraproduct $\S^*_{\text{met}}$ equals to metric ultraproduct of $\A = \left(A_n, \frac{1}{n}\|\cdot\|_H\right)_{n\in \N}$ which has $(\star)$-property. 

We will prove below that any simple metric ultraproduct of $(\star)$-property family is metrically uniformly simple.

\begin{theorem} \label{Us->mus}
If a family of metric groups has a $(\star)$-property, then any its simple metric ultraproduct is uniformly metrically simple.
\end{theorem}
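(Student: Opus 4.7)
The plan is to convert metric uniform simplicity into a purely discrete statement using Fact~\ref{E<Z}, which under the hypotheses of the theorem yields the equality $\E = Z_\U$. The central ingredient is the following

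\emph{Key claim.} For every $r>0$ there is $N\in\N$ such that
\[\left\{n\in\N : \forall g\in G_n,\ \|g\|_n\geq r \Rightarrow N(g,G_n)\leq N\right\}\in\U.\]

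I would first show that the Key claim implies the theorem, in fact in a stronger form. Fix $t>r>0$ and let $N$ be as in the claim. Given $\bar g=(g_n)/\E$ with $\|\bar g\|>r$, we have $\|g_n\|_n\geq r$ on some $V\in\U$, so $C_N(g_n,G_n)=G_n$ on some $U\subseteq V$ with $U\in\U$. For arbitrary $\bar h=(h_n)/\E$ and each $n\in U$, write $h_n=g_n^{\epsilon_{n,1}a_{n,1}}\cdots g_n^{\epsilon_{n,k_n}a_{n,k_n}}$ with $k_n\leq N$ and $\epsilon_{n,i}\in\{\pm1\}$. Since the length and sign-pattern take only finitely many values, pigeon-holing yields a set in $\U$ on which $k_n=k$ and $(\epsilon_{n,i})=(\epsilon_i)$ are constant. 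In the bounded case, $\bar a_i:=(a_{n,i})/\E\in\G^*_{\text{met}}$, and then
\[\bar h=\bar g^{\epsilon_1\bar a_1}\cdots\bar g^{\epsilon_k\bar a_k}\in C_N\left(\bar g,\G^*_{\text{met}}\right).\]
Hence $C_N(\bar g,\G^*_{\text{met}})=\G^*_{\text{met}}\supseteq\B_t(e)$, which is exactly Definition~\ref{msf}(1) (and in fact stronger, as $N$ does not depend on $t$).

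To prove the Key claim I argue by contradiction. If it fails, there is $r>0$ such that for every $N\in\N$ the set
\[A_N=\left\{n\in\N : \exists g_n\in G_n,\ \|g_n\|_n\geq r\text{ and } N(g_n,G_n)>N\right\}\]
belongs to $\U$. After standard diagonalization (replacing $A_N$ by $A_1\cap\cdots\cap A_N\setminus\{1,\dots,N\}$) one may assume $A_1\supseteq A_2\supseteq\cdots$, $\bigcap_N A_N=\emptyset$, with each $A_N\in\U$. Choosing witnesses $g_n^{(N)}\in G_n$ for $n\in A_N$, set $g_n:=g_n^{(N)}$ for $n\in A_N\setminus A_{N+1}$ and $g_n:=e$ elsewhere. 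Then $\|g_n\|_n\geq r$ on $A_1\in\U$, so $\|(g_n)/\E\|\geq r>0$; yet $\{n : N(g_n,G_n)>k\}\supseteq A_{k+1}\in\U$ for every $k\in\N$, showing $(g_n)\in Z_\U$. By Fact~\ref{E<Z}, $Z_\U=\E$, so $\|(g_n)/\E\|=0$, a contradiction.

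\emph{Main obstacle.} Everything hinges on the Key claim, which is precisely the trade of a metric lower bound $\|g\|_n\geq r$ for the algebraic upper bound $N(g,G_n)\leq N$. This trade succeeds only because of the \emph{equality} $Z_\U=\E$ from Fact~\ref{E<Z}, not the trivial inclusion $\E\leq Z_\U$; this is where simplicity of $\G^*_{\text{met}}$ enters the argument. The boundedness hypothesis plays only the minor role of guaranteeing that the sequences of conjugators $(a_{n,i})_n$ determine genuine elements of $\G^*_{\text{met}}$; without it one would need Assumption~\ref{eq:ass} and work inside $\G^*_{\text{met,fin}}$.
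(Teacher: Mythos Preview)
Your proof is correct and follows essentially the same route as the paper's: both produce, via diagonalization over the coordinate groups, an element of $Z_\U\setminus\E$ and then invoke Fact~\ref{E<Z} (which uses simplicity of $\G^*_{\text{met}}$) to obtain the contradiction $Z_\U=\E$. The only difference is organizational: you first isolate and prove the coordinate-level Key claim and then transfer to the ultraproduct, whereas the paper starts from a failure in $\G^*_{\text{met}}$ and pushes down to coordinates; your version is a little more explicit (you spell out the pigeon-hole step on sign patterns that the paper leaves implicit) and yields the slightly stronger conclusion that $N$ depends only on $r$, not on $t$.
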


\begin{proof}
Let $\G=(G_i, \|\cdot\|_i)_{i \in \N}$ be a family of metric groups with $(\star)$-property such that $G^*_{\text{met}}=\Pi_{i\in \N} G_i/\E$ is simple.
Suppose that $\G^*_{\text{met}}$ is not uniformly metrically simple.
Then, there are a number $r>0$ and a sequence $(\bar{g}_n=(g_{n,1},g_{n,2},g_{n,k}, \ldots))_{n \in \N}$ such that $\|\bar{g}_n\|>r$ and $C_n(\bar{g}_n, \G^*_{\text{met}}) \neq \G^*_{\text{met}}$  for any $n \in \N$.
So, for any natural number $n$ there is a set $U_n \in \U$ such that $\|g_{n,i}\|>r$ and $C_n(g_{n,i}, G_i) \neq G_i$ for any $i \in U_n$. We can assume that
\[\{1,2,\ldots, m\}\cap U_n=\emptyset\]
for any $m<n$. For any number $i \in \N$ let $k_i$ be maximal number such that $i \in U_{k_i}$ and let $h_i=g_{k_i,i}$. Define, $\bar{h}=(h_i)_{i\in \N}$. We have that $\bar{h} \in Z_{\U}$, because for any number $n$ a set $\{i: C_n(h_i, G_i) \neq G_i \} \supseteq U_n$ is in ultrafilter.
A group  $\G^*_{\text{met}}$ is simple, so by Fact \ref{E<Z} we have, $\E=Z_{\U}$.
So, $\bar{h} \in \E$, which gives a contradiction with $\|h_i\|>r>0$ for any $i$.
\end{proof}


\begin{theorem} \label{2Us->mus}
If a metric ultraproduct $\G^*_{\text{met}}$ is simple and has a $(\star)$-property, then $\G^*_{\text{met}}$ is metrically uniformly simple (Definition \ref{msf}).
\end{theorem}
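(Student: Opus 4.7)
The plan is to mirror the proof of Theorem~\ref{Us->mus}, with the single group $H := \G^*_{\text{met}}$ playing the role the family $\G$ played there, and with the ultraproduct representation $H = \G^*_{\text{met}}$ providing the coordinate structure for diagonalization.

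First, I will establish the algebraic consequence of simplicity plus $(\star)$: the set $I_\infty := \{g \in H : N(g,H) = \infty\}$ is a normal subgroup of $H$. Indeed, $e \in I_\infty$, and it is closed under inversion and conjugation since $N$ is invariant under both; closure under multiplication is exactly what $(\star)(2)$ buys, because if $a,b \in I_\infty$ and $k \in \N$ is arbitrary, the $l$ provided by $(\star)(2)$ satisfies $N(a), N(b) \geq l$, hence $N(ab) \geq k$, so $N(ab) = \infty$. By $(\star)(1)$, some $g_0 \in H$ has $N(g_0) < \infty$, so $I_\infty \subsetneq H$; combined with $H$ being simple, this forces $I_\infty = \{e\}$.

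Next, assume for contradiction that $H$ is not metrically uniformly simple, and pick $t > r > 0$ and a sequence $(\bar g_n)_n \subseteq H$ with $\|\bar g_n\| \in (r,t]$ and $C_n(\bar g_n, H) \neq H$. Write $\bar g_n = (g_{n,i})_i/\E$. Since $C_n(\bar g_n, H) = \prod_i C_n(g_{n,i}, G_i)/\E$ is metrically internal, and hence closed by Lemma~\ref{lem:inter}, there is $\bar y_n \in H$ at positive distance $\delta_n$ from it; translating to coordinates, one finds $U_n \in \U$ such that for all $i \in U_n$, $\|g_{n,i}\| > r$ and $C_n(g_{n,i}, G_i)$ fails to be $(\delta_n/2)$-dense in $G_i$. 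Now apply the diagonalization of Theorem~\ref{Us->mus}: after arranging the $U_n$ to be decreasing with empty intersection, set $k_i := \max\{n : i \in U_n\}$, $h_i := g_{k_i, i}$, and $\bar h := (h_i)/\E \in H$. Then $\|\bar h\| \geq r > 0$, and for each fixed $m$ the inclusion $C_m(h_i, G_i) \subseteq C_{k_i}(h_i, G_i)$ for $i$ with $k_i \geq m$ propagates the non-density to yield $C_m(\bar h, H) \neq H$; thus $\bar h \in I_\infty \setminus \{e\}$, contradicting the conclusion $I_\infty = \{e\}$ from the previous paragraph.

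The main obstacle is the density-parameter control in the diagonalization: the witnesses $\delta_{k_i}$ vary with $i$, and one must secure a single positive $\eta_m > 0$ such that $\{i : C_m(h_i, G_i) \text{ is not } \eta_m\text{-dense in } G_i\} \in \U$ in order to conclude $C_m(\bar h, H) \neq H$. This can be handled by a careful initial choice of the $\bar y_n$ — for instance, using Baire's theorem on the complete metric group $H$ from Lemma~\ref{lem:propp}(2) to ensure the $\delta_n$ do not degenerate along a suitable subsequence — or more abstractly by invoking $\aleph_1$-saturation of $\G^*_{\text{met}}$ in continuous logic to realize directly the type $\{\|x\| \geq r\} \cup \{N(x,H) \geq n : n \in \N\}$, which is finitely satisfied by the $\bar g_n$ and whose realization lies in $I_\infty \setminus \{e\}$.
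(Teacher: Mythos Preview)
Your first paragraph is correct and gives a direct proof that $H := \G^*_{\text{met}}$ is boundedly simple (recovering Corollary~\ref{cor:bbsim} via $(\star)$). The second paragraph, however, does not close, and you identify the reason yourself: after diagonalizing $h_i := g_{k_i,i}$, the only witness you have against $C_m(h_i,G_i)$ being all of $G_i$ is the point $y_{k_i,i}$ at distance $>\delta_{k_i}/2$, and since $k_i \to \infty$ along $\U$ you have no control over $\lim_{\U} \delta_{k_i}$. Neither proposed patch works as stated: Baire gives no lower bound on the $\delta_n$ (the sets $\{g : C_n(g,H) \neq H\}$ are not obviously closed in $H$), and the conditions ``$N(x,H) \geq n$'', i.e.\ $\sup_y d(y, C_{n-1}(x,H)) > 0$, are \emph{open} rather than closed, so the partial type you write is not one that $\aleph_1$-saturation of a metric ultraproduct is obliged to realize.

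The paper's proof avoids this difficulty by passing to a \emph{second} metric ultrapower. Since simplicity of $\G^*_{\text{met}}$ is equivalent to condition~(3) of Theorem~\ref{thm:SimpFam}, which is a statement about $H$ alone, the constant family $(H,\|\cdot\|)_{n\in\N}$ satisfies it as well, and hence $H^*_{\text{met}}$ is simple. Now apply $(\star)$ not inside $H$ but to the product $H^{\N}$: the set $Z_{\U}$ of Definition~\ref{def:zz} is a normal subgroup by Fact~\ref{ll}, and the sequence $\bar g = (\bar g_1, \bar g_2, \ldots)$ lies in $Z_{\U}$ directly because $N(\bar g_n, H) > n$. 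Since $H^*_{\text{met}}$ is simple, $\E$ is maximal normal in $H^{\N}$, and $\E \leq Z_{\U}$ by Fact~\ref{E<Z}, so $Z_{\U} = \E$; hence $\bar g \in \E$, contradicting $\|\bar g_n\| > r$ for all $n$. The point is that membership in $Z_{\U}$ is defined algebraically via $N(\cdot,H)$ rather than metrically via density, so the $\delta$-bookkeeping that derails your diagonalization never arises.
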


\begin{proof}
Fix  $\G = (G_n,\|\cdot\|_n)_{n\in \N}$ is a family of metric groups and $\U$ an ultrafilter on $\N$. Let $g_1, g_2, g_3, \ldots \in \G^*_{\text{met}}$, such that 
$C_n(g_n, \G^*_{\text{met}}) \neq \G^*_{\text{met}}$.
Since $(\star)$-property holds in $\G^*_{\text{met}}$, we can define  $Z_{\U}$, a normal subgroup of $(\G^*_{\text{met}})^{\omega}$ and see that 
$\bar{g}=(g_1,g_2,\ldots) \in Z_{\U}$.
Since $\G^*_{\text{met}}$ is simple,  $(G^{*}_{\text{met}})^*_{\text{met}}=(\G^*_{\text{met}})^{\omega}/\E$ is also  simple.
Hence $\E$ is a maximal normal subgroup of $(\G^*_{\text{met}})^{\omega}$ and $Z_{\U} = \E$.
Since $\bar{g} \in \E$ then for any $r>0$ there is a natural number $i$ such that $\|g_i \| <r$.
So a group $\G^*_{\text{met}}$ must be uniformly metrically simple.
\end{proof}



\begin{corollary}\label{Us->bound}
Suppose for each $n\in\N$, a group $G_n$ is equipped with two norms $\|\cdot\|_n$ and $\|\cdot\|'_n$. Let $\G=(G_n, \|\cdot\|_n)_{n \in \N}$ and $\G'=(G_n, \|\cdot\|'_n)_{n \in \N}$  and let $\U$ be a non-principal ultrafilter on $\N$. If $\G$ is metrically uniformly simple (see Definition \ref{msf}) and $\G^{'*}_{\text{met}}$ is simple, then $\G^{'*}_{\text{met}}$ is   metrically uniformly simple.
\end{corollary}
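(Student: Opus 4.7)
The plan is to chain together Fact \ref{**prop} and Theorem \ref{Us->mus}, exploiting a simple but decisive observation: the $(\star)$-property of Definition \ref{def:star} is purely group-theoretic. It is formulated entirely in terms of the function $N(g,G) = \min\{n \in \N : C_n(g,G) = G\}$, which depends only on the abstract group $G$ and not on any bi-invariant norm carried by $G$. Hence whenever two families of metric groups $(G_n, \|\cdot\|_n)_{n\in\N}$ and $(G_n, \|\cdot\|'_n)_{n\in\N}$ share the same underlying groups $G_n$, one of them has $(\star)$-property if and only if the other does.

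First I would apply Fact \ref{**prop} to the hypothesis that $\G = (G_n, \|\cdot\|_n)_{n\in\N}$ is metrically uniformly simple; this yields that the family of abstract groups $(G_n)_{n\in\N}$ has $(\star)$-property. Since $\G' = (G_n, \|\cdot\|'_n)_{n\in\N}$ is built on exactly the same underlying groups, the same witnesses (the integer $N$ in (1) of Definition \ref{def:star} and the function $k \mapsto l$ in (2)) show that $\G'$ also has $(\star)$-property.

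Second, I would feed $\G'$ into Theorem \ref{Us->mus}. Its two hypotheses are now satisfied: $\G'$ is a family of metric groups with $(\star)$-property, and $\G^{'*}_{\text{met}}$ is simple by assumption. The theorem then delivers precisely the conclusion we want, namely that $\G^{'*}_{\text{met}}$ is metrically uniformly simple.

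There is no real obstacle; the corollary is a two-line composition once one notices the norm-independence of $(\star)$-property. This is the reason the statement holds in the clean transfer form it does: any norm that makes the ultraproduct simple and is placed on groups that already admit \emph{some} metrically uniformly simple structure must itself produce a metrically uniformly simple ultraproduct.
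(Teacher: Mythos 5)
Your proposal is correct and follows exactly the paper's argument: Fact \ref{**prop} gives $(\star)$-property from the metric uniform simplicity of $\G$, and then Theorem \ref{Us->mus} applied to $\G'$ yields the conclusion. Your explicit remark that $(\star)$-property depends only on the underlying abstract groups (and hence transfers from $\G$ to $\G'$) is precisely the step the paper's two-line proof leaves implicit, so you have merely spelled out the same route in more detail.
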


\begin{proof}
By Fact \ref{**prop} metrically uniformly simple family $\G=(G_n, \|\cdot\|_n)_{n \in \N}$ has $(\star)$-property. It is enough to use Theorem \ref{Us->mus}.
\end{proof}

Let us finish this section with some applications.

Our $(\star)$-property gives a king of rigidity, in the sense of the Corollary \ref{cor:lulu} below, which illustrated by Example \ref{sztyw}. 

\begin{corollary}\label{cor:lulu}
Fix a group $G$ with $(\star)$-property. If $G$ is not simple, then every metric ultrapower $G^*_{\text{met}}$ of $G$ is not simple (that is, for any bounded choice of metric on $G$).
\end{corollary}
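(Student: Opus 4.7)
The plan is to exhibit the quotient $Z_{\U}/\E$ as a nontrivial proper normal subgroup of $G^*_{\text{met}}$, working with the constant family $\G = (G, \|\cdot\|)_{n \in \N}$. All the necessary machinery has already been set up in Facts \ref{ll} and \ref{E<Z}.

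First, since $G$ has $(\star)$-property, Fact \ref{ll} gives that $Z_{\U}$ is a normal subgroup of $G^{\N}$, and Fact \ref{E<Z} gives the inclusion $\E \leq Z_{\U}$. Hence $Z_{\U}/\E$ is a well-defined normal subgroup of $G^*_{\text{met}} = G^{\N}/\E$, and it suffices to prove the strict inclusions $\E \subsetneq Z_{\U} \subsetneq G^{\N}$.

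For the inclusion $Z_{\U} \subsetneq G^{\N}$ I would use condition (1) of Definition \ref{def:star}: there exist $N \in \N$ and $g \in G$ with $C_N(g, G) = G$, so $N(g, G) \leq N$. Therefore $\{n \in \N : N(g, G) > N\} = \emptyset \notin \U$, which shows that the constant sequence $(g, g, g, \ldots)$ does not lie in $Z_{\U}$. For the inclusion $\E \subsetneq Z_{\U}$, I would use the failure of simplicity: pick a non-trivial proper normal subgroup $M \lhd G$ and an element $h \in M \setminus \{e\}$. Since $M$ is normal, all conjugates of $h$ and $h^{-1}$ lie in $M$, hence $C_k(h, G) \subseteq M \neq G$ for every $k \in \N$; equivalently $N(h, G) = \infty$. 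Thus $\{n \in \N : N(h, G) > k\} = \N \in \U$ for every $k$, so $(h, h, h, \ldots) \in Z_{\U}$. On the other hand $\|h\| > 0$ since $\|\cdot\|$ is a genuine bi-invariant norm on $G$, so $(h, h, h, \ldots) \notin \E$.

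Combining these two strict inclusions shows that $Z_{\U}/\E$ is a proper nontrivial normal subgroup of $G^*_{\text{met}}$, so $G^*_{\text{met}}$ is not simple. This works for any non-principal ultrafilter $\U$ and any bounded bi-invariant norm on $G$, which is exactly the statement of the corollary.

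There is essentially no serious obstacle here once one has Facts \ref{ll} and \ref{E<Z} in hand; the only point to check is that the $(\star)$-property condition (1) for the constant family really delivers a single element boundedly normally generating $G$, so that the sequence witnessing $Z_{\U} \neq G^{\N}$ is produced uniformly over $\U$. Everything else is a transparent translation of ``$h$ lies in a proper normal subgroup'' into ``$N(h, G) = \infty$''.
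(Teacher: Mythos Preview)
Your proof is correct and uses essentially the same ingredients as the paper's: both hinge on the constant sequence built from a nontrivial element $h$ that fails to normally generate $G$, showing it lies in $Z_{\U}\setminus\E$. The only organisational difference is that the paper argues by contradiction via the ``moreover'' clause of Fact~\ref{E<Z} (simplicity of $G^*_{\text{met}}$ forces $Z_{\U}=\E$), whereas you unpack that clause directly and exhibit $Z_{\U}/\E$ as a proper nontrivial normal subgroup; the content is identical.
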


\begin{proof}
Let $G$ be non-simple group with $(\star)$-property and for contradiction suppose that there is a norm $\|\cdot\|$ on $G$ such that some metric ultrapower $\G^*_{\text{met}}$ is simple.
Take non-identity element $g\in G$ such that we have $C_N(g,G)\neq G$ for any number $
N\in \N$. Observe that a sequence $\bar{g}=(g,g,\ldots)$ is an element of subgrop $Z_{U}$. 
By Fact \ref{E<Z} we have $\bar{g}\in Z_{\U}= \E$, so $\|g\|=0$ which contradicts with $g$ is not identity.
\end{proof}

\begin{example} \label{sztyw}
Consider $\A = \left(A_n, \frac{1}{n}\|\cdot\|_H\right)_{n\in \N}$ and a pseudo norm $\| (g_i)_{i \in \N} \|= \lim_{i \rightarrow \U} \|g_i\|_H$
on algebraic ultraproduct $\A^*=\prod_{i\in I}A_n/\U$ of $\A$. By Lemma \ref{lem:lulu}(2) $C_{N}(\bar{g},\A^*)=\A^*$ holds for any element $\bar{g} \in \A^*$ of positive norm, where $N=16+\frac{4}{\|\bar{g}\|}$. Since $\A^*$ has $(\star)$-property by Remark \ref{**prop}, by Corollary \ref{cor:lulu}, there is no bi-invariant norm on $\A^*$, such that
its metric ultrapower is simple. 
\end{example}
%
%

A similar result as in \cite{nikol} gives full description of set of maximal normal subgroups of product family with $(\star)$-property.

\begin{proposition}\label{lll}
Let $\G=(G_n)_{n\in \N}$ be a family with $(\star)$-property and
suppose that $H$ is a maximal normal subgroup of  $\prod_{n\in \N} G_n$.
Then there exists $\U$, an ultrafilter on $\N$ such that $H= Z_{\U}$.
\end{proposition}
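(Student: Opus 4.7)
My plan is to construct the ultrafilter $\U$ directly from the maximal normal subgroup $H$, then invoke a uniqueness-of-maximal-normal-subgroup lemma for groups satisfying $(\star)$-property. First, since $Q := \prod_{n\in\N} G_n/H$ is simple, the image in $Q$ of each factor $G_n$ (embedded as the normal subgroup of $\prod_{n\in\N} G_n$ consisting of elements supported at coordinate $n$) is either trivial or all of $Q$. A short element-chase shows that $G_n \cdot H = \prod_{n\in\N} G_n$ already forces $G_n \subseteq H$, so in fact $G_n \subseteq H$ for every $n$. Applying the same dichotomy to the larger normal subgroup $\prod_{n \in A} G_n$ for arbitrary $A \subseteq \N$ shows that each $A$ is either \emph{small} (meaning $\prod_{n \in A} G_n \subseteq H$) or \emph{big} (meaning $\prod_{n \in A} G_n \cdot H = \prod_{n\in\N} G_n$). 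Let $\mathcal{I}$ be the family of small sets; it is a proper ideal on $\N$. If both $A$ and $A^c$ were big, then the commuting normal subgroups $\prod_{n \in A} G_n$ and $\prod_{n \in A^c} G_n$ would both surject onto $Q$, and their commuting would force $Q$ to be abelian. Away from this degenerate situation, $\mathcal{I}$ is a prime ideal and $\U := \{A \subseteq \N : A^c \in \mathcal{I}\}$ is an ultrafilter.

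Second, I would verify $\E_d := \{(g_n) : \{n : g_n = e\} \in \U\} \subseteq H$: any $g \in \E_d$ has support contained in some $\U$-small set $A$, which by definition of $\U$ lies in $\mathcal{I}$, placing $g$ inside $\prod_{n \in A} G_n \subseteq H$. Consequently both $H$ and $Z_\U$ contain $\E_d$ and descend to proper normal subgroups of the algebraic ultraproduct $U := \prod_{n\in\N} G_n / \E_d$. By the correspondence theorem $H/\E_d$ is maximal normal in $U$, and Fact~\ref{ll} together with the correspondence gives that $Z_\U/\E_d$ is maximal normal in $U$ as well. A routine transfer across $\U$ shows that $U$ inherits $(\star)$-property: the witness from $(\star)(1)$ for $\G$ gives a witness in $U$, and $(\star)(2)$ passes through via the intersection of two $\U$-large sets.

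The crucial ingredient is then the general lemma that any group $G$ with $(\star)$-property possesses a \emph{unique} maximal proper normal subgroup, namely $M(G) := \{g \in G : N(g,G) = \infty\}$. Indeed, if $H' \lhd G$ is proper and some $h \in H'$ had $N(h,G) < \infty$, then the normal closure of $h$ would equal $G$ and lie inside $H'$, a contradiction; hence $H' \subseteq M(G)$. Conversely, $M(G)$ is a normal subgroup: conjugation-invariance and inversion-invariance follow from the definition of $N$, while closure under multiplication is exactly $(\star)(2)$ applied to pairs with $N = \infty$ (they have $N \geq l$ for every $l$, hence product has $N \geq k$ for every $k$), and $M(G)$ is proper by $(\star)(1)$. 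Applying this lemma to $U$ forces $H/\E_d = M(U) = Z_\U/\E_d$, whence $H = Z_\U$.

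The main obstacle is the degenerate case when $Q$ is abelian (necessarily isomorphic to $\Z/p$ for some prime $p$), in which $\mathcal{I}$ is not prime, $\U$ is only a filter, and an arbitrary extension to an ultrafilter may violate $\E_d \subseteq H$. In the natural examples --- finite simple groups, alternating groups, and classical linear groups --- each factor is perfect with uniformly bounded commutator width, so $\prod G_n$ itself is perfect and no abelian quotient arises; checking that $(\star)$-property already rules out the abelian case (or treating it by an \emph{ad hoc} argument based on the surjection $\prod G_n \to \Z/p$) is the only delicate technical point of the argument.
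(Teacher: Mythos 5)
Your second half is sound, and it is genuinely different from the paper's argument. Granting an ultrafilter $\U$ whose dual ideal consists of sets $A$ with $\prod_{n\in A}G_n\subseteq H$ (so that $\E_d\subseteq H$ and $\E_d\subseteq Z_\U$), your chain --- {\L}o{\'s} transfer of the $(\star)$-property of Definition \ref{def:star} to the algebraic ultraproduct $U=\prod_{n\in\N}G_n/\E_d$, the observation that a single group with $(\star)$-property has $M(G)=\{g: N(g,G)=\infty\}$ as its unique maximal proper normal subgroup, and the identification $M(U)=Z_\U/\E_d$ --- is correct. The paper never looks at supports: it builds a filter base directly from the elements of $H$, namely the sets $A(\bar h,k)=\{n\in\N: N(h_n,G_n)>k\}$ for $\bar h\in H$ and $k\in\N$, argues the finite intersection property (an empty finite intersection would make every coordinate boundedly normally generated by one of finitely many elements of $H$, forcing $H=\prod_n G_n$), extends this base to an ultrafilter, obtains $H\subseteq Z_\U$, and finishes using maximality of $H$ together with $Z_\U\neq\prod_n G_n$ from Fact \ref{ll}. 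So the two constructions of $\U$ are really different: the paper's ultrafilter records how slowly elements of $H$ normally generate the coordinates, while yours records which blocks $\prod_{n\in A}G_n$ lie in $H$.

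The gap is in your construction of $\U$, and it is genuine. First, the preliminary claim that $G_n\cdot H=\prod_m G_m$ forces $G_n\subseteq H$ is false: if some $G_{n_0}$ is itself a (boundedly) simple group, the kernel of the coordinate projection onto $G_{n_0}$ is a maximal normal subgroup not containing $G_{n_0}$; there the proposition is witnessed by the principal ultrafilter at $n_0$, which the statement allows. That slip is not fatal, but the abelian case you defer is, because the $(\star)$-property does not exclude it. Take $G_n=S_3$ for all $n$: condition (1) of Definition \ref{def:star} holds with $N=3$ and $g$ a transposition, and condition (2) holds with $l=4$ for every $k$, since the elements of $S_3$ with $N(g,S_3)\geq 4$ are exactly the elements of $A_3$ (a $3$-cycle normally generates only $A_3$), and $A_3$ is closed under multiplication. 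Yet $\prod_{n\in\N}S_3$ surjects onto $\Z/2\Z$ in many ways: compose the coordinatewise sign map with any $\F_2$-linear functional on $\prod_{\N}\F_2$. Choosing a functional that extends the finite-sum functional (and kills the constant vector $\bar 1$), its kernel $H$ is a maximal normal subgroup of index $2$ for which your ideal $\mathcal{I}$ is just $\{\emptyset\}$, because no nonempty block $\prod_{n\in A}S_3$ lies in $H$; so $\mathcal{I}$ is not prime and no ultrafilter can be extracted from supports. Hence neither ``$(\star)$ rules out the abelian case'' nor perfectness of the factors is available at the level of generality of Proposition \ref{lll}, and the existence of $\U$ --- the heart of the matter --- is exactly what remains unproved in your argument. (Incidentally, for this $H$ an element $\bar h\in H$ that is a transposition in every coordinate has $A(\bar h,4)=\emptyset$, so the abelian regime is also precisely where the paper's finite-intersection-property claim must be scrutinized; the difficulty you flagged is intrinsic, not an artifact of your route.)
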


\begin{proof}
Suppose that  $\prod_{n\in \N} G_n/H$ is a simple group.
For $\bar{h} \in H$ and $k \in \N$ let
\[ A(\bar{h},k)=\{ n \in \N: N(h_n, G_n)>k \}.\]
Let $\U_0=\{A(\bar{h},k): \bar{h} \in H, k \in \N \}$.
\begin{claim}
$\U_0$ has finite intersection property, that is, any finite subfamily of  $\U_0$ has a non-empty  intersection.
\end{claim}
If not, suppose that for some $\bar{h}_1, \bar{h}_2,\ldots, \bar{h}_t \in H$ and 
$k_1, k_2,\ldots, k_t \in \N$ we have 
$A(\bar{h_1},k_1) \cap A(\bar{h_2},k_2) \cap\ldots \cap A(\bar{h_t},k_t)=\emptyset$.

Let $k= \max \{k_1, k_2,\ldots, k_t \}$, we have that
$A(\bar{h_1},k) \cap A(\bar{h_2},k) \cap\ldots \cap A(\bar{h_t},k)=\emptyset$. Let $\bar{h_i}=(h_{i,j})_{j \in \N}$, for $i=1,2,\ldots,t$.

We see that for any $j \in \N$ there is $i=1,2,\ldots,t$ such that $N(h_{i,j}, G_i) \leq k$,
so $C_k(h_{i,j}, G_i)= G_i$.
Since conjugacy classes of elements  $\bar{h}_1, \bar{h}_2,\ldots, \bar{h}_t \in H$ generate  $\prod_{n\in \N} G_n$, we have a contradiction and claim is proved.

Let $\U$ be an ultrafilter on $\N$ extending $\U_0$.
We see that any element of $H$ belongs to $Z_{\U}$.
Since $H$ and  $Z_{\U}$ are maximal subgroups, we have $H=Z_{\U}$.
\end{proof}

\section{Metric ultrapowers of linear groups} \label{sec:lin}

We give more applications of our theorems, by using a model-theoretic argument and results of Liebeck-Shalev from \cite{liesha}. 

Let us recall an important bi-invariant pseudo-norm on linear groups from \cite{nikol, thoms}. By $F$ we always denote a field. Let $\F_q$ denote the finite field of order $q$.

The \emph{Jordan length} $\ll_J$ of $A\in \GL_n(F)$ is defined as:
\begin{equation}\label{jordan}
\ll_{J}(A)= \frac{1}{n} \cdot \min_{\lambda \in F^{*}} \rk(A- \lambda I_n),
\end{equation}
where $\rk(M)$ is the rank of a  matrix $M$ and $I_n$ is the $n\times n$ identity matrix. The Jordan length is a pseudo norm on $\GL_n(F)$ \cite[p, 79]{thoms}.
   

We use the following deep fact, which can be derived from \cite[Lemma 4.1]{liesha}, see also \cite[Lemma 6]{thmsch}.

\begin{lemma}{\cite[Theorem 1.1]{liesha}} \label{lem:cc}
 There is a constant $C\in\N_{>0}$ such that for any $n\in\N$ and any finite field $\F$ the following is true for $H=\SL_n(\F)$ and $N\in\N$:
 \begin{equation} \label{eq:ls}
 \text{ for any }A \in H\setminus Z(H)\text{, if }\ll_J(A)\cdot N \geq C\text{, then }C_N(A, H)=H.
 \end{equation}
\end{lemma}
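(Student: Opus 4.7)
The lemma is quoted directly from Liebeck--Shalev \cite{liesha}, so the honest plan is to cite their Theorem 1.1 and verify only that the statement in the form we use here really is a reformulation of theirs (the Jordan length $\ll_J(A) = \frac{1}{n}\min_\lambda \rk(A-\lambda I)$ is, up to an additive constant, the normalised codimension of the largest eigenspace, which is the quantity controlled in the Liebeck--Shalev bounds). If one wanted to reconstruct the proof, here is the strategy I would follow.

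The starting point is the Frobenius counting formula: for a conjugacy class $A = g_0^H$ in the finite group $H=\SL_n(\F)$ and for $g\in H$, the number of $N$-tuples $(a_1,\dots,a_N)\in A^N$ with $a_1\cdots a_N = g$ equals
\begin{equation*}
\frac{|A|^N}{|H|}\sum_{\chi\in\mathrm{Irr}(H)}\frac{\chi(g_0)^N\,\overline{\chi(g)}}{\chi(1)^{N-1}}.
\end{equation*}
The trivial character contributes $|A|^N/|H|>0$, and it suffices to show that the total contribution of the non-trivial characters has smaller absolute value; this forces the count to be strictly positive, whence $g\in A^N\subseteq C_N(A,H)$. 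Since $A\in H\setminus Z(H)$ we may replace $A$ by $A\cup A^{-1}$ and thus recover $C_N$ rather than $A^N$.

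The heart of the matter is a uniform character ratio estimate: there is an absolute constant $\alpha>0$ such that for every non-trivial irreducible character $\chi$ of $H$,
\begin{equation*}
\left|\frac{\chi(g_0)}{\chi(1)}\right|\le \chi(1)^{-\alpha\,\ll_J(g_0)}.
\end{equation*}
This is exactly the content of the Liebeck--Shalev bounds, proved via Deligne--Lusztig theory together with careful estimates on centraliser dimensions of semisimple and unipotent elements in finite groups of Lie type. Combined with a Witten-type zeta estimate $\sum_{\chi\neq 1}\chi(1)^{-s}=O(1)$ valid for $s$ above an absolute threshold, one obtains
\begin{equation*}
\left|\sum_{\chi\neq 1}\frac{\chi(g_0)^N\,\overline{\chi(g)}}{\chi(1)^{N-1}}\right|\le\sum_{\chi\neq 1}\chi(1)^{\,1-\alpha N\ll_J(g_0)}.
\end{equation*}
Choosing $C$ so that $\alpha N\ll_J(g_0)\ge \alpha C$ forces the exponent to exceed the threshold, driving the right-hand side below $1$ and thus below $|A|^N/|H|$, which completes the argument.

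The main obstacle, clearly, is the character ratio bound; it is the technical core of \cite{liesha} and is not something I would attempt to reprove here. Once one accepts that input, the covering conclusion $C_N(A,H)=H$ is an essentially mechanical character-sum computation, and the uniformity in $n$ and $\F$ (which is what we need in order to pass to metric ultraproducts in Theorem \ref{PslC}) comes for free from the uniformity of the character bound.
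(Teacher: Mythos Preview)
Your assessment is correct: the paper does not prove this lemma at all. It is stated as a ``deep fact'' derived from \cite[Lemma 4.1]{liesha} (see also \cite[Lemma 6]{thmsch}) and is used as a black box. Your proposal likewise treats it as a citation, which matches the paper exactly; the additional sketch you give of the Frobenius counting / character-ratio / Witten-zeta mechanism is a faithful outline of how Liebeck--Shalev actually prove it, but it goes beyond what the paper itself does.
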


The crucial point in our argument is that, we can express the conclusion (\ref{eq:ls}) of Lemma \ref{lem:cc} as a first order sentence of a field in the language of rings $L=\{+,\cdot,0,1\}$, for fixed $n,N\in\N$. Let us explain this in more details. There exists a sentence $\Phi_{n,N}$ build from variables, symbols of 0, 1, addition $+$, multiplication $\cdot$, logical connectives $\wedge, \vee$ and quantifiers $\forall, \exists$ such that for an arbitrary field $F$ (not necessarily finite)
\begin{quote}
   $\Phi_{n,N}$ is true in $F$ (that is $F\models \Phi_{n,N}$) if and only if (\ref{eq:ls}) holds for $H=\SL_n(F)$.
\end{quote}

In order to build such $\Phi_{n,N}$, we treat a matrix of dimension $n \times n$ as a tuple of length $n^2$. Observe that
\begin{itemize}
    \item addition and multiplication of matrices can be expressed by a first order formula;
    
    \item the fact that rank of a matrix $A\in\SL_n(F)$ is greater than $k$ can be expressed by a first order quantifier free $L$-formula $\phi_{n,k}$, saying that some minor of $A$ of dimension $k \times k$ has a non-zero determinant;
    
    \item in particular, the condition $\ll_J(A)\cdot N \geq C$ can be expressed by \[\psi_{n,N}(A) = \left(\forall \lambda\neq 0\right) \phi_{n,\frac{C\cdot n}{N}}(A-\lambda I_n);\]
    
    \item for fixed $n$ and $N$, the statement (\ref{eq:ls}) can be expressed as:
\[\Phi_{n,N} = \forall A, B\ \psi_{n,N}(A) \rightarrow (\exists T_1,T_2, \ldots, T_{N})\ B = {A^{\pm1}}^{T_1}{A^{\pm1}}^{T_2}\ldots {A^{\pm1}}^{T_{N}},\]
where capital letters represent tuples of length of $n^2$, which are elements of $\SL_n(F)$.
\end{itemize}

\begin{lemma}\label{lem:field}
For all $n,N\in\N_{n>2}$, $\Phi_{n,N}$ is true in $\CC$, field of complex number. 
\end{lemma}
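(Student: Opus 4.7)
The plan is to reduce $\CC \models \Phi_{n,N}$ to Lemma \ref{lem:cc} via the Lefschetz/Ax transfer principle. The key observation is that $\Phi_{n,N}$ is a first-order sentence in the pure language of rings, so its validity in $\CC$ is controlled by its validity in the algebraic closures of the prime fields $\F_p$, and these fields are direct limits of the finite fields to which Lemma \ref{lem:cc} directly applies.

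First I would show that $\bar{\F}_p \models \Phi_{n,N}$ for every prime $p$. Given $A, B \in \SL_n(\bar{\F}_p)$ with $\bar{\F}_p \models \psi_{n,N}(A)$, I would choose $m$ large enough that the (finitely many) entries of $A$ and $B$ all lie in $\F_{p^m}$, which is possible since $\bar{\F}_p = \bigcup_{m \geq 1} \F_{p^m}$. Then $\psi_{n,N}(A)$ also holds in $\F_{p^m}$: the universal quantifier ``$\forall \lambda \neq 0$'' only gets easier to satisfy when restricted to $\F_{p^m}^* \subseteq \bar{\F}_p^*$, and the quantifier-free rank conditions are absolute between $\F_{p^m}$ and $\bar{\F}_p$ because the rank of a matrix with entries in $\F_{p^m}$ is witnessed by non-vanishing minor determinants already computed in $\F_{p^m}$. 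Lemma \ref{lem:cc} applied to $\SL_n(\F_{p^m})$ then produces $T_1, \ldots, T_N \in \SL_n(\F_{p^m}) \subseteq \SL_n(\bar{\F}_p)$ that witness the conclusion of $\Phi_{n,N}$ for $A$ and $B$.

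Second, I would invoke the Lefschetz principle: the theory $\mathrm{ACF}_0$ of algebraically closed fields of characteristic zero is complete, and a standard compactness argument shows that a first-order sentence $\sigma$ in the language of rings holds in $\CC$ if and only if it holds in $\bar{\F}_p$ for all sufficiently large primes $p$. Since by the previous step $\bar{\F}_p \models \Phi_{n,N}$ for every prime $p$, we conclude $\CC \models \Phi_{n,N}$.

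The only step requiring any genuine care is the downward transfer from $\bar{\F}_p$ to $\F_{p^m}$ for the hypothesis $\psi_{n,N}(A)$: one must check that mixing the universal quantifier over the field with the existential quantifiers over minors behaves correctly under passage to a finite subfield containing the data of $A$ and $B$. As noted, this is straightforward because everything reduces to rank conditions on concrete matrices with entries in $\F_{p^m}$, which are insensitive to the ambient algebraically closed field. Once this is in place, the remainder is a routine application of Lefschetz.
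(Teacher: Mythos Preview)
Your proof is correct and follows essentially the same approach as the paper: first reduce from $\overline{\F_p}$ to a finite subfield containing the entries of $A$ and $B$ (using that $\ll_J$ can only increase under this restriction, equivalently that the universal $\psi_{n,N}$ persists downward), apply Lemma~\ref{lem:cc} there, and then invoke Lefschetz to pass to $\CC$. The paper phrases the first step by contradiction whereas you argue directly, but the content is identical.
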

\begin{proof}
For any prime number $p$, an algebraic closure $\overline{\F_p}$ of $\F_p$ is a union of increasing family of finite fields. Sentence $\Phi_{n,N}$ is true in any finite field by Lemma \ref{lem:cc}
\begin{claim}
$\Phi_{n,N}$ is true in $\overline{\F_p}$ for any prime number $p$.
\end{claim}
If not, there are: a prime number $p$ and matrices  $A,B \in \SL_n(\overline{\F_p})$ such that $\ll_J(A)>\frac{C}{N}$ and \[B\not\in C_N(A,\SL_n(\overline{\F_p})).\] Take $F \subset \overline{\F_p}$ a finite field which contains all the coefficients  of matrices $A$ and $B$. Observe that $\Phi_{n,N}$ is not true in $F$, as $\ll_J(A)$ computed in $F$ cannot be smaller that $\ll_J(A)$ computed in $\F_p$, contradiction with Lemma \ref{lem:cc}.

Therefore $\Phi_{n,N}$ is true in $\overline{\F_p}$, for all prime $p\in \Pp$. By standard model-theoretic argument $\Phi_{n,N}$ is also true in any algebraically closed field of positive characteristic, so $\Phi_{n,k}$ is true in any algebraically closed field of characteristic 0, so in $\CC$.
\end{proof}

\begin{theorem} \label{PslC}
Let $G_n = (\SL_{m_n}(\CC),\ll_{J})$, for some $m_n\in\N_{>1}$.
Any metric ultraproduct of $(G_n)_{n\in\N}$ is a simple group. In fact, it is metrically  uniformly  simple. (Definition \ref{msf}).
\end{theorem}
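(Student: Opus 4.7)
The plan is to prove the stronger statement first: the family $\G=(G_n)_{n\in\N}$ itself is metrically uniformly simple in the sense of Definition \ref{msf}(2). Once that is done, Remark \ref{MetPrdMUS} delivers metric uniform simplicity of every metric ultraproduct $\G^*_{\text{met}}$ for free, and since $\ell_J$ takes values in $[0,1]$ the whole group coincides with $\B_1(e)$, so metric uniform simplicity immediately upgrades to simplicity.

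To verify metric uniform simplicity of the family, I fix $t>r>0$ (we may assume $t\le 1$) and choose a natural number $N$ large enough that $rN>C$, where $C$ is the constant from Lemma \ref{lem:cc}; for instance $N=\lceil C/r\rceil+1$ works. Then for any $n\in\N$ and any $A\in G_n=\SL_{m_n}(\CC)$ with $\|A\|_{\ell_J}\in(r,t]$, one has $\ell_J(A)\cdot N>rN>C$, so the hypothesis $\psi_{m_n,N}(A)$ in the sentence $\Phi_{m_n,N}$ is satisfied. Lemma \ref{lem:field} then gives $C_N(A,G_n)=G_n\supseteq \B_t(e)$. This is exactly the uniform bound demanded by Definition \ref{msf}(2), with the same $N$ working for every $n$ simultaneously.

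The passage to simplicity of $\G^*_{\text{met}}$ is then essentially free: for any nontrivial $g\in \G^*_{\text{met}}$ we have $\|g\|>0$, so setting $r=\|g\|/2$ and $t=1$, metric uniform simplicity of the ultraproduct yields some $N$ with $C_N(g,\G^*_{\text{met}})\supseteq \B_1(e)=\G^*_{\text{met}}$, whence $g$ normally generates. This gives both simplicity and, indeed, the stronger metric uniform simplicity in the conclusion of the theorem.

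The main work, a Liebeck--Shalev-type bounded-normal-generation estimate, is already packaged inside Lemma \ref{lem:cc} and has been transferred to $\CC$ via the model-theoretic compactness argument in Lemma \ref{lem:field}; so the remaining obstacle is essentially bookkeeping. The only mildly subtle point is that Lemma \ref{lem:field} is stated for $n>2$, while the theorem allows $m_n=2$. If $\{n:m_n>2\}\in\U$ this is irrelevant; otherwise one notes that the identical first-order sentence $\Phi_{2,N}$ can be transferred from $\PSL_2(\F_q)$ (for which Lemma \ref{lem:cc} still applies for $q$ sufficiently large, giving the $\SL_2$ version modulo the center, whose elements have $\ell_J=0$ and are therefore excluded by the hypothesis $\ell_J(A)>r$) to $\CC$ by the same argument as in Lemma \ref{lem:field}.
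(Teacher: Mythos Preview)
Your proof is correct and follows essentially the same route as the paper: first show that the family $\G=(G_n)_{n\in\N}$ is metrically uniformly simple by invoking Lemma~\ref{lem:field} (the transfer of Liebeck--Shalev's bound to $\CC$), then apply Remark~\ref{MetPrdMUS} to pass to the ultraproduct. The paper additionally cites Theorem~\ref{thm:SimpFam} for simplicity, whereas you derive simplicity directly from metric uniform simplicity together with the boundedness $\ell_J\le 1$; both are fine, and your argument is arguably cleaner. Your treatment of the edge case $m_n=2$ is also more explicit than the paper's, which silently restricts to $n>2$.
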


\begin{proof}
Notice that $\Phi_{n,N}$ is true in $\CC$ for any $n>2, k>0$ by Lemma \ref{lem:field}. That is, $\G=(G_n)_{n>2}$ is metrically uniformly simple (see Definition \ref{msf}), so by Theorem \ref{thm:SimpFam} and Remark \ref{MetPrdMUS}, any metric ultraproduct of $\G$ is simple, in fact  metrically  uniformly  simple.
\end{proof}

\begin{remark}
A length function $\ll_J$ is constant on cosets of subgroup $Z(\SL_n(\CC))$ for any $n>0$. We can consider $\ll_J$ as a length on $\PSL_n(\CC)$. By Lemma \ref{lem:field} a family $(\PSL_n(\CC))_{n \in \N}$  is metrically uniformly simple, and so its metric ultraproduct.
\end{remark}


\section{Direct limits} \label{sec:dir_lim}
 
Let $(I, \leq)$  be a directed set and $\G=(G_i, \|\cdot \|_i,f_{i,j})_{i\leq j \in I}$ be a \emph{direct system of metric groups}. By this we mean that for any $i\leq j \in I$ there is an isometric homomorphism $f_{i,j}\colon G_i \rightarrow G_j$ that satisfy:
\begin{quote}
\begin{enumerate}
\item $f_{i,i}$ is the identity of $G_i$,
\item $f_{i,k}=f_{j,k} \circ f_{i,j}$ for all $i\leq j\leq k$.
\end{enumerate}
\end{quote}
A \emph{direct limit} $\varinjlim\G$ of the directed system $\G=(G_i, \|\cdot \|_i,f_{i,j})_{i\leq j \in I}$ is a group defined as follows.
 Its underlying set is \[\coprod_{i\in I} G_i /\sim,\] that is, the disjoint union
$\coprod_{i\in I} G_i $ of $\{G_i\}_{i\in I}$ modulo the following equivalence relation $\sim$ defined for $g \in G_i, \ h \in G_j$ as:
\begin{quote}
$g\sim h$ if and only if there is $k>i,j$ such that $f_{i,k}(g)=f_{j,k}(h)$.
\end{quote}
A group operation in $\varinjlim\G$ is defined as usual: for $g \in G_i, \ h \in G_j$ a product is given by formula: 
\[[g]_{\sim} \cdot [h]_{\sim}= \left[f_{i,k}(g) \cdot f_{j,k}(h)\right]_{\sim}, \]
where $k>i,j$.

We can also define on $\varinjlim\G$ a natural pseudo-norm:
\[\left\|[g]_{\sim}\right\| = \|g\|_i,\text{ if }g\in G_i.\]

\begin{theorem} \label{DirLim}
Let $\G=(G_i, \|\cdot \|_i,f_{i,j})_{i\leq j \in I}$ be a direct system of metric groups. Suppose that for any reals $0<r<t$ there is $N \in \N$ such that for any $i,j \in I$ and $g \in G_i$, $h \in G_j$ with $\|g\|>r, \|h \|<t$
there is $k \in I$, $k>i,j$ such that: 
\[f_{j,k}(h) \in C_N(f_{i,k}(g), G_k). \]
Then the direct limit $G=\varinjlim \G$ is metrically uniformly simple, so its metric ultrapower $G^*_{\text{met, fin}}$ is a metrically uniformly simple group (see Definition \ref{msf} (1)).
\end{theorem}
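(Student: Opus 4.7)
The plan is first to verify that the direct limit $G=\varinjlim\G$ is itself metrically uniformly simple in the sense of Definition~\ref{msf}(1), and then to pass to the ultrapower via Remark~\ref{MetPrdMUS}. Before starting, observe that the norm $\|[g]_\sim\|=\|g\|_i$ on $G$ is well-defined and bi-invariant because each transition map $f_{i,j}$ is an isometric homomorphism, and that for every $k\in I$ the canonical map $\pi_k\colon G_k\to G$, $a\mapsto[a]_\sim$, is a group homomorphism satisfying $\pi_k\circ f_{i,k}=\pi_i$ whenever $i\leq k$.

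For the first step, fix reals $0<r<t$ and let $N\in\N$ be the integer supplied by the hypothesis of the theorem. I claim that $C_N([g]_\sim,G)\supseteq\B_t(e)$ holds for every $[g]_\sim\in G$ with $\|[g]_\sim\|\in(r,t]$. Indeed, take any such $[g]_\sim$ and any $[h]_\sim\in\B_t(e)$, and choose representatives $g\in G_i$ and $h\in G_j$, so that $\|g\|_i>r$ and $\|h\|_j<t$. The hypothesis then yields $k\in I$ with $k>i,j$, elements $t_1,\dots,t_N\in G_k$ and signs $\varepsilon_1,\dots,\varepsilon_N\in\{\pm 1\}$ such that
\[
f_{j,k}(h)=\prod_{\ell=1}^{N}\bigl(f_{i,k}(g)^{\varepsilon_\ell}\bigr)^{t_\ell}\quad\text{in }G_k.
\]
Applying the homomorphism $\pi_k$ to this identity and using $\pi_k\circ f_{i,k}=\pi_i$, $\pi_k\circ f_{j,k}=\pi_j$, gives
\[
[h]_\sim=\prod_{\ell=1}^{N}\bigl([g]_\sim^{\varepsilon_\ell}\bigr)^{[t_\ell]_\sim}\in C_N([g]_\sim,G),
\]
as required. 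Hence $G$ is metrically uniformly simple.

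For the second step, view $G$ as the constant family $\G'=(G,\|\cdot\|)_{n\in\N}$, which is then metrically uniformly simple in the sense of Definition~\ref{msf}(2) with the same $N$. By Remark~\ref{MetPrdMUS} the metric ultraproduct $\G^{\prime *}_{\text{met, fin}}=G^*_{\text{met, fin}}$ inherits metric uniform simplicity, which is the desired conclusion.

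I do not expect a serious obstacle: the argument is essentially a transcription of the hypothesis into $G$ through the fact that $\pi_k$ is a group homomorphism, and so the transfer of relations of conjugacy-length $N$ from $G_k$ down to $G$ is automatic. The only non-formal points are the well-definedness and bi-invariance of the norm on $G$, both of which are immediate from the isometric property of the $f_{i,j}$.
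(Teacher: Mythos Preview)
Your proof is correct and follows essentially the same approach as the paper: verify directly from the hypothesis that the direct limit is metrically uniformly simple by pushing the relation $f_{j,k}(h)\in C_N(f_{i,k}(g),G_k)$ through the canonical map into $G$, and then invoke Remark~\ref{MetPrdMUS} for the ultrapower. Your version is slightly more explicit (introducing $\pi_k$ and writing out the product of conjugates), but there is no substantive difference.
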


\begin{proof}
First, let us show that $G=\varinjlim \G$ is metrically uniformly simple. 
Take numbers $0<r<t$ and a natural number $N$ good for $r,t$. Let 
$[g]_{\sim},
 [h]_{\sim} \in G$ be such that $\left\|[g]_{\sim}\right\|>r$ and $\left\|[h]_{\sim}\right\|<t$. It is enough to show that
 \[h_k \in C_N(g_k, G_k)\]
 for some $k \in I$ and $g_k, h_k \in G_k$ such that $g_k \sim g$ and $h_k \sim h$.
Suppose that  $g \in G_i$ and $h \in G_j$.
Take $k \in I$ such that $k>i,j$ and let 
$g_k=f_{i,k}(g)$, $h_k=f_{j,k}(h)$.
The assumption gives that 
$f_{j,k}(h) \in C_N(f_{i,k}(g), G_k)$ so,
$h_k \in C_N(g_k, G_k)$.
So, a direct limit $G=\varinjlim \G$ is uniformly metrically simple, finally by  Remark \ref{MetPrdMUS} its metric ultrapower $G^*_{\text{met, fin}}$ is metrically uniformly simple too.
\end{proof}

\begin{example}\label{exampel}
Let us apply Theorem \ref{DirLim} to $\mathcal{SL}=(\SL_n(F),\ll_J)_{n\in\N}$, where $F$ be a finite field or the field of complex numbers and $\ll_J$ is the Jordan length (\ref{jordan}). A as result we obtain a metrically uniformly simple group $\varinjlim\mathcal{SL}$.

Consider a directed set $I=(\N_{>0}, |)$ of positive natural numbers, where $|$ is the dividing relation. For $n,m\in\N_{>0}$, with $n|m$, let 
\[f_{n,m}\colon \SL_n(F) \rightarrow \SL_m(F)\]
be defined as follows: for a matrix $A \in F^{n \times n}$,   $f_{n,m}(A)$ is a matrix of dimension $m \times m$ which has $\frac{m}{n}$ copies of $A$ along the diagonal, that is:
\[f_{n,m}(A)=\left[\begin{array}{cccc}
A &0&\ldots&0\\
0& A &\ldots&0\\
\vdots & \vdots &\ddots& \vdots \\
0&0&\ldots& A
\end{array}\right].\]
 
For any $n,m$ such that $n|m$, a function $f_{n,m}$ is an isometrical homomorphism of groups. 
For any reals $0<r<t$ take $N>C \cdot r$ where  constant $C$ is as in Lemma \ref{lem:cc}. For  $g \in G_i$, $h \in G_j$ with $\ll_J(g)>r, \ll_J(h)<t$  take the least common multiple $k=\lcm(i,j)$. We have that $\ll_J(f_{i,k}(g))>r$ and $\ll_J(f_{j,k}(h))<t$.

By Lemma \ref{lem:cc} (when $F$ is finite) and Theorem \ref{PslC} (when $F= \CC$) 
we have that $f_{i,k}(g) \in C_N(f_{j,k}(h),G_k)$.
Theorem \ref{DirLim} applied to $\mathcal{SL}$ gives that  $\varinjlim \mathcal{SL}$ is uniformly metrically simple.
\end{example}

\bibliography{sofic}
\bibliographystyle{alpha}

\end{document}